 \font\caps=cmcsc10     
 \font\Caps=cmcsc10 scaled \magstep1 
 \def\TSkip{\medskip}
 \newbox\TheTitle{\obeylines\gdef\GetTitle #1
 \ShortTitle #2
 \SubTitle #3
 \Author  #4
 \ShortAuthor #5
 \EndTitle
 {\setbox\TheTitle=\vbox{\baselineskip=20pt\let\par=\cr\obeylines%
 \halign{\centerline{\Caps##}\cr\noalign{\medskip}\cr#1\cr}}%
   \copy\TheTitle\TSkip\TSkip%
 \def\next{#2}\ifx\next\empty\gdef\STitle{#1}\else\gdef\STitle{#2}\fi%
 \def\next{#3}\ifx\next\empty%

 \else\setbox\TheTitle=\vbox{\baselineskip=20pt\let\par=\cr\obeylines%
  \halign{\centerline{\caps##} #3\cr}}\copy\TheTitle\TSkip\TSkip\fi%
  
 
 \centerline{\caps #4}\TSkip\TSkip%
 \def\next{#5}\ifx\next\empty\gdef\SAuthor{#4}\else\gdef\SAuthor{#5}\fi%
 \catcode'015=5}}
 \long\def\MSC#1\EndMSC{\def\arg{#1}\ifx\arg\empty\relax\else
  {\par\narrower\noindent%
  2000 Mathematics Subject Classification: #1\par}\fi}
 \long\def\KEY#1\EndKEY{\def\arg{#1}\ifx\arg\empty\relax\else
   {\par\narrower\noindent Keywords and Phrases: #1\par}\fi\TSkip}
 \long\def\DATE#1\EndDATE{\def\arg{#1}\ifx\arg\empty\relax\else
   {\par\narrower\noindent \center{\textit{#1}}\par}\fi\TSkip\TSkip\TSkip}
 \font\bf= cmbx10 at 10pt
 \newcommand{\Sym}{\hbox{\ptitgot S}}
 \newcommand{\Ind}{\mathrm{Ind}}
 \newcommand{\Cor}{\mathrm{Cor}}
 \renewcommand{\to}{\longrightarrow}
 \newcommand{\A}{\mathbb A}
 \newcommand{\F}{\mathbb{F}}
 \newcommand{\G}{\mathbb{G}}
 \newcommand{\N}{\mathbb{N}}
 \renewcommand{\P}{\mathbb P}
 \newcommand{\Z}{\mathbb{Z}}
 \newcommand{\Res}{\mathrm{Res}}
 \newcommand{\myvdots}{\raisebox{.006\baselineskip}{\ensuremath{\vdots}}}
 \newcommand{\EXT}{\mathrm{Ext}}
 \newcommand{\EEXT}{\mathbf{Ext}}
 \newcommand{\GL}{\mathbf{GL}}
 \newcommand{\Aut}{\mathrm{Aut}}
 \newcommand{\Ext}{\mathrm{Ext}}
\newcommand{\Gal}{\mathrm{Gal}}
 \newcommand{\Hom}{\mathrm{Hom}}
 \newcommand{\Ver}{\mathrm{Ver}}
 \newcommand{\ver} {\mathrm{ver}}
\newcommand{\Id}{\mathrm{Id}}
\renewcommand{\Im}{\mathrm{Im}}
\newcommand{\Ker}{\mathrm{Ker}}
 \newcommand{\Spec}{\mathrm{Spec}}
\newcommand{\nat}{\mathrm{nat}}
 \newcommand{\vers}{\mathrm{vers}}
\newcommand{\W}{\mathbf{W}}
\newcommand{\E}{\mathcal{E}}
\newcommand{\frob}{\mathrm{frob}}
\newcommand{\EExt}{\mathbf{Ext}}
\renewcommand{\Sym}{\mathrm{Sym}}
 \theoremstyle{plain}
 \newtheorem{thm}{Theorem}[section]
   \newtheorem*{thm*}{Theorem}
 \newtheorem{defi}[thm]{Definition}
  \newtheorem{nota}[thm]{Notation}
 \newtheorem{prop}[thm]{Proposition}
 \newtheorem{lem}[thm]{Lemma}
 \newtheorem*{theorem-non}{Bloch-Kato-Milnor conjecture, an equivalent formulation}
\newtheorem*{thmA*}{Theorem A}
\newtheorem*{defi*}{Definition}
\newtheorem*{thmB*}{Theorem B}
\newtheorem*{thmC*}{Theorem C}
\newtheorem*{thmD*}{Theorem D}
\newtheorem*{thmL*}{Theorem L}
 \theoremstyle{remark}
 \newtheorem{rem}[thm]{Remark}
 \newtheorem{qu}[thm]{Question}
 \newtheorem{ex}[thm]{Example}
 \newtheorem{exo}[thm]{Exercise}
 \newenvironment{dem}{{\bf Proof.}}{\hfill$\square$}
\newcommand{\fonction}[5]{\begin{array}{cccc}
#1: & #2 & \longrightarrow & #3 \\
 & #4 & \longmapsto & #5 \end{array}}
\newcommand{\fonctionnoname}[4]{\begin{array}{ccc}
 #1 & \longrightarrow & #2 \\
 #3 & \longmapsto & #4 \end{array}}
\date{March 2025}
\tikzstyle{vecArrow} = [thick, decoration={markings,mark=at position
   \tikzstyle{vec} = [thick, decoration={markings,mark=at position
\tikzstyle{innerWhite} = [semithick, white,line width=1.7pt, shorten >= 4.5pt]
\begin{document}
\title{Smooth profinite groups, I: geometrizing Kummer theory}

\author{Charles De Clercq and Mathieu Florence}
\subjclass[2010]{Primary: 12G05, 14L30. Secondary: 14F20, 18E30}
\address{Charles De Clercq, Équipe Topologie Alg\'ebrique, Laboratoire Analyse, G\'eom\'etrie et Applications, Sorbonne Paris Nord, 93430 Villetaneuse, France.}
\address{Mathieu Florence, Équipe topologie et géométrie algébriques, Institut de Mathématiques de Jussieu, Sorbonne Université, F-75005 Paris, France.}

\keywords{}

\begin{abstract}
In this series of three papers, we introduce and study cyclotomic pairs and smooth profinite groups. They are a  geometric axiomatisation of Kummer theory for fields, with coefficients  $p$-primary roots of unity, for a prime $p$. These coefficients are enhanced, to  $G$-linearized line bundles in Witt vectors, over $G$-schemes of characteristic $p$. In the second paper \cite{F2}, this upgrade is pushed even further, to the scheme-theoretic setting.\\
In this first article, we introduce cyclotomic pairs, smooth profinite groups and $(G,S)$-cohomology. We prove a first lifting theorem for $G$-linearized torsors under line bundles (Theorem A).

With the help of the algebro-geometric tools developed in the second article,  this formalism is applied in the third one \cite{DCF3}, to prove the Smoothness Theorem, whose essence reads as follows. Let $G$ be  profinite group. Assume that, for every open subgroup $H \subset G$, and for $n=1$, the natural arrow $H^n(H,\Z/p^2) \to H^n(H,\Z/p)$ is surjective. Then, it is also surjective for every such $H$, and every $n \geq 2$.  Applied to absolute Galois groups, the Smoothness Theorem provides a new proof of the Norm Residue Isomorphism Theorem, entirely disjoint from motivic cohomology.\\
\end{abstract}

\maketitle

\tableofcontents
\newpage
\section{Introduction.}

Let $m$ be a positive integer and let $F$ be a field where $m \neq 0$. Fix a separable closure $F_s/F$ and denote by $\mu_m$ the Galois module of $m$-th roots of unity in $F_s$. Kummer theory, in its most elementary form, states the following. Consider the  Kummer exact sequence
$$1\to \mu_m\to F_s^{\times}\overset{(\cdot)^{m}}{\to} F_s^{\times}\to 1.$$
Then, the Bockstein homomorphism, i.e. the associated boundary map $$\delta^1_{F,m}:F^{\times}\to H ^1(F,\mu_m),$$ 
is surjective by  Theorem 90, with kernel $F^{\times m}$.

For cohomology groups $H^n(F,\mu_m^{\otimes n})$ of degree $n>1$, producing a description of this kind, through tensor products of copies of $F^{\times}$ with appropriate relations, is a much more difficult problem. For this purpose, inspired by the Steinberg relations in Matsumoto's description of  $K_2(F)$, Milnor introduced in the sixties the $K$-theory groups $K^M_n(F)$. Milnor, Bass and Tate then extended the Bockstein $\delta^1_{F,m}$ above to morphisms $$h^n_{F,m}:K^M_n(F)\to H^n(F,\mu_m^{\otimes n})$$ called Galois symbol (or  norm residue map). They question whether these yield isomorphisms $$K^M_n(F)/m\overset{\sim}{\longrightarrow} H^n(F,\mu_m^{\otimes n}),$$for any field $F$ in which $m$ is invertible. This is the statement later known as the Bloch-Kato-Milnor conjecture.

A major breakthrough towards this conjecture was achieved in 1982 by Merkurjev and Suslin, who solved it for $n=2$ \cite{MS}. In 1996, Voevodsky proved the case where $m$ is a power of $2$. After tremendous efforts, a full proof of the conjecture was completed in 2008 by Rost, Suslin, Voevodsky, and Weibel \cite{HW}. The statement is now known as the Norm Residue Isomorphism Theorem. 

In this series of three articles, we are interested in studying structural properties of \emph{smooth profinite groups}, leading notably to a generalization of the following statement, equivalent to the Norm Residue Isomorphism Theorem (see \cite{Gi}, \cite{Me}).

\begin{theorem-non} \quad\\
Let $F$ be a field and let $p$ be a prime, invertible in $F$. Then, the Bockstein $$H^n(F,\mu_p^{\otimes n})\longrightarrow H^{n+1}(F,\mu_p^{\otimes n})$$
associated with the exact sequence $$1\to \mu_p^{\otimes n} \to \mu_{p^2}^{\otimes n}\to \mu_p^{\otimes n}\to 1$$
is trivial, for any positive integer $n$.\\
Equivalently, the induced map $$H^n(\Gal(F_s/F), \mu_{p^2}^{\otimes n})\to H^n(\Gal(F_s/F), \mu_p^{\otimes n})$$ is surjective.
\end{theorem-non}

A sleek aspect of this statement is that it only involves Galois cohomology: forgetting $K$-theoretic considerations, the only characters on stage are the profinite group  $\Gal(F_s/F)$ and  the Galois module $\mu_{p^2}$. Taking account of finite separable extensions $E/F$, the property readily follows replacing $\Gal(F_s/F)$ by the open subgroups $\Gal(F_s/E) \subset \Gal(F_s/F)$. This fact is the initial motivation for introducing the following notion.

\begin{defi*}(Cyclotomic pair,  Definition \ref{cyclotomicpair})\\
    Let $n,e$ be positive integers and let $G$ be a profinite group. Let $\mathcal{T}$ be  a free $\mathbb{Z}/p^{e+1}\mathbb{Z}$-module  of rank $1$, endowed with a continuous action of $G$. We say that the pair $(G,\mathcal{T})$ is $(n,e)$-cyclotomic if for any open subgroup $H\subset G$, the natural morphism $$H^n(H,\mathcal{T}^{\otimes n})\to H^n(H,(\mathcal{T}/p)^{\otimes n})$$ is surjective.
\end{defi*}

The above thus gives a direct analogy: Kummer theory implies that, setting $$\mathcal{T}:=\varprojlim_r \mu_{p^r}$$ to be the usual Tate module, the pair $(\Gal(F_s/F),\mathcal{T})$ is $(1,\infty)$-cyclotomic.

Henceforward, the above version of the Bloch-Kato-Milnor conjecture can be translated, in the broader context of cyclotomic pairs, as

\begin{center}
 \hspace{-0.5cm} ($\ast$) \hspace{0.3cm}If $(G,\mathcal{T})$ is a $(1,1)$-cyclotomic pair, then it is $(n,1)$-cyclotomic for every $n \geq 2$.
\end{center}
This statement is  the Smoothness Conjecture \cite[Conjecture 14.25]{DCFOr}, in depth $e=1$. It fleshes out the common belief that the keystone of the Norm Residue Isomorphism Theorem, is  Theorem 90 for fields.\\ 
In this work, we prove $(\ast)$, yielding   the Smoothness Theorem (Theorem B). Thus,   the above version of the Bloch-Kato-Milnor Conjecture (vanishing of higher Bockstein), is true in the context of $(1,1)$-cyclotomic pairs. This  applies to a much broader context, than Galois cohomology:  among other examples,   \'etale fundamental groups of connected curves over algebraically closed fields and of semilocal rings, fit in $(1,1)$-cyclotomic pairs \cite[§4]{DCF0} (see also \cite[Appendix]{DCF3}).

\subsection{A failed  attempt.} In a previous version of this trilogy, we  raised a bridge between   $(\ast)$, and the liftability mod $p^2$, of mod $p$ representations of $G$. For absolute Galois groups, this 
reads as follows.
\begin{qu}\label{QuGalLift} \hfill \\
   Let $F$ be a field, with separable closure $F_s$. For $d \geq 1$, consider a representation $$\rho_1:\Gal(F_s/F)\to \GL_d(\mathbb{F}_p).$$ Is there a  representation $\rho_2$, such that the diagram $$\xymatrix@R=2em {\Gal(F_s/F)\ar[r]^-{\rho_2}\ar[rd]_-{\rho_1}&\GL_d(\mathbb{Z}/p^2\mathbb{Z})\ar[d]\\&\GL_d(\mathbb{F}_p)}$$
commutes? [The vertical arrow is given by  reduction.]
\end{qu}

In  a previous \emph{incorrect} version of the second article of this trilogy, Mathieu Florence thought he  had proved the so-called \emph{Uplifting Theorem}, stating that the answer to (a much more evolved triangular form of) Question \ref{QuGalLift} be positive. In a  previous correct, but now useless version of the third paper, it is shown that this  `Uplifting Theorem' implies the Smoothness Theorem. As the upshot of efforts to disprove this approach,  Merkurjev and Scavia  \cite{MeSc} settled that the answer  to Question \ref{QuGalLift} is positive \textit{for every} $F$, if and only if $p=2$ and $d \leq 4$, or $p\geq 3$ and $d \leq 2$.\\
To prove   Theorem B, a  down-to-earth lifting process was then designed to replace the erroneous `Uplifting Theorem' : the Uplifting Pattern of \cite{F2}.

\subsection{Our pathway.}

A crucial point is to design more flexible objects. Indeed, as stated above, Kummer theory (say, for $m=p^r$) has an obvious weakness: whereas it holds for any field $F$, its coefficients are  invariably $\mu_{p^r}$. To fix this, we  `geometrize' these coefficients- upgrading them to $G$-linearized line bundles in $p$-typical Witt vectors of length $r$, on $G$-schemes characteristic $p$. Upon replacing $\mu_p$ by a $G$-linearized line bundle $L$, the analogue of  $\mu_{p^r}$ is the Teichmüller lift $\W_r(L)$ (up to a twist). Equivariant Witt vector bundles and their extensions, whose systematic study was initiated in \cite{DCFL}, thus plays a key role in our approach.

In this first article, we  prove a general lifting theorem, to be thought of as a  geometrization of classical Kummer theory- replacing $\mu_ {p}$ by an arbitrary $G$-line bundle, over a $G$-scheme $S$ of characteristic $p$. We give here its flavor.

\begin{thmA*}(Particular case; see §8 for the full statement.)\\ Let $G$ be a profinite group. Let $\Z/p^2(1)$ be a discrete $G$-module, that is a free $\Z/p^2$-module of rank one.\\
Let $S$ be a perfect affine $(G,\F_p)$-scheme. Let $L$ be a $G$-line bundle over $S$.\\ Assume that, for every open subgroup $H \subset G$, the natural arrow \[ H^1(H,\W_2(L)(1)) \to H^1(H,L(1)) \] is surjective, in the particular case $S=\Spec(\F_p)$ and $L=\F_p$. \\
Then, it is surjective for every $S,L$ and $H$ as above.
\end{thmA*}

[A pair $(G,\Z/p^2(1))$ as in the premises of Theorem A, is called $(1,1)$-cyclotomic.]\\

A word of explanation is needed, concerning the new notion of a $(n,e)$-smooth profinite group $G$, given in Definition \ref{DefiSmooth}. At its core lies  even more flexibility. We say that $G$ is  $(n,e)$-smooth if  the following holds.\\
Let $L_1$ be a $G$-linearized line bundle over a perfect affine $(G,\F_p)$-scheme $S$ and let $$c_1 \in H^n((G,S),L_1)$$  be a cohomology class. Then $c_1$ lifts to a class $$c_{e+1} \in H^n((G,S),L_{e+1}[c_1]),$$ for \textit{some} $G$-linearized invertible  $\W_{e+1}(A)$-module $L_{e+1}[c_1]$,  \emph{depending} on  $c_1$.

Smoothness is thus intrinsic to the profinite group $G$- whereas a  cyclotomic pair obviously depends on the given cyclotomic module $\Z/p^{1+e}(1)$. If $(G,\Z/p^{1+e}(1))$ is a $(n,e)$-cyclotomic pair, Theorem A implies that $G$ is $(n,e)$-smooth.\\

The goal of the second article is to  set up the Uplifting Pattern. Roughly speaking, it is  a process to lift equivariant extensions of vector bundles, to their $\W_2$-counterparts, upon  combined \emph{group-change} and \emph{base-change}.  For this to make sense, one first needs to define a suitable framework. This leads to an \textit{`integral scheme-theoretic enhancement'} of the key ideas developed in the first article. In short: in \cite{F2}, one typically works over a $p$-torsion-free base (e.g. $\Spec(\Z)$, or smooth $\Z$-schemes) rather than over perfect $\F_p$-schemes. Also, actions of finite groups, are upgraded to actions of smooth affine $\Z$-group schemes (these involve  transfer w.r.t.   truncated Witt vector ring schemes). \\The \emph{group-change} is the so-called \emph{suspension}, designed to lift Hochschild $1$-cocycles \cite[§13]{F2}. In a suitable sense, suspension provides the scheme-theoretic  analogue of the cyclotomic pairs considered in this paper. In this integral framework, \textit{liftability} w.r.t. the ring scheme $\W_2$ would be insufficient. This reflects the fact that, in Theorem A (full form), it is necessary to apply a frobenius twist prior to lifting (see Section \ref{secperfnec}). Integrally, such a twist does not exist! One way to solve this difficulty,  is to replace    $\W_2$ by  a more flexible ring scheme $\W_2^{[r]}$, depending on a given integer $r \geq 0$. It is defined over $\Z$, and built out of $\W_{r+2}$. It is specifically taylored to act as `the $r$-th Frobenius twist' of $\W_2$. \\ The \emph{base-change} is the so-called Uplifting scheme (see \cite[§12]{F2}). It is the torsor of lifts of the frobenius of (mod $p$) vector bundles. \\ The Uplifting Pattern is the key ingredient to prove the  Smoothness Theorem, in the third article (see \cite[§4]{DCF3} for the full statement).

\begin{thmB*}[Smoothness theorem] \hfill \\
Keep notation and assumption of Theorem A. The arrow \[ H^n(H,\W_2(L)(n)) \to H^n(H,L(n)) \] is then surjective, for every $S,L$,$H$, and $n \geq 1$.
\end{thmB*}

Here  is a sketch of the proof. By (the full form of) Theorem A, one may assume $S=\Spec(\F_p)$ and $L=\F_p$. Let $c \in H^n(G,\F_p(n))$ be given. The goal is to lift $c$, to some  $c_2 \in H^n(G,\Z/p^2(n))$. Via suspension, we construct a \emph{versal}  class $C_{\vers}$ (in  Hochschild cohomology of  a suitable affine smooth $\Z$-group scheme $\Gamma_{\vers}$), defined over a projective space over $\Z$. The  class  $C_{\vers}$ specialises  to $c$, upon suitable base/group-change (this  uses that $(G,\Z/p^2(1))$ is a $(1,1)$-cyclotomic pair). The Smoothness theorem  then boils down to lifting  $C_{\vers}$: specialising a  versal lift, yields the sought-for $c_2$.   For purely geometric reasons (related to lifting the mod $p$ frobenius of vector bundles), $C_{\vers}$ lifts after base-change, via the  so-called `Uplifting Scheme'.  For purely group-theoretic reasons (computations in Hochschild cohomology of linear algebraic groups), this base-change can then be undone: $C_{\vers}$ indeed lifts, prior to base-change.

\vspace{0.2cm}
We give a Leitfaden, connecting significant results of our three papers. For simplicity, we stick to Galois cohomology. Recall that the same diagram holds, replacing absolute Galois groups by $\pi_1(X)$, where $X$ is a curve over an algebraically closed field, or a semilocal $\Z[ \frac 1 p]$-scheme. \vspace{0.5cm}

\begin{center}
\begin{tikzpicture}[thick]

\node[draw,rectangle,align=center,below of=k] (a) at (2.25,-0.5) {Cyclotomic pairs and\\Smooth profinite groups};
\node[draw,rectangle,align=center] (b) at (2.25,-3.9) {Absolute Galois groups\\are $(1,\infty)$-smooth};
\node[draw,rectangle,align=center] (c) at (2.25,-6.5) {Norm Residue\\Isomorphism Theorem};
\node[draw,rectangle,align=center] (d) at (9,-1.5) {Uplifting pattern\\
(Smooth Profinite groups II)};
\node[draw,rectangle,align=center] (e) at (9,-3.9) {Smoothness Theorem\\
(Smooth Profinite groups III)};
\node[draw,rectangle,align=center] (f) at (9,-6.5) {Absolute Galois groups\\are $(n,1)$-smooth};

\node[align=center] (g) at (1.2,-2.65) {\footnotesize Theorem A};
\node[align=center] (x) at (5.42,-0.8) {\footnotesize scheme-theoretic};
\node[align=center] (y) at (5.42,-1.1) {\footnotesize enhancement};
\node[align=center] (z) at (10.8,-2.65) {\footnotesize Hochschild cohomology};

 \draw[vecArrow] (a) to (d);
 \draw[vec] (4.138,-3.9) to (6.85,-3.9);
 \draw[vecArrow] (a) to (b);
 \draw[vecArrow] (9,-2.008) to (e);
 \draw[vecArrow] (e) to (f);
 \draw[vecArrow] (f) to (c);
 \end{tikzpicture}
\end{center}

\vspace{.5cm}


For the reader's convenience, an  index is available, which tables  various notions introduced in this work.



   

\section{$G$-equivariant constructions.}
\subsection{General setting.}
Let $X$ be an object of a category $\mathcal{C}$, and $G$ be a profinite group. In this text, a \emph{naive action} of $G$ on $X$ is an action of the abstract group $G$ on $X$, whose kernel $G_0$ is an open subgroup of $G$. We denote by $G- \mathcal C$ the category whose objects are objects of $\mathcal C$, equipped with a naive action of $G$, and whose morphisms are the same as in $\mathcal C$. In $G- \mathcal C$, $\Hom$-sets are actually enriched with the structure of $G$-sets. Thus, $G$-equivariant morphisms $X\to Y$, between objects of $G- \mathcal C$, are fixed elements of the $G$-set $\Hom(X,Y)$. In short: \[ \Hom_{G-equ}(X,Y) =H^0(G,\Hom(X,Y)).\]
An object of $G- \mathcal C$ will be called a $G$-object of $\mathcal C$.

\begin{rem}
In the sequel unless specified otherwise, all actions are assumed to be naive.
\end{rem}
\subsection{$G$-linearized modules over $G$-schemes.}
In this work, all schemes are assumed to be quasi-compact and quasi-separated.   Sheaves are with respect to Zariski topology. We  restrict to `topologically well-behaved' $G$-actions, in the sense of the following Definition. 

\begin{defi}\index{$G$-scheme}
A $G$-scheme (or scheme with a $G$-action) is the data of a scheme $S$, equipped with a naive action of $G$, satisfying the property:

\hspace{1cm}\emph{(}$\ast$\emph{)} $S$ is covered by affine $G$-invariant open subschemes.

The collection of all $G$-schemes form a category $G-Sch$, with morphisms being usual morphisms of schemes.\\
A $(G,\F_p)$-scheme is a $G$-scheme of characteristic $p$.\\
If $S$ is a given $G$-scheme, a $(G,S)$-scheme is a $G$-equivariant morphism $$T \to S,$$ in $G-Sch$. It will most of the time be simply denoted by $T$.
\end{defi}

\begin{rem}
In general, $G$ may act on a scheme $S$, in such a way that $S$ is \textit{not} covered by affine $G$-invariant open subschemes. See, however, the next Exercise.
\end{rem}
\begin{exo}
Let $S$ be a scheme, separated over $\Z$, such that every finite set of points of $S$ is contained in an open affine subscheme of $S$. Show that $S$ has property $(*)$, for any naive action of $G$ on $S$.
\end{exo}

It is clear that a closed subscheme of a $G$-scheme, given by a $G$-invariant ideal, is a $G$-scheme as well. This also holds for open subschemes.
\begin{lem}
Let $S$ be a $G$-scheme. Let $U \subset G$ be a $G$-invariant open subscheme. Then, $U$ is a $G$-scheme as well.
\end{lem}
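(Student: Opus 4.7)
The plan is to show directly that $U$ inherits the covering property $(*)$ from $S$: every point $u \in U$ admits an affine, $G$-invariant open neighborhood contained in $U$. First, I would use $(*)$ for $S$ to pick an affine $G$-invariant open $W = \Spec(A) \subset S$ containing $u$; then it suffices to exhibit an affine $G$-invariant open of $U \cap W$ containing $u$. The $G$-invariant closed complement $Z := W \setminus (U \cap W)$ corresponds to a $G$-invariant (radical) ideal $I \subset A$, and any principal open $D(f)$ with $f \in I$ is automatically contained in $U \cap W$. The problem is to choose $f$ so that $D(f)$ is moreover $G$-invariant and contains $u$.

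Since the $G$-action on $S$ is naive, its kernel $G_0$ is an open subgroup of finite index, so every $G$-orbit in $\Spec(A)$ (and in $A$) is finite. Let $\{u_1,\dots,u_m\}$ be the $G$-orbit of $u$; since $U$ is $G$-invariant, each $u_i$ lies in $U \cap W$, so the prime $\p_{u_i}$ does not contain $I$. By classical prime avoidance applied to $I \not\subset \p_{u_1} \cup \cdots \cup \p_{u_m}$, there exists $f \in I$ with $f(u_i) \neq 0$ for every $i$.

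Now I symmetrize. The $G$-orbit $\{f = f_1,\ldots,f_k\}$ of $f$ in $A$ is finite, and each $f_j$ still lies in $I$ since $I$ is $G$-invariant. Set $h := f_1 f_2 \cdots f_k \in A$. Then $h$ is $G$-invariant because $G$ simply permutes the factors; it lies in $I$ because each factor does; and $h(u_i) \neq 0$ for all $i$, because each $f_j = g \cdot f$ satisfies $(g\cdot f)(u_i) = f(g^{-1} u_i) \neq 0$, the point $g^{-1} u_i$ being again an element of the orbit of $u$. Hence $D(h) = \Spec(A[h^{-1}])$ is an affine $G$-invariant open subscheme of $W$, contained in $U \cap W$, and containing $u$. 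Letting $u$ vary over $U$ produces the desired cover, verifying $(*)$ for $U$.

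The only step that requires any care is the symmetrization: one has to keep straight the fact that $G$ acts on $A$ and on $S = \Spec(A)$ in mutually dual ways, in order to confirm both that $h$ is genuinely $G$-fixed and that its values at the $u_i$ are nonzero. All remaining ingredients — prime avoidance and the finiteness of orbits forced by the naive action — are standard.
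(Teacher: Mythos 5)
Your proof is correct and follows essentially the same strategy as the paper's: reduce to an affine $G$-invariant open $\Spec(A)$, observe that the complement of $U$ is cut out by a $G$-invariant ideal $I$, find an element of $I$ that is nonzero at every point of the (finite) $G$-orbit of $u$, and then symmetrize by taking the product over the orbit to get a $G$-invariant element $h$ with $D(h)$ as the desired neighborhood. The only cosmetic difference is how the non-symmetrized element is produced: you invoke prime avoidance to get a single $f \in I$ with $f \notin \p_{u_i}$ for all $i$, while the paper builds it as a sum $a = \sum a_i$ with each $a_i$ chosen to avoid $\p_{u_i}$ and lie in all the other $\p_{u_j}$'s (a Chinese-remainder-style construction); both are standard and interchangeable here.
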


\begin{dem}
We can assume that $S=\Spec(A)$ is affine, and $G$ finite. The complement of $U$ in $S$ is given by a $G$-invariant ideal $I \subset A$. Pick a point $u \in U$. Denote by $P_1, \ldots, P_n$ the distinct prime ideals of $A$ corresponding to the $G$-orbit of $u$. For each $i=1 \ldots n$, there exists an element $a_i \in I$, not belonging to $P_i$ but belonging to all other $P_j$'s. Put $a:= \sum_1^n a_i$. Then, the principal open set $D(a)$ is contained in $U$, and contains the $G$-orbit of $u$. Denoting by $$f:= \prod_{g \in G} g\cdot a$$ the norm of $a$, we see that $D(f) \subset U$ is an affine $G$-invariant open, containing $u$. Thus, $U$ can be covered by affine $G$-invariant open subschemes.
\end{dem}

The next simple definition suffices for our purposes.
\begin{defi}
 Let $S$ be a $G$-scheme. A $G$-presheaf on $S$, with values in a category $\mathcal D$, is a contravariant functor, from the category of $G$-invariant open subsets of $S$ (where morphisms are inclusions), to $\mathcal D$. A $G$-sheaf is a $G$-presheaf, satisfying the usal sheaf axiom.\\
 In most applications,  $\mathcal D$ will actually be  $G-\mathcal C$,  where  $\mathcal C$ is a category.
\end{defi}
\begin{defi}\index{$G$-linearized $\mathcal O_S$-module}
Let $S$ be a $G$-scheme. A $G$-linearized $\mathcal O_S$-module is the data of a quasi-coherent $\mathcal O_S$-module $M$, equipped with a continuous semilinear action of $G$. In concrete terms, such an action is given by isomorphisms of $\mathcal O_S$-modules \[\phi_g: M \to (g.)^*(M), \] one for each $g \in G,$ such that the following conditions hold :

\begin{enumerate}[label=\roman*)]
\item The mapping $g \mapsto \phi_g$ is locally constant on $G$, i.e. factors through a quotient $G \to G/G_0$, by a normal open subgroup.

\item  We have \[ \phi_{gh}= (h.)^*(\phi_g) \circ \phi_h, \] for each $g,h \in G.$

\end{enumerate}

We  say $(G,\mathcal O_S)$-module, or $(G,S)$-module, instead of $G$-linearized $\mathcal O_S$-module.\\
The collection of all $(G,\mathcal O_S)$-modules form an Abelian category, monoidal through the tensor product $\otimes=\otimes_{\mathcal O_S}$. We denote it by $(G,\mathcal O_S)-Mod$.\\
If $M$ and $N$ are two $(G,\mathcal O_S)$-modules, the internal Hom of $\mathcal O_S$-modules $\underline \Hom_{\mathcal O_S}(M,N)$ is naturally a $(G,\mathcal O_S)$-module, which we denote simply by $\underline \Hom(M,N)$. We put $$M^\vee:= \underline \Hom(M,\mathcal O_S).$$
A locally free $(G,\mathcal O_S)$-module of finite constant rank as an $\mathcal O_S$-module, will be called a $G$-vector bundle on $S$.

\end{defi}

\begin{rem}
In the previous Definition, the largest open subgroup through which $g \mapsto \phi_g$ factors may be much smaller than the kernel of the action of $G$ on $S$.
\end{rem}

\begin{rem}
In short, a $(G,\mathcal O_S)$-module is the data of a quasi-coherent $\mathcal O_S$-module, equipped with a semilinear (naive) action of $G$.\\
For $G$ finite, a $G$-line bundle is a $G$-linearized line bundle over $S$, in the sense of Geometric Invariant Theory \cite[Chapter 1 §3]{Mum}.
\end{rem}

\begin{rem}
Assume that $X=\Spec(A)$ is an affine $G$-scheme. In other words, $A$ is a commutative ring, endowed with a naive action of $G$. We then  use the denomination $(G,A)$-module (resp.  $(G,A)$-bundle) for a $(G,\mathcal O_S)$-module (resp. a $(G,\mathcal O_S)$-bundle). A  $(G,A)$-module is the data of an $A$-module $M$, equipped with a semilinear action of $G$. The `semi' part of linearity means that $$g.(am)=g(a).g(m),$$ for all $g \in G,$ $a \in A$ and $m \in M$.\\  In particular, if $G$ is the absolute Galois group of a field $F$ and if $A=\F_p$, a $(G,A)$-module  is a Galois representation of the field $F$, with $\mathbb{F}_p$ coefficients.
\end{rem}
\begin{rem}
Let $G$ be a finite group, acting on a  ring $A$. Consider  two  cases.
\begin{enumerate}
    \item The $G$-action on  $A$ is trivial. Then, a $(G,A)$-module $M$ such that $M =A^d$ as an $A$-module, is the same thing, as a continuous representation $$ \rho: G \to \GL_d(A).$$ 
    
    \item The $G$-action on $\Spec(A)$ is free.  Set $B:=H^0(G,A)$. Then, $B/A$ is a $G$-Galois algebra. By  Speiser's Lemma (\cite{GS}, Lemma 2.3.8), the category of $(G,A)$-modules is equivalent to that of $B$-modules, via the assignment  
    $$\fonctionnoname{\{B-Mod\}}{\{(G,A)-Mod\}}{N}{A \otimes_B N}$$    
 with quasi-inverse 
   $$\fonctionnoname{\{(G,A)-Mod\}}{\{B-Mod\}}{M}{H^0(G,M)}.$$    
\end{enumerate}
Attempting to prove a natural property satisfied by $(G,A)$-modules for arbitrary $G$-actions on $A$, it is advisable to check it first in these two extreme cases. If it does, the property is likely to hold true in general.
\end{rem}

\begin{rem}\label{Ginvmodules}
Let $S$ be a $G$-scheme, and let $M$ be a quasi-coherent $\mathcal O_S$-module. If $M$ can be $G$-linearized (that is, $M$ can be endowed with a structure of $(G,\mathcal O_S)$-module), then  \textit{the isomorphism class} of $M$  is $G$-invariant. In other words, $M$ is isomorphic to $g^*(M)$, for all $g \in G$. Note that $G$-invariant modules are not always $G$-linearizable-- except, for instance, when $G$ is a free profinite group.
\end{rem}

\section{Recollections on Witt vectors and Witt modules.}
In this first paper, \textit{Witt vectors are considered over $\F_p$-schemes only.}\\
Let $A$ be a ring of characteristic $p$. We denote by $\W(A)$ the ring of $p$-typical Witt vectors built out of $A$. Set-wise, $\W(A)$ is simply $A^{\mathbb{N}}$ with the ring structure derived from universal Witt polynomials (see \cite{SeCL}). For a thorough exposition of  Witt modules and Witt vector bundles, see  \cite{DCFL} and \cite{DCFOr}, where an alternative construction of Witt vectors is provided, using divided powers of abelian groups.

The ring of Witt vectors $\W(A)$ is endowed with a Verschiebung (additive) morphism
$$\fonction{\Ver}{\W(A)}{\W(A)}{(a_0,a_1,a_2,...)}{(0,a_0,a_1,a_2,...)}$$

and the frobenius morphism $\frob:(a_0,a_1,...)\mapsto (a_0^p,a_1^p,...)$.

For any $r\geq 1$, denote by $\W_r(A)$ the ring of truncated Witt vectors of length $r$. We have $\W_1(A)=A$, and the ring $\W(A)$ is the projective limit of the $\W_r(A)$ through the quotient maps
$$\fonction{\pi_{r+1,r}}{\W_{r+1}(A)}{\W_r(A)}{(a_0,...,a_{r+1})}{(a_0,...,a_r)}$$
More generally, for any two integers $r\leq s$, we denote by $\pi_{s,r}$ the quotient map $$\W_s(A)\to \W_r(A).$$ We will often use the following fundamental property: the quotient $$\W(A)\to \W_1(A)=A$$ has a multiplicative section given by the Teichmüller representative $$\tau: a\mapsto (a,0,...),$$ refered to as the multiplicative (or Teichmüller) section.

Consider now a scheme $S$ of characteristic $p$, covered by affine open subschemes $\Spec(A_i)$. We denote by $\W_r(S)$ the scheme of Witt vectors of $S$ of length $n$. It is defined by gluing the affine schemes $\Spec(\W_r(A_i))$ and is a universal thickening of $S$ of order $n$, through the nilpotent closed immersions $\W_r(S) \to \W_{r+1}(S)$. In particular, the underlying topological space of $\W_r(S)$ corresponds to that of $S$.

The following definition is classical.

\begin{defi}[\cite{SeTcarp}]
Let $r\geq 1$ be an integer. The association $$U\mapsto \W_r(\mathcal{O}_S(U))$$ defines a sheaf of (commutative) rings on $S$, denoted by $\W_r(\mathcal{O}_S)$.

By definition, $\W_1(\mathcal{O}_S)$ is simply the structure sheaf $\mathcal{O}_S$ of $S$ and following the previous notations, for $s\geq r$, we denote by $$\pi_{s,r}:\W_s(\mathcal{O}_S)\to \W_r(\mathcal{O}_S)$$ the natural transformation obtained through the morphisms $$\pi_{s,r}(U):\W_s(\mathcal{O}_S(U))\to \W_r(\mathcal{O}_S(U))$$ defined above.
\end{defi}

Witt modules are meant to provide a generalization of quasi-coherent $\mathcal{O}_S$-modules, in higher $p$-primary torsion.

\begin{defi} \label{WtFaffine}
Assume that $S=\Spec(A)$ is affine. Let $r \geq 1$ be a positive integer and let $M$ be a $\W_r(A)$-module. The formula $$ U \mapsto M \otimes_{\W_r(A)} \W_r(\mathcal O_S(U))$$ defines a presheaf on $S$,  for the Zariski topology. We denote by $\tilde M$ the associated sheaf. It is a sheaf of $\W_r(\mathcal O_S)$-modules.
\end{defi}

\begin{defi}[Witt modules]\hspace{0.1cm}\\
A Witt module of height $r\geq 1$ over $S$ is a sheaf of $\W_r(\mathcal O_S)$-modules, which is locally isomorphic to a sheaf of the shape $\tilde M$ (cf. Definition \ref{WtFaffine}).\\When no reference to its height is necessary, a Witt module will simply be referred to as a $\W$-module.\\
A $\W$-module over $S$ locally isomorphic to $\W_r(\mathcal{O}_S)^d$ for some $d \geq 0$ is called a $\W_r$-bundle of rank $d$.
\end{defi}
\subsection{Reduction.}
Let $0\leq r \leq s$ be   integers. Let $\mathcal{F}$ be a sheaf of $\W_s(\mathcal{O}_S)$-modules over $S$. The reduction of $\mathcal{F}$ to  a sheaf of $\W_r(\mathcal{O}_S)$-modules, is the sheaf associated to the presheaf $$U\mapsto \mathcal{F}(U)\otimes_{\W_s(\mathcal{O}_S(U))}\W_r(\mathcal{O}_S(U)),$$ which we denote by $\mathcal{F} \otimes_{\W_s}\! \W_r $.

\subsection{frobenius.}
By functoriality, the absolute frobenius morphism $$\frob:S\to S$$ lifts to an endomorphism of $\W_r(S)$, the frobenius endomorphism of $\W_r(S)$, which we still denote by $\frob$. If $\mathcal{F}$ is a $\W_r$-module over $S$ and $m\geq 0$, we put $$\mathcal{F}^{(m)}:=(\frob^m)^{\ast}(\mathcal{F});$$ it is a $\W_r$-module over $S$. If  $\mathcal{F}$ is a $\W_r$-bundle, then $\mathcal{F}^{(m)}$ is a $\W_r$-bundle as well, of the same rank as $\mathcal{F}$.

In this paper, the frobenius pullback of a $\W_r$-module is always taken w.r.t. the absolute frobenius of the base where it is defined.

\section{$(G,M)$-torsors and Yoneda extensions.}

Let $G$ be a profinite group. Let $A$ be a $G$-group, i.e. a group equipped with a naive action of $G$. There is a well-known bijection between the set $H^1(G,A)$ and isomorphism classes of $G$-equivariant principal homogeneous spaces (torsors) of $A$. In this section, this correspondence is extended to the context of $G$-equivariant torsors under $(G,\mathcal{O}_S)$-modules. A main tool (see Prop. \ref{equaffext}) is  Yoneda's interpretation of torsors, as equivalence classes of extensions \cite{Y}.

\subsection{Extensions and operations on them.}\label{YonedaExt}

Let $S$ be a $G$-scheme, let $n \in \mathbb{N}^{\ast}$ be an integer and let $A,B$ be $(G,\mathcal O_S)$-modules over $S$. As in any additive category, we can consider the notion of  (Yoneda) $n$-extensions of $A$ by $B$, which we now recall (see \cite[§2]{DCF0}). One could work in the famework of derived categories instead.\\

As usual, set $$\Ext_{(G,\mathcal O_S)}^0(A,B):= \Hom_{(G,\mathcal O_S)}(A,B).$$
For $n \geq 1$, an $n$-extension of $A$ by $B$ is an exact sequence of $(G,\mathcal O_S)$-modules $$\E: 0 \to B \to A_1 \to \ldots \to A_{n}\to A \to 0.$$ 
The collection of all these extensions, is denoted by $\EExt_{(G,\mathcal O_S)}^n(A,B)$.

\subsection{Morphisms.} A morphism $$\E_1\to \E_2,$$ between two $n$-extensions of $A$ by $B$, is a morphism of complexes which is the identity on both $A$ and $B$. With these,  $\EExt_{(G,\mathcal O_S)}^n(A,B)$ is a category.

\subsection{Pushforwards and pullbacks.} A morphism $f:B \to B'$ induces a pushforward functor $$f_*: \EExt_{(G, \mathcal O_S)}^n(A,B) \to \EExt_{(G,\mathcal O_S)}^n(A,B').$$ Likewise, a morphism  $g:A' \to A$ induces a pullback functor $$g^*: \EExt_{(G,\mathcal O_S)}^n(A,B) \to \EEXT_{(G, \mathcal O_S)}^n(A',B).$$ The two composite functors $$f_* g^*: \EEXT_{(G,\mathcal O_S)}^n(A,B) \to \EEXT_{(G, \mathcal O_S)}^n(A',B')$$  and  $$g^* f_*: \EEXT_{(G,\mathcal O_S)}^n(A,B) \to \EEXT_{(G, \mathcal O_S)}^n(A',B')$$  are canonically isomorphic.\\
\subsection{Baer sum.}

One can add two $n$-extensions $$\E_1, \E_2 \in \EEXT_{(G,\mathcal O_S)}^n(A,B), $$ using the Baer sum.\\Denoting by $$\fonction{\delta}{A}{A \oplus A}{a}{(a,a)}$$ the diagonal, and by  $$\fonction{\alpha}{B \oplus B}{B}{(b_1,b_2)}{b_1+b_2}$$ the addition, the  formula is $$\E_1+ \E_2 := \alpha_*(\delta^*(\E_1\oplus \E_2)).$$ For this operation, the trivial $1$-extension is the direct sum $$0 \to B \to B \oplus A \to A \to 0.$$ If $n\geq 2$, the trivial $n$-extension is $$ 0 \to B \stackrel {\Id }\to B \to 0 \to \ldots \to 0\to A \stackrel {\Id }\to A \to 0.$$  Baer sum is  $\mathcal O_S$-linear, in the natural fashion.
\subsection{Change of the base.}
Let  $$h: T \to S$$ be a $G$-equivariant morphism of $G$-schemes. Let $$\E: 0 \to B \to A_1 \to \ldots \to A_{n}\to A \to 0$$ be an $n$-extension, in $ \EEXT_{(G, \mathcal O_S)}^n(A,B)$. We would like to define \[ h^*(\mathcal E) \in \EEXT_{(G,\mathcal O_T)}^n(h^*(A),h^*(B)), \]  as the usual base-change  $$h^*(\E): 0 \to h^*(B) \to h^*(A_1) \to \ldots \to h^*(A_{n})\to h^*(A) \to 0.$$  This is well-defined in  the following situations. \begin{enumerate}
    \item{The morphism $h$ is flat.}
    
    \item{Write $\mathcal E$ as the cup-product of  $1$-extensions (indexed by $i=1,\ldots,n$) \[\E_i:  0 \to E_i \to A_i \to E_{i+1} \to 0.\] Then, all $\E_i$'s are Zariski locally split  exact sequences of $\mathcal O_S$-modules. \\ If $A$, $B$ and the $A_i$'s are  vector bundles, then the $E_i$'s are vector bundles as well, so that this assumption automatically holds.}

\end{enumerate}

In practice, most base-changes will be performed in the context of item (2). In the second paper \cite{F2}, we shall work in a scheme-theoretically enhanced framework. In that upgraded setting, base-change makes sense w.r.t. arbitrary morphisms.
\subsection{Morphisms of $1$-extensions.}
Morphisms in $\EEXT_{(G, \mathcal O_S)}^1(A,B)$ are isomorphisms. Automorphisms of $1$-extensions are easily described, as follows.
\begin{lem}\label{autoext}
Let \[ \mathcal E: 0 \to B \stackrel i \to E \stackrel \pi \to A \to 0 \] be an exact sequence of $(G,\mathcal O_S)$-modules. Then, the assignment 
$$\fonctionnoname{\Hom_{(G,\mathcal O_S)} (A,B) }{\Aut_{\EEXT_{(G,\mathcal O_S)}^{1}(A,B)}(\mathcal E)}{f}{( x \in E \mapsto x+i(f(\pi(x))))}$$
is an isomorphism of abelian groups.
\end{lem}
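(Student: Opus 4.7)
The plan is to construct an explicit two-sided inverse to the given map $\Phi: f \mapsto \phi_f$, where $\phi_f(x) := x + i(f(\pi(x)))$, and to verify additivity at the end. First I would check that $\phi_f$ really lies in $\Aut_{\YEExt^1_{(G,\mathcal O_S)-Mod}(A,B)}(\mathcal E)$: it is $(G,\mathcal O_S)$-linear because $i$, $f$, $\pi$ all are, and it commutes with $i$ and $\pi$ thanks to the single identity $\pi \circ i = 0$, which gives both $\phi_f \circ i = i$ and $\pi \circ \phi_f = \pi$. That $\phi_f$ is an isomorphism is then automatic, either by the five lemma inside $(G,\mathcal O_S)-Mod$, or by the direct computation $\phi_f \circ \phi_{-f} = \Id_E = \phi_{-f} \circ \phi_f$.

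Next I would prove $\Phi$ is injective by a one-line chase: if $\phi_f = \Id_E$, then $i(f(\pi(x))) = 0$ for every local section $x$ of $E$, and since $i$ is a monomorphism and $\pi$ an epimorphism, this forces $f = 0$. For surjectivity, given any automorphism $\psi$ of $\mathcal E$, consider the $(G,\mathcal O_S)$-linear endomorphism $\psi - \Id_E$ of $E$. The compatibility $\pi \circ \psi = \pi$ gives $\pi \circ (\psi - \Id_E) = 0$, so $\psi - \Id_E$ factors uniquely through $\Ker(\pi) = i(B)$; as $i$ is a monomorphism, I get $\tilde f : E \lra B$ with $\psi - \Id_E = i \circ \tilde f$. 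The compatibility $\psi \circ i = i$ then reads $i \circ \tilde f \circ i = 0$, hence $\tilde f \circ i = 0$, so $\tilde f$ factors through the cokernel of $i$, namely $A$, providing $f: A \lra B$ with $\tilde f = f \circ \pi$; thus $\psi = \phi_f = \Phi(f)$.

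Additivity of $\Phi$ is a one-line computation: since $\pi(\phi_g(x)) = \pi(x)$, one has $\phi_f(\phi_g(x)) = \phi_g(x) + i(f(\pi(x))) = x + i((f+g)(\pi(x))) = \phi_{f+g}(x)$, so $\Phi(f+g) = \Phi(f) \circ \Phi(g)$ in the (abelian) group $\Aut(\mathcal E)$.

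There is no real obstacle: the argument is the standard identification $\Aut(\mathcal E) \cong \Hom(A,B)$ valid in any abelian category, specialized to $(G,\mathcal O_S)-Mod$. The only point requiring mild care is that each universal property invoked (factoring through $\Ker(\pi)$, factoring through the cokernel of $i$) must be taken inside $(G,\mathcal O_S)-Mod$, i.e.\ the resulting maps must be continuously semilinear for $G$; this is automatic because the kernel and cokernel in $(G,\mathcal O_S)-Mod$ are computed as in $\mathcal O_S-Mod$ endowed with the induced semilinear $G$-action, which is precisely the content of $(G,\mathcal O_S)-Mod$ being an abelian category as recorded in the preceding section.
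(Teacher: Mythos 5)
Your proof is correct and follows exactly the route the paper intends: the paper's ``proof'' is the single line ``Exercise, working for $1$-extensions in any Abelian category,'' and your argument is precisely the standard verification valid in any abelian category, carried out carefully and with the right remark at the end that the relevant kernel and cokernel factorizations land in $(G,\mathcal O_S)\text{-}Mod$.
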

\begin{dem}
Classical fact, working for $1$-extensions in any additive category.
\end{dem}

\subsection{Equivalence classes of Yoneda extensions.}
Let us say that two $n$-extensions $\E_1$ and $\E_2$ are linked, if there exists an $n$-extension $\E_3$, together with morphisms 
$$\xymatrix @!0 @R=2pc @C=3pc { \E_1 \ar[dr]&& \E_2 \ar[dl]\\ & \E_3. & }$$
Being linked is an equivalence relation (see \cite{O}, end of Section 2), compatible with  Baer sum, pullbacks, pushforwards and change of the base. 
\begin{defi}
 We denote by $\EXT_{(G,\mathcal O_S)}^n(A,B)$ the Abelian group of equivalence classes of linked Yoneda $n$-extensions, in the category $\EEXT_{(G,\mathcal O_S)}^n(A,B)$.
\end{defi}

\begin{lem}\label{cohhom}
	Assume that $A$ is a $G$-vector bundle on $S$. Then, there is a canonical isomorphism $$ \EXT_{(G,\mathcal O_S)}^n(A,B) \stackrel \sim \to \EXT_{(G,\mathcal O_S)}^n(\mathcal O_S,\underline \Hom(A,B)) .$$
\end{lem}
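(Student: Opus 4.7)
\begin{dem}[Plan]
The strategy is to exploit the fact that a $G$-vector bundle $A$ is dualizable in the monoidal category of $(G,\mathcal O_S)$-Modules. Since $A$ is Zariski-locally isomorphic to $\mathcal O_S^d$, the dual $A^\vee$ is a $G$-vector bundle of the same rank, we have a canonical $G$-equivariant isomorphism $\underline{\Hom}(A,B) \cong A^\vee \otimes B$, and there exist $G$-equivariant evaluation and coevaluation maps
\[ \mathrm{ev}: A^\vee \otimes A \lra \mathcal O_S, \qquad \eta: \mathcal O_S \lra A^\vee \otimes A, \]
the latter sending $1$ to $\Id_A$ under $A^\vee \otimes A \cong \underline{\Hom}(A,A)$. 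These satisfy the triangle identities for the self-adjunction of $A \otimes (-)$ and $A^\vee \otimes (-)$. Both of these functors are moreover exact, as $A$ and $A^\vee$ are Zariski-locally free, hence flat.

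First I would construct the forward map $\Phi$: given a Yoneda $n$-extension $\mathcal E$ of $A$ by $B$, tensor it with $A^\vee$ to obtain an $n$-extension of $A^\vee \otimes A$ by $A^\vee \otimes B = \underline{\Hom}(A,B)$, and then pull back along $\eta$ to obtain an $n$-extension
\[ \Phi(\mathcal E) := \eta^*(A^\vee \otimes \mathcal E) \in \YEExt^n_{(G,\mathcal O_S)-Mod}(\mathcal O_S, \underline{\Hom}(A,B)). \]
The candidate inverse $\Psi$ is built symmetrically: for $\mathcal F \in \YEExt^n(\mathcal O_S, \underline{\Hom}(A,B))$, tensor with $A$ and push forward along $\mathrm{ev}_B := \mathrm{ev} \otimes \Id_B: A \otimes A^\vee \otimes B \to B$, setting
\[ \Psi(\mathcal F) := (\mathrm{ev}_B)_*(A \otimes \mathcal F). \]
Each of these operations (tensoring by a flat $(G,\mathcal O_S)$-module, pullback, pushforward) is functorial and $\mathcal O_S$-linear, so $\Phi$ and $\Psi$ are compatible with morphisms of extensions, the linking relation, and Baer sum; hence they descend to group homomorphisms on $\YExt^n$.

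The core step is to show that $\Psi \circ \Phi$ and $\Phi \circ \Psi$ agree with the identity on equivalence classes. For $\Psi \circ \Phi(\mathcal E)$, because tensoring by $A$ (applied objectwise to a complex) commutes with pullback along the $G$-equivariant arrow $\eta$, and because pushforward along $\mathrm{ev}_B$ only affects the leftmost term $B$, one shows that $\Psi(\Phi(\mathcal E))$ is the image of $\mathcal E$ under the pullback along
\[ A \cong A \otimes \mathcal O_S \xrightarrow{\Id_A \otimes \eta} A \otimes A^\vee \otimes A \xrightarrow{\mathrm{ev} \otimes \Id_A} \mathcal O_S \otimes A \cong A, \]
composed with the pushforward along a canonically-matched composite on $B$. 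The triangle identity collapses both composites to the identity, and we obtain an extension linked to $\mathcal E$. The argument for $\Phi \circ \Psi = \Id$ is entirely analogous, using the other triangle identity for $A^\vee$.

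The hard part will be executing the above "commutation" cleanly at the level of Yoneda complexes. Concretely, one has to verify that for any flat $(G,\mathcal O_S)$-Module $M$ and any $G$-equivariant morphisms $g: A' \to A$, $f: B \to B'$, the natural identifications $M \otimes (g^*\mathcal E) \simeq g^*(M \otimes \mathcal E)$ and $M \otimes (f_*\mathcal E) \simeq f_*(M \otimes \mathcal E)$ hold, and that pullback and pushforward on disjoint extremities of an $n$-extension commute up to a canonical isomorphism in $\YEExt^n$. These are standard formalities in any abelian category with a symmetric monoidal exact tensor product, and once noted, the triangle identities finish the proof. Throughout, $G$-equivariance requires no additional input, since $\mathrm{ev}$, $\eta$, and all the identifications used are canonical in the monoidal category of $(G,\mathcal O_S)$-Modules.
\end{dem}
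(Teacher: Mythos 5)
Your proposal is correct, and the route you take — dualizability of the $G$-vector bundle $A$, the evaluation/coevaluation pair, and the adjunction between the exact functors $A\otimes(-)$ and $A^\vee\otimes(-) \cong \underline\Hom(A,-)$ — is the expected one; the paper's proof (by reference to \cite[Lemma 2.5]{DCF0}) is of this type as well, since for Yoneda extensions this is essentially the only way to establish tensor-hom adjunction without resolving.

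One point deserves a more careful statement than your "canonically-matched composite" phrasing. It is not the case that $\Psi(\Phi(\mathcal E))$ is literally a pullback-then-pushforward of $\mathcal E$ along endomorphisms of $A$ and $B$. Rather, after commuting $A\otimes(-)$ past $\eta^*$ and interchanging the outer pushforward and pullback, one lands on $(\Id_A\otimes\eta)^*\bigl((\mathrm{ev}_B)_*(A\otimes A^\vee\otimes\mathcal E)\bigr)$. To identify this with $\mathcal E$, you need the chain map $\mathrm{ev}\otimes\Id_\bullet : A\otimes A^\vee\otimes\mathcal E\to\mathcal E$, whose left and right ends are $\mathrm{ev}_B$ and $\mathrm{ev}_A$ respectively; the universal properties of pushout (on the left) and pullback (on the right) turn this chain map into a morphism in $\YEExt^n$, exhibiting $(\mathrm{ev}_B)_*(A\otimes A^\vee\otimes\mathcal E)$ as linked to $(\mathrm{ev}_A)^*(\mathcal E)$. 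Applying $(\Id_A\otimes\eta)^*$ and the triangle identity $\mathrm{ev}_A\circ(\Id_A\otimes\eta)=\Id_A$ then gives the linkage to $\mathcal E$. This is exactly the standard formality you gesture at, so the gap is in exposition rather than substance, but making the role of the chain map and the Yoneda linking relation explicit is what closes the argument.
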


\begin{dem}
Same proof as \cite[Lemma 2.5]{DCF0}.
\end{dem}

\subsection{$G$-affine spaces.}\label{Gaffin}

In this text, we'd like to lay emphasis on the notion of an \textit{affine space}. We first define it as a set, equipped with barycentric operations, with coefficients in a commutative ring $R$. This terminology unfortunately collides with the \textit{affine $R$-scheme} $\A_R^n$, but does not lead to serious ambiguities. Note that an \textit{affine space}, whose $R$-module of translations is free of rank $n$, is isomorphic to an \textit{affine scheme} $\A_R^n$, over $R$. The empty set qualifies as an affine space, so that the intersection of affine subspaces is always an affine subspace.\\
Following tradition, we use the word \textit{torsor} under a group $M$ to denote a nonempty set $X$, equipped with a simply transitive action of $M$. A nonempty affine space is thus a torsor under the abelian group of its translations. Conversely, a torsor over an abelian group is canonically endowed with the structure of a nonempty affine space over $\Z$.\\
Details  are routine, taking into account naive actions of a given profinite group $G$, and transposing the above set-theoretic notions to algebraic geometry.

\begin{defi}
Let $M$ be a (not necessarily abelian) $G$-group. \\
A $(G,M)$-torsor is a nonempty left $G$-set $X$, equipped with a right action of $M$, subject to the following conditions :

i) The action of $M$ on $X$ is simply transitive, i.e. the arrow 
$$\fonctionnoname{X \times M }{X \times X}{(x,m) }{(x,x. m)}$$
is bijective. \\
ii) We have \[ g(x.m)=g(x).g(m),\] for all $g \in G,$ $x \in X$ and $m \in M$.
\end{defi}

Let $S=\Spec(A)$ be an affine $G$-scheme, i.e. the ring $A$ is endowed with an action of $G$.

\begin{defi}
Let $n \geq 1$ be an integer. We denote by\[ \Delta_n(A):=\{(\alpha_1, \ldots, \alpha_n) \in A^n / \sum_{i=1}^n \alpha_i =1 \}\] the usal simplex; it is a $G$-set.
\end{defi}
\begin{defi}\index{$G$-affine space!over a ring}
A $G$-affine space over $A$ is the data of a $G$-set $X$, equipped with $G$-equivariant barycentric operations, with coefficients in $A$. \\
This means that $X$ is given with $G$-equivariant functions, one for each $n \geq 2$, \[ B_n:\Delta_n(A) \times X^n \to X, \] simply denoted by \[ ((\alpha_1, \ldots, \alpha_n), (x_1, \ldots, x_n)) \mapsto \sum \alpha_i x_i,\] satisfying the usual associativity relations, together with $B_1=\Id_X$. \\ If $G$ is trivial, we simply say "affine space over $A$" for a $G$-affine space over $A$.\\
Denote by $X^G \subset X$ the subset consisting of $G$-fixed points. It is an affine space over $A^G$.

An affine map $X\stackrel f \to X'$, between $G$-affine spaces over $A$, is the data of a map \[f: X \to X',\] compatible with the barycentric operations of $X$ and $X'$. \\ The set of such morphisms, $\Hom(X,X')$,  is a $G$-affine space, in a natural way.\\
Put $$\Hom_G(X,X'):=H^0(G,\Hom(X,X')).$$ It consists of $G$-equivariant affine maps $X \to X'$, also called affine $G$-maps. \\
The collection of $G$-affine spaces over $A$ form a category, having the $G$-sets $\Hom(\cdot ,\cdot)$ as morphisms.
\end{defi}

\begin{ex}
It is clear that $(G,A)$-modules are $G$-affine spaces over $A$, in a natural way. The $G$-invariant subset $\Delta_n(A) \subset A^n$ is stable under barycentric operations in the free $G$-module $A^n$; it is thus also a $G$-affine space over $A$.
\end{ex}

\begin{exo}
Let $X$ be a $G$-affine space over $A$.

1) Show that all barycentric operations on $X$ can be recovered from the data of $$\fonction{T}{X\times X\times X}{X}{(x,y,z)}{x+y-z}$$
together with the operations $$\fonction{t_{\alpha}}{X\times X}{X}{(x,y)}{\alpha x + (1-\alpha) y},$$ for all $\alpha \in A.$

2) Assume that there exists an element $\alpha_0 \in A,$ such that $\alpha_0$ and $1-\alpha_0$ are both invertible. Show that $T$ can be recovered from the $t_\alpha$'s, for well-chosen $\alpha$'s.
\end{exo}

\begin{defi}
Let $X$ be a nonempty $G$-affine space. An affine automorphism of the shape $$\fonctionnoname{X}{X}{x}{x+y-z}$$
for some $y,z \in X$, will be called a translation, and will be denoted by `\hspace{0.1cm}$y-z$'. \\
We denote by $\overrightarrow X \subset \Aut(X)$ the (abelian) subgroup of translations. It comes naturally equipped with the structure of a $(G,A)$-module.
\end{defi}

\begin{rem}
We have $y-z=y'-z' \in \overrightarrow X$ iff $y-z+z'=y' \in X.$
\end{rem}
\begin{lem}
 Let $X$ be an nonempty $G$-affine space over $A$. Then $X$ is naturally endowed with the structure of a $(G,\overrightarrow{X})$-torsor.\\
 Conversely, let $M$ be a $(G,A)$-module, and let $X$ be a $(G,M)$-torsor. Then, $X$ is naturally endowed with the structure of a (nonempty) $G$-affine space over $A$, having $\overrightarrow{X}=M$.
\end{lem}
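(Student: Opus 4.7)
The plan is to construct both structures explicitly and check the axioms; the only delicate point is well-definedness of the barycenter formula in the converse direction.

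For the forward direction, let $X$ be a nonempty $G$-affine space over $A$ and $\overrightarrow X$ its $(G,A)$-module of translations. Define a right action of $\overrightarrow X$ on $X$ by sending $(x, y-z)$ to $T(x,y,z) = x+y-z$, using the ternary operation $T$ from the preceding exercise. Well-definedness --- namely, that $y-z = y'-z'$ in $\overrightarrow X$ forces $x+y-z = x+y'-z'$ for every $x \in X$ --- follows from the associativity axioms of the barycentric operations, since by definition $y - z = y' - z'$ means $y - z + z' = y'$ in $X$. Simple transitivity is immediate: for $x, x' \in X$, the translation $x'-x$ is the unique element of $\overrightarrow X$ sending $x$ to $x'$. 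The $G$-equivariance of this action is read off directly from the $G$-equivariance of the barycentric operations defining $X$.

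For the converse, let $M$ be a $(G,A)$-module and $X$ a $(G,M)$-torsor. For $x, x' \in X$ write $x' - x$ for the unique $m \in M$ with $x \cdot m = x'$. For each $n \geq 2$, each $(\alpha_1, \ldots, \alpha_n) \in \Delta_n(A)$ and $(x_1, \ldots, x_n) \in X^n$, put
\[ B_n\bigl((\alpha_i),(x_i)\bigr) := x_1 \cdot \sum_{i=2}^{n} \alpha_i (x_i - x_1) \in X, \]
and $B_1 := \mathrm{Id}_X$. The main verification is independence of the choice of basepoint: replacing $x_1$ by some $x_j$, the identity $x_i - x_j = (x_i - x_1) - (x_j - x_1)$ in $M$ allows one to expand the alternative expression $x_j \cdot \sum_{i \neq j} \alpha_i (x_i - x_j)$, and the constraint $\sum_i \alpha_i = 1$ yields the precise cancellation ensuring that it coincides with the original formula.

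Once $B_n$ is well-defined, the associativity axioms for barycentric operations reduce to a mechanical computation in the $(G,A)$-module $M$ after fixing a basepoint, and $G$-equivariance follows from the $G$-equivariance of the $M$-action on $X$ together with the $(G,A)$-module structure of $M$. The assignment sending the translation ``$y - z$'' to the unique $m \in M$ with $z \cdot m = y$ is a $G$-equivariant $A$-linear isomorphism $\overrightarrow X \xrightarrow{\sim} M$, and the two constructions are mutually inverse by design. The principal obstacle is basepoint-invariance of the barycenter formula; everything else is routine bookkeeping.
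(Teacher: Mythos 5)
Your proposal is correct, and it fills in the straightforward verification that the paper dismisses with ``This is clear.'' Both directions follow the expected route: realize translations via the ternary operation $x+y-z$, and in the converse direction define barycenters by anchoring at a basepoint $x_1$ via $B_n((\alpha_i),(x_i)) = x_1\cdot\sum_{i\geq 2}\alpha_i(x_i-x_1)$, with basepoint-independence exactly as you compute (using $\sum_i\alpha_i=1$ and $x_i-x_j=(x_i-x_1)-(x_j-x_1)$). The identification $\overrightarrow X\simeq M$ and the $G$- and $A$-compatibilities then fall out as you describe.
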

 \begin{dem}
 This is clear.
 \end{dem}
 \quad\\
 
The next Lemma is an adaptation of the usual construction, in classical real affine geometry, which provides a canonical embedding of an $n$-dimensional affine space, as an affine hyperplane inside an $(n+1)$-dimensional vector space.

 \begin{lem}["Modulification" of a nonempty affine space]\label{modulify}\hspace{0.1cm}\\
 Let \[ \mathcal E: 0 \to M \to N \stackrel \pi \to A \to 0\] be an exact sequence of $(G,A)$-modules. Then $$X:= \pi^{-1}(1)$$ is a nonempty $G$-affine space over $A$, with $\overrightarrow{X}=M$.\\
 Conversely, given a nonempty $G$-affine space $X$ over $A$, there exists a canonical exact sequence of $(G,A)$-modules \[ \mathcal E(X): 0 \to \overrightarrow{X} \to E(X) \stackrel \pi \to A \to 0,\] together with a canonical isomorphism of $G$-affine spaces $$X \simeq \pi^{-1}(1).$$
\end{lem}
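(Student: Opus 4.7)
The forward direction is essentially tautological. Given the exact sequence $\mathcal{E}$, the fibre $X := \pi^{-1}(1)$ is nonempty (by surjectivity of $\pi$) and $G$-stable (by equivariance of $\pi$). The barycentric operations on $N$ restrict to $X$, since $\pi(\sum_i \alpha_i x_i) = \sum_i \alpha_i = 1$ whenever $(\alpha_i) \in \Delta_n(A)$, and they are $G$-equivariant. For $y,z \in X$ the translation $y-z \in N$ lies in $\ker \pi = M$, giving an injective $(G,A)$-linear map $\overrightarrow{X} \hookrightarrow M$; since $M$ acts simply transitively on the torsor $X$, this injection is an isomorphism.

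For the converse, the plan is to build $E(X)$ by generators and relations. Let $F := A^{(X)}$ be the free $A$-module on the set $X$, with basis $(e_x)_{x \in X}$, endowed with the natural semilinear $G$-action $g \cdot \sum_x \alpha_x e_x := \sum_x g(\alpha_x) e_{g(x)}$. The augmentation $\pi_0 : F \to A$, $\sum_x \alpha_x e_x \mapsto \sum_x \alpha_x$, is $G$-equivariant. Let $R \subset F$ be the $G$-invariant $A$-submodule generated by the elements $\sum_{i=1}^n \alpha_i e_{x_i} - e_{\sum_i \alpha_i x_i}$ for every $n \geq 1$, every $(\alpha_i) \in \Delta_n(A)$ and every $(x_i) \in X^n$; these generators lie in $\ker \pi_0$. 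Set $E(X) := F/R$; it is canonically a $(G,A)$-module, with an induced $G$-equivariant surjection $\pi : E(X) \to A$. The map $\iota : X \to E(X)$, $x \mapsto [e_x]$, is $G$-equivariant, affine by construction of $R$, and satisfies $\pi \circ \iota \equiv 1$.

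The only substantial step left is to check that $\ker \pi$ is canonically identified with $\overrightarrow{X}$ and that $\iota$ restricts to a bijection $X \sr\sim\lra \pi^{-1}(1)$. The main obstacle is to show that $R$ is "not too big"; I handle this by picking an auxiliary (not necessarily $G$-invariant) base point $x_0 \in X$, and defining $A$-linear maps $\Phi : F \to A \oplus \overrightarrow{X}$, $e_x \mapsto (1, x-x_0)$, and $\Psi : A \oplus \overrightarrow{X} \to E(X)$, $(a, y-z) \mapsto a[e_{x_0}] + [e_y] - [e_z]$. A direct verification shows that $\Phi$ kills every generator of $R$, so descends to $\bar\Phi : E(X) \to A \oplus \overrightarrow{X}$. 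Well-definedness of $\Psi$ on $\overrightarrow{X}$ amounts to showing $[e_y]-[e_z] = [e_{y'}]-[e_{z'}]$ whenever $y-z = y'-z' \in \overrightarrow{X}$, i.e. $y+z' = y'+z$ in $X$, which is exactly the relation in $R$ indexed by the coefficients $(1,1,-1) \in \Delta_3(A)$. A short computation shows $\bar\Phi$ and $\Psi$ are mutually inverse.

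It follows that $E(X)$ is free of the expected $A$-module type, that $\iota$ is a bijection onto $\pi^{-1}(1)$, and that $\Psi$ restricts to an $A$-linear isomorphism $\overrightarrow{X} \sr\sim\lra \ker \pi$. Crucially, the latter isomorphism does \emph{not} depend on the choice of $x_0$ (only the global splitting does), so it is $G$-equivariant and canonical. This produces the desired canonical exact sequence $\mathcal{E}(X) : 0 \to \overrightarrow{X} \to E(X) \to A \to 0$ of $(G,A)$-modules, together with the canonical affine $G$-isomorphism $X \simeq \pi^{-1}(1)$.
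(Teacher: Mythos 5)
Your forward direction matches the paper's (which just says "The first assertion is clear"), and your converse is correct but takes a genuinely different route. The paper constructs $E(X)$ "by hand" as a quotient of the set $X\times A\times\overrightarrow{X}$ by an explicit equivalence relation, with addition and scalar multiplication given by concrete formulas --- the class of $(x,\alpha,y-z)$ is thought of as "$\alpha x+y-z$". You instead present $E(X)$ as a quotient of the free $A$-module $A^{(X)}$ by the submodule of barycentric relations, which is the universal/generators-and-relations approach; this makes the $A$-module and $G$-module structures automatic from the start, but requires your auxiliary split $\Phi,\Psi$ through $A\oplus\overrightarrow{X}$ to verify that the quotient is not degenerate and to identify the kernel of the augmentation with $\overrightarrow{X}$. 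A small point of care that you handle correctly: the splitting $\Psi$ depends on the choice of base point $x_0$, so it is generally not $G$-equivariant, but its restriction $\overrightarrow{X}\to\ker\pi$, $(y-z)\mapsto[e_y]-[e_z]$, is base-point-free, hence canonical and $G$-equivariant --- this is exactly what is needed for $\mathcal{E}(X)$ to be an exact sequence of $(G,A)$-modules. One could also note, in either approach, that the relation used for well-definedness of $\Psi$ on $\overrightarrow{X}$ uses the barycentric coefficients $(1,1,-1)\in\Delta_3(A)$, so it relies on $\Delta_3(A)$ being the affine hyperplane $\sum\alpha_i=1$ (not the probability simplex); that is indeed the paper's convention.
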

\begin{dem}
 The first assertion is clear. The second one is less obvious. We put $$E(X):=(X \times A \times \overrightarrow X) / \sim, $$ where the equivalence relation $\sim$ is given by $$(x,\alpha, y-z) \sim (x',\alpha', y'-z')$$ if and only if $\alpha=\alpha'$ and \[ \alpha x -\alpha x' + y = y'- z' +z \in X.\]
 The class of the element $(x,\alpha, y-z)$ is then understood as "$\alpha x +y -z \in E(X)$".\\
Addition is defined by \[ (x,\alpha, y-z) + (x',\alpha', y'-z') = (x, \alpha + \alpha', \alpha' (x'-x)+ y+y'-z-z'). \]
Muliplication by scalars is given by \[ \beta.(x,\alpha, y-z):= (x, \beta \alpha, \beta(y-z)).\] The $G$-action is defined in the obvious way- as well as the extension $\mathcal E(X).$
\end{dem}

\begin{defi}[Restriction and Extension of scalars, for affine spaces]\hspace{0.1cm}\\
Let $S'=\Spec(A')$ be another affine $G$-scheme and $F:A\longrightarrow A'$ a $G$-equivariant  morphisms of rings.

i) Let $X'$ be a $G$-affine space over $A'$. We denote by $(X ')_{\vert f}$ the $G$-affine space over $A$ obtained from $X'$, using $F$ to restrict scalars.

ii) Let $X$ be a non-empty $G$-affine space over $A$. We denote by $$X \otimes_A A':=(\pi \otimes \Id_{A'})^{-1}(1)$$ the $G$-affine space over $A'$ associated to the exact sequence \[ \mathcal E(X) \otimes_A A': 0 \to \overrightarrow{X}\otimes_A A' \to E(X) \otimes_A A' \stackrel \pi \to A' \to 0.\] We thus have $\overrightarrow{X \otimes_A A'}= \overrightarrow{X} \otimes_A A'.$ If $X=\varnothing$, we set $X \otimes_A A'=\varnothing$.
\end{defi}

\begin{rem}
Extension of scalars is left adjoint to restriction of scalars, for affine maps.
\end{rem}

The previous Definitions can clearly be sheafified, in the usual fashion. We briefly explain how.

\begin{defi}
Let $S=\Spec (A)$ be an affine $G$-scheme. Let $X$ be a $G$-affine space over $A$. We denote by $\tilde X$ the $G$-sheaf on $S$ $$ U \mapsto X \otimes_A \mathcal O_S(U).$$ For each $G$-invariant open $U \subset X$, $\tilde X(U)$ is thus a $G$-affine space over $\mathcal O_S(U)$.
\end{defi}
\begin{defi}\label{WtFAFF}\index{$G$-affine space!over a scheme}
Let $S$ be a $G$-scheme. \\ A $G$-affine space over $S$ is the data of a $G$-sheaf $$\mathcal X: U \mapsto \mathcal X(U), $$with values in the category of $G$-affine spaces, such that the following holds.\\

\hspace{0.2cm}i) For all $G$-invariant open $U \subset S$, $\mathcal X(U)$ is a $G$-affine space over $\mathcal O_S(U)$.

\hspace{0.2cm}ii) For all $G$-invariant opens $V \subset U \subset S$, the morphism $$\mathcal X(\rho_{V,U}): \mathcal X(U) \to \mathcal X (V)$$ is a $G$-equivariant affine morphism, where $ \mathcal X (V)$ is considered as a $G$-affine space, via change of rings through the restriction $\rho_{V,U}: \mathcal O_S(U) \to \mathcal O_S(V) $.

\hspace{0.2cm}iii) Each $s \in S$ has an open affine $G$-invariant neighborhood $U=\Spec(A)$, such that $\mathcal X_{\vert U}$ is isomorphic to $\tilde X$, for some $G$-affine space $X$ over $A$.\\

The $G$-affine space $\mathcal X$ over $S$ is said to be everywhere nonempty, if each point $s \in S$ has a $G$-invariant open neighborhood $U$, with $\mathcal X (U) \neq \emptyset.$ In this case, there exists a unique $(G,\mathcal O_S)$-module $M$, such that $M(U)= \overrightarrow{\mathcal X(U)}$, for all $G$-invariant open subsets $U \subset S$. We denote this $M$ by $\overrightarrow{\mathcal X}$.
\end{defi}

\begin{defi}\label{defitorsG}
Let $S$ be a $G$-scheme, and let $M$ be a $(G,\mathcal O_S)$-module over $S$. A $(G,M)$-torsor (over $S$) is a $G$-affine space $\mathcal X$ over $S$, everywhere nonempty, together with an isomorphism of $(G,\mathcal O_S)$-modules $ \overrightarrow{\mathcal X} \stackrel \sim \to M.$
\end{defi}

\subsection{Twisting $1$-extensions.}

Recall that $S$ denotes  a $G$-scheme.
\begin{defi}
Let $E$ and $M$ be $(G,\mathcal O_S)$-modules. A (left) action of $M$ on $E$ is a $G$-equivariant morphism \[M \to \underline{\Aut}_{\mathcal O_S}(E) ,\] between $G$-sheaves with values in $G-\mathbf{Grp}$.
\end{defi}

\begin{ex}\label{exaact}
Let $M$ be a $(G,\mathcal O_S)$-module over $S$. Then, $M$ acts on $$E:= M \bigoplus \mathcal{O}_S,$$ by the formula (on functors of points) \[x.(y,\lambda) =(y+\lambda x,\lambda),\] for all $x,y \in M,$ and all $\lambda \in \mathcal O_S.$\\
This example deserves to be compared to  "an exponential series, truncated in degree $2$", thinking of $1+x$ as $e^x$.
\end{ex}

Let $m \geq 1$ be an integer. Let $E$ and $M$ be $(G,\mathcal O_S)$-modules over $S$. Assume given an action of $M$ on $E$. Let $P$ be a $(G,M)$-torsor over $S$. Then, one can form the twisted $(G,\mathcal O_S)$-module $E^P$, through the "usual twisting process". We briefly explain how.\\ Assume first that $S=\Spec(A)$ is affine. View $M$ and $E$ as $A$-modules, equipped with a semilinear action of $G$. We put \[ E^P:=(P \times E)/ M.\] Here, the quotient is taken with respect to the natural diagonal action of $M$, identifying $(x.m,e)$ and $(x,m.e),$ for all $e\in E$, $m \in M$ and $x \in P$. It is a set, equipped with an action of $G$, inherited from the diagonal action of $G$ on $P \times E$. Temporarily forgetting the action of $G$, it is easily shown that there is a unique structure of an $A$-module on $E^P$ such that, for any $b \in P$, the map $$\fonctionnoname{E}{E^P}{e}{\overline {(b,e)}}$$ is an isomorphism of $A$-modules.\\ We then see that the natural action of $G$ on $E^P$ occurs through semilinear automorphisms. The case $S$ arbitrary follows by gluing, using the fact that affine $G$-invariant opens of $S$ form a basis of the $G$-topology of $S$.

Twisting is functorial. More precisely, let \[f:E \to E' \] be an $M$-equivariant homomorphism between $(G,\mathcal O_S)$-modules, equipped with an action of $M$.\\ Twisting by the $(G,M)$-torsor $P$ then yields a morphism of $(G,\mathcal O_S)$-modules \[f^P:E^P \to {E'}^P. \]
The twist $E^P$ is canonically isomorphic to $E$, in each of the following cases.

\hspace{0.3cm}$i)$ The $(G,M)$-torsor $P$ is equal to $M$, the trivial torsor.

\hspace{0.3cm}$ii)$ The action of $M$ on $E$ is trivial.

We can now precisely formulate an equivalence of categories, linking $1$-extensions of $\mathcal O_S$ by $M$ to $(G,M)$-torsors. It is a sheafification of Lemma \ref{modulify}. 

\begin{prop}\label{equaffext}
Let $S$ be a $G$-scheme. Let $M$ be a $(G,\mathcal{O}_S)$-module over $S$. Let \[\mathcal E: 0 \to M \to E \stackrel \pi \to \mathcal O_S \to 0\] be an exact sequence of $(G, \mathcal O_S)$-modules. Then, the assignment \[ U \mapsto \pi^{-1}(1) \subset H^0(U,E),\] for every $G$-invariant open $U \subset S$, defines a $(G,M)$-torsor over $S$. We denote it by $X(\mathcal E)$.\\
Conversely, let $P$ be a $(G,M)$-torsor over $S$. Consider the trivial extension \[\mathcal E_0: 0 \to M \stackrel i \to M \bigoplus \mathcal O_S \stackrel \pi \to \mathcal O_S \to 0.\] Equip $M$ and $\mathcal O_S $ with the trivial action of $M$, and $M \bigoplus \mathcal O_S $ with the action of $M$ given in Example \ref{exaact}. The arrows $i$ and $\pi$ are then $M$-equivariant, and we denote by $\mathcal E(P)$ the twisted extension \[\mathcal E_0^P: 0 \to M \stackrel {i^P} \to E(P):=(M \bigoplus \mathcal O_S )^P \stackrel {\pi^P} \to \mathcal O_S \to 0.\]

The assignments \[\mathcal E \mapsto X(\mathcal E) \] and \[ P \mapsto \mathcal E(P)\] are mutually inverse equivalences of categories, from $\EExt_{(G, \mathcal O_S)}^{1}(\mathcal O_S,M)$ to the category of $(G,M)$-torsors over $S$.
\end{prop}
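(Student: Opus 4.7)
The plan is to reduce the entire statement to the affine setting $S=\Spec(A)$, using that affine $G$-invariant opens form a basis of the $G$-topology on $S$, and then to extract the set-theoretic content from Lemma \ref{modulify}. I would first check well-definedness of both assignments on objects and on morphisms, and then produce mutually inverse natural isomorphisms $X(\mathcal{E}(P)) \simeq P$ and $\mathcal{E}(X(\mathcal{E}))\simeq\mathcal{E}$. Since morphisms of $1$-extensions are automatically isomorphisms (Lemma \ref{autoext}), the claimed equivalence is really between a groupoid of extensions and the groupoid of $(G,M)$-torsors, so it is enough to match objects up to canonical isomorphism and then check naturality.

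For $\mathcal{E}\mapsto X(\mathcal{E})$: on any affine $G$-invariant open $U=\Spec(A_U)\subset S$, the sequence $\mathcal{E}(U)$ splits as a sequence of $A_U$-modules (since $A_U$ is free, hence projective, over itself), so $\pi^{-1}(1)(U)\neq\varnothing$. The action of $M(U)$ on this fiber by $m\cdot x:=x+i(m)$ is simply transitive by exactness of $\mathcal{E}$, and $G$-equivariant by the $G$-equivariance of $i$ and $\pi$. Hence $X(\mathcal{E})$ is an everywhere nonempty $G$-affine space on $S$ with $\overrightarrow{X(\mathcal{E})}=M$, i.e.\ a $(G,M)$-torsor. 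A morphism of extensions restricts to an $M$-equivariant map between fibers over $1$, yielding the desired functoriality.

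For $P\mapsto\mathcal{E}(P)$: a direct check shows that, with $M$ acting trivially on $M$ and $\mathcal{O}_S$ and via the formula of Example \ref{exaact} on $M\oplus\mathcal{O}_S$, both arrows $i$ and $\pi$ are $M$-equivariant, so the twisting construction makes sense. Locally on affine opens where $P$ trivializes, the twisted sequence coincides with $\mathcal{E}_0$ itself; exactness is therefore preserved, and the resulting $\mathcal{O}_S$-structure and semilinear $G$-action glue into an honest $1$-extension, functorially in $P$.

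Only mutual inverseness remains. In one direction, $b\mapsto\overline{(b,(0,1))}$ defines a $(G,M)$-equivariant map $P\to X(\mathcal{E}(P))$; the twist relation shows that every local section of $X(\mathcal{E}(P))$ has a unique representative of this form, so the map is bijective. In the other direction, the rule $\overline{(x,(m,\lambda))}\mapsto\lambda x+i(m)\in E$ is well-defined on the twist quotient (thanks precisely to the formula in Example \ref{exaact}), is $\mathcal{O}_S$-linear and $G$-equivariant, and fits into a morphism of extensions that is the identity on $M$ and on $\mathcal{O}_S$; the five lemma promotes it to an isomorphism. The hardest point I anticipate is to match the semilinear $G$-action carried through the twist with the one on $E$: on each trivializing affine open both are controlled by the same cocycle, namely the one measuring the difference of two local sections of $\pi$ (Lemma \ref{autoext}), so the local isomorphisms glue uniquely to a global one, completing the proof.
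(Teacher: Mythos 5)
Your proposal is correct and follows the same route as the paper: reduce to the affine case via affine $G$-invariant opens, extract the content from Lemma \ref{modulify}, and glue. The paper's proof is essentially just that one sentence, so your write-up is a useful expansion; in particular you explicitly reconcile the twisted-extension construction $\mathcal E_0^P$ in the Proposition's statement with the quotient construction $E(X)$ appearing in Lemma \ref{modulify}, a compatibility the paper leaves implicit.
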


\begin{dem}
This is done in Lemma \ref{modulify} if $S$ is affine. The general case follows by gluing.
\end{dem}
\subsection{Representability of torsors under $G$-vector bundles.}
\begin{defi}
Let $V$ be a vector bundle over a scheme $S$. We set \[ \mathbb A(V):=\Spec(\Sym_{\mathcal O_S}(V^\vee)) \to S.\] It is the affine space associated to $V$. It represents the functor of points of $V$: for each morphism of schemes $T \to S$, we have \[ \mathbb A(V)(T)=H^0(T, V \otimes_{\mathcal O_S} \mathcal O_T). \]
\end{defi}

\begin{defi}
 Let \[\mathcal E: 0 \to V \stackrel i \to E \stackrel \pi \to \mathcal O_S \to 0\] be an extension of vector bundles, over a scheme $S$. \\ We denote its dual extension by \[\mathcal E^\vee: 0 \to \mathcal O_S \stackrel{\pi^\vee} \to E^\vee \stackrel {i^\vee} \to  V^\vee \to 0.\]  For $n \geq 1$, we then define the $n$-th symmetric power of $\mathcal E^\vee$ as   \[\Sym^{n}(\mathcal E^\vee): 0 \to \Sym^{n-1}( E^\vee ) \stackrel {\times  \pi^\vee} \to  \Sym^n( E^\vee ) \stackrel {\Sym^n(i^\vee)} \to \Sym^n( V^\vee )\to 0.\]  
\end{defi}

\begin{rem}
The  extension $\Sym^{n}(\mathcal E^\vee)$ as above, is the global version of the following local construction. For a commutative ring $A$, denote by  $A[X_0, X_1, \ldots, X_d]_{n}$ the space of polynomials with coefficients in $A$, in $d+1$ variables, homogeneous of degree $n$.   Then, we have an exact sequence of free $A$-modules \[ 0 \to A[X_0, X_1, \ldots, X_d]_{n-1} \stackrel {\times X_0} \longrightarrow  A[X_0, X_1, \ldots, X_d]_{n} \stackrel {X_0 = 0} \to  A[ X_1, \ldots, X_d]_{n} \to 0.\] Indeed, this is the particular case where $S=\Spec(A)$, $E=\mathcal O_S^{d+1}$, and $\pi$ is the projection on the first factor.
\end{rem}

The next Lemma  is a key tool in \cite{F2}.

\begin{prop}\label{GAffRep1}
Let $V$ be a $G$-vector bundle over a $G$-scheme $S$. Let $X$ be a $(G,V)$-torsor over $S$. Then, $X$ is represented by a $G$-scheme, affine over $S$.\\Slightly abusing notation, we still denote this $G$-scheme by $X \to S$.

If $X$ corresponds to an extension (of $G$-vector bundles over $S$) \[\mathcal E: 0 \to V \stackrel i \to E \stackrel \pi \to \mathcal O_S \to 0,\] then this $(G,S)$-scheme is the scheme of sections of $\pi$. \\It is an affine subspace of $\mathbb A(E )$, having $\mathbb A(V )$ as its space of translations. As such, it is the $\Spec$ of the filtered $(G,\mathcal O_S)$-Algebra \[\lim\limits_{\longrightarrow}(\Sym^{n}(E^\vee)), \] where the limit is taken with respect to the injections of the natural exact sequences \[\Sym^{n}(\mathcal E^\vee): 0 \to \Sym^{n-1}(E^\vee) \stackrel { \times \pi^\vee} \to \Sym^{n}(E^\vee) \stackrel {\Sym^{n}(i^\vee)}\to \Sym^{n+1}(V^\vee) \to 0.\] 
\end{prop}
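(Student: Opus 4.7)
The plan is to first invoke Proposition~\ref{equaffext} to replace the abstract $(G,V)$-torsor $X$ by an equivalent extension $\E: 0 \lra V \lra E \stackrel{\pi}{\lra} \mathcal{O}_S \lra 0$, so that $X$ agrees with the $G$-sheaf $X(\E): U \mapsto \pi^{-1}(1) \subset H^0(U,E)$. All work then reduces to representing this last sheaf by an affine $(G,S)$-scheme of the announced shape.

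First I would introduce the affine $(G,S)$-scheme $\A(E) = \Spec(\Sym_{\mathcal{O}_S}(E^\vee))$ and the induced $G$-equivariant $S$-morphism $\A(\pi): \A(E) \lra \A(\mathcal{O}_S) = \A^1_S$. Since $\A(E)$ represents the functor $T \mapsto H^0(T, E \otimes_{\mathcal{O}_S} \mathcal{O}_T)$ and $\A(\pi)$ acts on $T$-points as $\pi \otimes \mathrm{id}$, the scheme-theoretic fiber of $\A(\pi)$ over the unit section $1: S \lra \A^1_S$ represents precisely $X(\E)$. This fiber is closed in $\A(E)$, hence affine over $S$; the translation action of $\A(V)$ on $\A(E)$ preserves it and acts simply transitively on $T$-points, so it is the announced affine subspace of $\A(E)$ with translations $\A(V)$. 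The $G$-action is inherited from the $G$-equivariance of $\E$.

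Next I would compute its coordinate algebra. Pulling back the coordinate $t$ of $\A^1_S = \Spec(\mathcal{O}_S[t])$ sends it to the element $\pi^\vee \in E^\vee \subset \Sym^1(E^\vee)$, so $\mathcal{O}(X) = \Sym(E^\vee)/(\pi^\vee - 1)$. The key algebraic step is to identify this quotient with the colimit $\varinjlim_n \Sym^n(E^\vee)$, the transitions being multiplication by $\pi^\vee$. Viewing $\Sym(E^\vee)$ as a $\Z$-graded ring with $\deg(\pi^\vee) = 1$, the dehomogenization $\pi^\vee \mapsto 1$ identifies $\Sym(E^\vee)/(\pi^\vee-1)$ with the degree-zero part of the localization $\Sym(E^\vee)[(\pi^\vee)^{-1}]$, and this degree-zero part is tautologically the stated colimit, the element $a \in \Sym^n(E^\vee)$ corresponding to $a/(\pi^\vee)^n$. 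The filtration in the statement is the natural one induced by degree, and the short exact sequences $\Sym^n(\E^\vee)$ precisely give that the transition maps in the colimit are injections.

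The hardest part will be this algebraic identification, together with the compatibility of the transitions with the sequences $\Sym^n(\E^\vee)$; both are however routine matters of graded commutative algebra. The remaining checks---$G$-equivariance throughout, and gluing from the affine case over $S$---are formal, being immediate from the functoriality of $\Sym^n$, the $G$-equivariance of $\E$, and the fact that affine $G$-invariant opens form a basis of the $G$-topology of $S$.
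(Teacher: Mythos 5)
Your proof is correct and takes essentially the same approach as the paper, which simply records the observation that $X \subset \mathbb A(E)$ is the closed subscheme cut out by $\pi^\vee = 1$; you have merely spelled out the dehomogenization argument identifying $\Sym(E^\vee)/(\pi^\vee-1)$ with the degree-zero part of $\Sym(E^\vee)[(\pi^\vee)^{-1}]$, i.e.\ the stated filtered colimit, which the paper leaves implicit.
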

\begin{dem}
This follow from the observation that $$X \subset \mathbb A(E )$$  is the closed subscheme given by the single affine equation \[ \pi^\vee=1.\]\end{dem}

\section{Recollections on $(G,\W_r)$ modules, $(G,\W_r)$-affine spaces and $(G,S)$-cohomology.}\label{GWtFMod}
We shall need the $G$-equivariant version of  a $\W_r$-module (see \cite{DCFL}). Teichm\"uller lifts of line bundles will also play a decisive role. For the convenience of the reader, these notions are recalled below. They give rise to a bunch of algebro-geometric structures, over  Witt vectors $\W_r$. We mainly focus in  finite depth $r$. Most structures, in depth $r=\infty$  (i.e. over $\W_{\infty}=\W$), are simply `compatible structures over $\W_r$, for all $r \geq 1$', via the following elementary construction.

For each integer $r\geq 1$, let $\mathcal S_r$ be a category, consisting of algebro-geometric structures over $\W_r$. For instance, $\mathcal S_r$ may be $(G,\W_r)$-affine spaces over a given $(G,\F_p)$-scheme $S$, or Yoneda $n$-extensions of $(G,\W_r)$-bundles over $S$. Assume given natural reduction arrows (functors) $\rho_r: \mathcal S_r \to \mathcal S_{r-1}$. This is the case in the previous examples. Then, we define a category $$\mathcal S_{\infty}=\varprojlim \mathcal S_r$$ as follows. An object of $\mathcal S_{\infty}$ is, by definition, the data of an object $X_r \in \mathcal S_r$ for all $r \geq 1$, together with compatibility isomorphisms $$\phi_r: \rho_r(X_r) \stackrel \sim \to X_{r-1},$$ for all $r \geq 2$. An arrow $$(X_r,\phi_r) \to (X'_r,\phi'_r) $$ is a collection of arrows $f_r: X_r \to X'_r$, with the obvious commutation conditions. \\ A concrete instance of this general construction appears in Definition \ref{DefiLift}, with $$\mathcal S_r=\{(G,\W_r)-\mathrm{bundles \hspace{4pt} over \hspace{4pt} }S \},$$ where $S$ is a $(G,\F_p)$-scheme $S$.

\begin{defi}[$(G,\W_r)$-module, $(G,\W_r)$-bundle, $(G,\W_r)$-affine space and $(G,M)$-torsor over $S$]\hspace{0.1cm}\\
Let $S$ be a $(G,\F_p)$-scheme. Pick $r \in  \N_{\ast} \cup \{ \infty\}$. Recall that $\W_r(S)$ is a $G$-scheme, equipped with its frobenius \[ \frob:\W_r(S) \to \W_r(S). \] A $(G,\W_r)$-module $\mathcal{M}$ over $S$ is a $\W_r(\mathcal{O}_S)$-module, equipped with a semi-linear action of $G$.\\
If $\mathcal{M}$ is locally free of finite rank as a $\W_r$-bundle, we say that $\mathcal{M}$ is a $(G,\W_r)$-bundle over $S$.\\
In case mentioning $r$ is superfluous, a $(G,\W_r)$-module over $S$ is simply referred to as a $(G, \W)$ module over $S$.

Similarly, a $(G,\W_r)$-affine space over $S$ is, by definition, a $G$-affine space over $\W_r(S)$. If $\mathcal{M}$ is a $(G,\W_r)$-module over $S$, a $(G,\mathcal{M})$-torsor is defined as in \ref{defitorsG}, where $\mathcal{M}$ is viewed as a $(G,\mathcal O_{\W_r(S)})$-module.
\end{defi}

If $\mathcal{M}$ is a $(G,\W_r)$-module over $S$, Proposition \ref{equaffext} implies that the category of $(G,\mathcal{M})$-torsors is equivalent to the category $$\EExt_{(G, \W_r(\mathcal O_S))}^{1}(\W_r(\mathcal O_S),\mathcal{M}).$$

\begin{defi}($(G,S)$-cohomology)\\
Let $S$ be a $(G,\F_p)$-scheme, and let $r \in  \N_{\ast} \cup \{ \infty\}$.\\ Let $\mathcal M$ be a $(G,\W_r)$-module over $S$.\\
For $n\geq 0$, we set $$H^n((G,S),\mathcal M):=\EXT_{(G, \W_r(\mathcal O_S))}^{n}(\W_r(\mathcal O_S),\mathcal{M}).$$ In particular, $H^1((G,S),\mathcal M)$ is the abelian group formed by isomorphism classes of $(G,\mathcal{M})$-torsors over $S$.\\
\end{defi}

\begin{defi}[Lifting  $(G,\W_r)$-bundles]\hspace{0.1cm} \label{DefiLift}\\
Let $\mathcal{M}_r$ be a $(G,\W_r)$-bundle over $S$. Pick an integer $s \geq r$.\\A lifting of  $\mathcal{M}_r$  to a $(G,\W_s)$-bundle, is   a $(G,\W_s)$-bundle $\mathcal{M}_s$ over $S$, together with an isomorphism  of $(G,\W_r)$-bundles $$\mathcal{M}_s \otimes_{\W_s}\W_r \stackrel \sim \to \mathcal{M}_r.$$
If  specifying an isomorphism is not necessary, we simply say that  $\mathcal{M}_r$ lifts to  $\mathcal{M}_s$.\\
We say that $\mathcal{M}_r$ lifts completely if $\mathcal{M}_r$ admits a compatible system of liftings, i.e. for every $s>r$, a $(G,\W_s)$-bundle $\mathcal{M}_s$ is given,  together with isomorphisms $$\mathcal{M}_{s+1}\otimes_{\W_{s+1}}\W_s \stackrel \sim \to  \mathcal{M}_{s}.$$
\end{defi}

\subsection{Teichmüller lift  of  a line bundle.}

The multiplicative section for Witt vectors provides a compatible system of liftings, for $G$-line bundles over $S$. Its main properties are gathered in the next Proposition, taken from \cite[§4]{DCFL}.

\begin{prop}\label{TeichWitt}
Let $S$ be a $(G,\F_p)$-scheme. Let $L$ be a $G$-line bundle over $S$.\\
For any $r \geq 1$, there exists a canonical lift of $L$ to a $(G,\W_r)$-line bundle over $S$.

It is the $r$-th Teichm\"uller lift of $L$, denoted by $\W_r(L)$. Teichm\"uller lifts of $L$ are compatible, in the following sense.

1) We have $\W_1(L)=L$.\\
2) For all $s \geq r \geq 1$, we have a natural exact sequence (of $G$-$\W$ modules over $S$) \[ 0 \to (\frob^r)_*(\W_{s-r}(L^{\otimes p^r})) \to \W_{s}(L) \stackrel {\pi_{s,r,L}} \to \W_{r}(L) \to 0.\]

Furthermore, the surjection $\pi_{s,r,L}$ admits a canonical (non-linear, sheaf-theoretic, $G$-equivariant) section- its Teichm\"uller section. We denote it by $\tau_{s,r,L}$, or simply by $\tau_L$. It is obtained by twisting the usual Teichm\"uller section, by the $\G_m$-torsor associated to $L$.
\end{prop}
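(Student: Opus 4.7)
The plan is to reduce everything to a local computation, using Zariski trivializations of $L$, and then to use the multiplicativity of the Teichmüller section together with the standard short exact sequence for truncated Witt vectors.

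First I would construct $\W_r(L)$ by gluing. Pick an affine $G$-invariant open cover $\{U_i\}$ of $S$ on which $L$ trivializes, and let $g_{ij} \in \mathcal{O}_S^{\times}(U_{ij})$ be the resulting cocycle. The multiplicative Teichm\"uller map $\tau \colon \mathcal{O}_S^{\times} \lra \W_r(\mathcal{O}_S)^{\times}$ is a morphism of sheaves of monoids, hence $(\tau(g_{ij}))$ is again a cocycle, defining a $\W_r$-line bundle $\W_r(L)$ on $S$. Since $\tau$ is functorial in the ring, it intertwines the $G$-action on $\mathcal{O}_S^\times$ with that on $\W_r(\mathcal{O}_S)^\times$, so the $G$-linearization of $L$ lifts canonically to a $G$-linearization of $\W_r(L)$. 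Assertion (1) is immediate: $\W_1 = \mathrm{id}$, so $\tau = \mathrm{id}$ on cocycles.

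Next I would establish the exact sequence in (2). The classical Verschiebung filtration for truncated Witt vectors yields, locally on $S$, a short exact sequence
\[ 0 \lra \W_{s-r}(\mathcal{O}_S) \stackrel{\Ver^r}{\lra} \W_s(\mathcal{O}_S) \stackrel{\pi_{s,r}}{\lra} \W_r(\mathcal{O}_S) \lra 0, \]
the kernel being an ideal on which multiplication factors through Frobenius, i.e.\ $a \cdot \Ver^r(b) = \Ver^r(\Frob^r(a) \cdot b)$ for $a \in \W_s(\mathcal{O}_S)$ and $b \in \W_{s-r}(\mathcal{O}_S)$. Tensoring this sequence with $\W_r(L)$ (which is locally trivial, so the sequence stays exact) and using the above identity to identify the $\W_r$-module structure on the kernel, one sees that the cocycles $\tau(g_{ij})$ act on the kernel via their $p^r$-th Frobenius, that is through $\tau(g_{ij}^{p^r})$. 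This is exactly the cocycle defining $\W_{s-r}(L^{\otimes p^r})$ pushed forward along $\Frob^r$. Gluing therefore yields
\[ 0 \lra (\Frob^r)_{*}\W_{s-r}(L^{\otimes p^r}) \lra \W_s(L) \stackrel{\pi_{s,r,L}}{\lra} \W_r(L) \lra 0, \]
which is $G$-equivariant because each step was.

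For the Teichm\"uller section $\tau_{s,r,L}$, the point is that the ordinary Teichm\"uller section $\tau_{s,r} \colon \W_r(\mathcal{O}_S) \lra \W_s(\mathcal{O}_S)$ is multiplicative and $G$-equivariant, and \emph{commutes with the transition cocycles we used}, precisely because those transition cocycles are themselves Teichm\"uller lifts $\tau(g_{ij})$. Hence the local non-linear sections $\tau_{s,r}$ glue into a global $G$-equivariant sheaf-theoretic section of $\pi_{s,r,L}$; interpreted via the $\G_m$-torsor $L^{\times}$, this is exactly the twist of $\tau_{s,r}$ by $L$. Finally, compatibility of the lifts as $r$ varies follows from $\pi_{s,r} \circ \pi_{t,s} = \pi_{t,r}$ at the level of transition cocycles.

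The main technical point is the identification of the kernel of $\pi_{s,r,L}$ with $(\Frob^r)_{*}\W_{s-r}(L^{\otimes p^r})$: one must carefully track how Verschiebung interacts with Teichm\"uller cocycles and verify that the resulting twist is $L^{\otimes p^r}$ (and not some other Frobenius-twisted avatar of $L$). Everything else is either a direct gluing argument or a functoriality check, once one accepts the well-known structure of the ring $\W_r$ in characteristic $p$.
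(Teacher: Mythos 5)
The paper does not give its own proof here: it cites \cite[§3]{DCFL} for all of Proposition \ref{TeichWitt}, so there is no internal argument to compare against. Your approach (glue $\W_r(L)$ out of Teichm\"uller lifts $\tau_r(g_{ij})$ of the transition cocycle, identify the kernel via the Verschiebung filtration and the identity $a\cdot \Ver^r(b)=\Ver^r(\Frob^r(a)\cdot b)$, and observe the Frobenius twist on the $\W_s$-module structure) is the natural one and, as far as I can see, is what the cited reference does. Two points need fixing, however.

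First, a slip: you "tensor the Verschiebung sequence with $\W_r(L)$", but $\W_r(L)$ is only a $\W_r(\mathcal O_S)$-module; you should tensor over $\W_s(\mathcal O_S)$ with $\W_s(L)$ (locally free, hence the sequence stays exact), and then identify the three terms. The cocycle computation you give afterwards shows the left term is $(\Frob^r)_*\W_{s-r}(L^{\otimes p^r})$, so the conclusion is right even if the sentence is not.

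Second, and more seriously in terms of justification: the "usual" section $\tau_{s,r}\colon \W_r(\mathcal O_S)\to\W_s(\mathcal O_S)$, $(a_0,\dots,a_{r-1})\mapsto(a_0,\dots,a_{r-1},0,\dots,0)$, is \emph{not} multiplicative for $r>1$. For instance, with $p=2$, $r=2$, $s=3$, $A=\F_2$, one has $\W_2(\F_2)=\Z/4$, $\W_3(\F_2)=\Z/8$, and $(0,1)=2$ satisfies $(0,1)^2=0$ in $\Z/4$ while $\tau_{3,2}(0,1)^2=4\neq 0=\tau_{3,2}(0)$ in $\Z/8$. So your stated reason for why the local sections glue does not hold. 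What does hold, and is all you need, is that $\tau_{s,r}$ commutes with multiplication by \emph{Teichm\"uller units}: since $\tau_s(g)\cdot(a_0,\dots,a_{s-1})=(ga_0,g^pa_1,\dots,g^{p^{s-1}}a_{s-1})$, one checks directly that $\tau_{s,r}(\tau_r(g)\cdot m)=\tau_s(g)\cdot\tau_{s,r}(m)$. Because the transition cocycle of $\W_r(L)$ consists precisely of such Teichm\"uller units, the local sections still glue to a global $G$-equivariant sheaf-theoretic section. Replace the multiplicativity claim by this computation and the argument is complete.
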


\section{Cyclotomic pairs and smooth profinite groups.}\label{CyclotomicSmooth}

\subsection{$(n,e)$-cyclotomic pairs.}
Set $$\Z/p^{\infty}\Z:=\Z_p.$$ We endow $\Z_p$-modules of finite-type  with the $p$-adic topology.

\begin{defi}
Let $G$ be a profinite group. Let $e\in \mathbb{N}^{\ast}\cup\{ \infty\}$ be a number.\\
A $(\Z/p^e\Z,G)$-module $\mathcal{M}$ is a $\Z/p^e\Z$-module of finite type, endowed with a continuous action of $G$. (In case $e < \infty$, the action is thus naive.)
\end{defi}
 For an integer $1\leq f\leq e$ and a $(\Z/p^e\Z,G)$-module $\mathcal{M}$, we put $$ \mathcal{M}/p^f := \mathcal{M} \otimes_{\Z_p}(\Z/p^f\Z),$$ and we denote the quotient map by $$\pi_{e,f}:\mathcal{M}\to \mathcal{M}/p^f.$$ 
\begin{defi}\label{cyclotomicpair}
Let $n\geq 1$ and $e\in \mathbb{N}^{\ast}\cup\{ \infty\}$. Let $\mathcal{T}$ be a $(\Z/p^{e+1}\Z,G)$-module, free of rank one as a $\Z/p^{e+1}\Z$-module. We say that the pair $(G,\mathcal{T})$ is $(n,e)$-cyclotomic if, for every open subgroup $H\in G$, the morphism $$H^n(H,\mathcal{T}^{\otimes n}) \to H^n(H,(\mathcal{T}/p)^{\otimes n}),$$
induced by $\pi_{e+1,1}$, is surjective. The integer $e$ is then called the depth of the cyclotomic pair.
\end{defi}

\begin{rem}
By a limit argument, "open" may be replaced by "closed" in the preceding definition.
\end{rem}

\begin{rem}
Let $\mathcal{T}$ be a $(\Z/p^{e+1}\Z,G)$-module, free of rank one as a $\Z/p^{e+1}\Z$-module. Let $G_1 \subset G$ be an open subgroup of prime-to-$p$ index. Then, the pair $(G,\mathcal{T})$ is $(n,e)$-cyclotomic if, and only if, the pair $(G_1,\mathcal{T})$ is $(n,e)$-cyclotomic, by a usual restriction/corestriction argument. \\
In particular, we may take $G_1$ to be the kernel of the multiplicative character $$\chi_1: G \to \F_p ^\times,$$ giving the action of $G$ on $\mathcal{T}/p$. By doing so, many problems can be reduced to the case where $\mathcal{T}/p \simeq \F_p$ is endowed with the trivial action of $G$.
\end{rem}
\begin{rem}
Let $(G,\mathcal{T})$ be an $(n,e)$-cyclotomic pair. Then, for every integer $f$ such that $1 \leq f <e+1$ and for every open subgroup $H\in G$, the arrow $$H^n(H,\mathcal{T}^{\otimes n}) \to H^n(H,(\mathcal{T}/p^f)^{\otimes n})$$ is surjective. The proof is by induction on $f$, using the exact sequences \[ 0 \to \mathcal{T}/p^f \stackrel {\times p} \to \mathcal{T}/p^{f+1} \to \mathcal{T}/p\to 0. \]
\end{rem}

If $(G,\mathcal{T})$ is a $(n,e)$-cyclotomic pair, then $\mathcal{T}$ is given by a continuous character $$\chi: G \to (\Z/p^{e+1}\Z)^\times,$$ 
 analoguous to the  cyclotomic character in number theory. For $i\geq 1$, set $$\Z/p^{e+1}\Z\hspace{0.05cm}(i):= \mathcal{T}^{\otimes^i_{\Z_p}},$$ and for any $\Z/p^{e+1}\Z$-module $\mathcal{M}$, we put $$\mathcal{M}(i):=\mathcal{M}\otimes_{\Z_p} \Z/p^{e+1}\Z\hspace{0.05cm}(i).$$
These are called `cyclotomic twists'.
\begin{ex}
Let $F$ be a field of characteristic not $p$. Let $G=Gal(F_{sep}/F)$ be the Galois group of a separable closure $F_s/F$. Let $$ \mu:=\varprojlim_r \mu_{p^r}$$ be the Tate module of roots of unity of $p$-primary order. It is a free $\Z_p$-module of rank one, endowed with a continuous action of $G$. Kummer theory states that the pair $(G,\mu)$ is $(1,\infty)$-cyclotomic. As explained in the introduction, the statement of the Bloch-Kato-Milnor conjecture is equivalent to  $(G,\mu)$ being $(n,1)$-cyclotomic, for every $n \geq 1$. Other fundamental examples of cyclotomic pairs are given in \cite{DCF0}.
\end{ex}

We conclude this section with an instructive  exercise.

\begin{exo}\label{exopurep}
Assume that $p=2$. The goal is to present a group-theoretic version of the famous identity \[ (x) \cup (x)=(-1) \cup (x) \in H^2(F, \Z/2),\] valid for every $x \in F^\times$, with $F$ a field of characteristic not $2$.

\begin{enumerate}
    \item Let $G$ be a profinite group. Let $$\chi: G \to \{1,-1\} (\simeq \F_2)$$ be a character of $G$. Set $\Z/4 (\chi):=\Z/4$, on which $G$ act via $\chi$. Let\[\mathcal E_1: 0 \to \Z/2 \to  E_1  \to \Z/2 \to 0\] be an extension of $(\F_2,G)$-modules, with class $e_1 \in H^1(G,\F_2)$.\\ Assume that $\mathcal E_1$ lifts to an extension of $(\Z/4,G)$-modules \[\mathcal E_2: 0 \to \Z/4 (\chi) \to  E_2  \to \Z/4 \to 0. \]  Show that  the identity \[ e_1 \cup e_1=\chi \cup e_1 \in  H^2(G,\F_2)\] holds.\\
\item Show that the pair $(G,\Z/4(\chi))$ is $(1,1)$-cyclotomic, iff the identity $$ c \cup c=\chi \cup c\in H^2(H,\Z/2)$$ holds, for all open subgroups $H \subset G$, and for all $c\in H^1(H,\Z/2)$.\\
\item  Adapt  the statement above, for odd $p$. This leads to an equivalent definition of $(1,1)$-cyclotomic pairs, using only mod $p$ cohomology.
\end{enumerate}
\end{exo}

\subsection{$(n,e)$-smooth profinite groups.}

We can now  state the definition of smooth profinite groups. Equivalent definitions are provided in the Appendix.

\begin{defi}[Smooth profinite group]\label{DefiSmooth}\quad \\
Let $n\geq 1$ and $e\in \mathbb{N}^{\ast}\cup \{\infty\}$. A profinite group $G$ is said to be $(n,e)$-smooth if the following lifting property holds. \\
Let $A$ be a perfect $\F_p$-algebra equipped with a naive action of $G$. Let $L_1$ be a locally free $A$-module of rank one, equipped with a semi-linear naive action of $G$. Let $$c \in H^n(G,L_1)$$ be a cohomology class. Then, there exists a lift of $L_1$, to a $(\W_{e+1}(A),G)$-module $L_{e+1}[c]$, locally free of rank one as a $\W_{e+1}(A)$-module and depending on $c$, such that $c$ belongs to the image of the natural map \[H^n(G,L_{e+1}[c]) \to H^n(G,L_1). \]
\end{defi}

\section{Lifting $(G,\W_r(L)(1))$-torsors.}\label{SecLift}

\subsection{Why cohomology with $\W_r(L)$-coefficients?\\}
Let $X$ be a  variety over a field $F$, of characteristic not $p$. Denote by $G=\pi_1(X)$ `the' \'etale fundamental group of $X$, and by $\Z_p(1)$ the usual Tate module.\\ One can think of the groups $$H^i((G,S),\W_r(L)(j)),$$ for various $(G,\F_p)$-schemes $S$ and $G$-line bundles $L$ over them, as $$`H^i_{et}(X,\W_r(L)(j))\mbox{'}$$ where  $L$ is a system of mod $p$ coefficients of  multiplicative nature,  extending the notion of rank one $(\Z/p^r)$-local system on $X$. This analogy, that  solely involves $G$, is  accurate if $X$ is a $K(\pi,1)$. Our point of view is to perform geometric operations on the coefficients of the cohomology, instead of  the variety $X$ itself. 

\subsection{Lifting geometrically split extensions.}

In this section, $n$ is a positive integer, $e \in \N^{*} \cup \{\infty \}$ and $S$ denotes a $(G,\F_p)$-scheme. We assume that $$(G,\Z/p^{1+e}(1))$$ is a $(n,e)$-cyclotomic pair.

The next Definitions are a prerequisite for stating the main Theorems of this section.  They are especially meaningful, when $n=1$.

\begin{defi}[Lifting cohomology]
Let $S$ be a $(G,\F_p)$-scheme and $L$ be a $G$-linearized line bundle over $S$.\\ Let $1\leq r \leq e$ be an integer and $$c_r \in H^n((G,S), \W_{r}(L)(n)) $$ be a cohomology class.

If $s \in \{r+1,...,e+1\}$ is an integer and $$c_s \in H^n((G,S),\W_{s}(L)(n)) $$ is a cohomology class, we say that $c_s$ lifts $c_r$, if $c_s$ is sent to $c_r$ by the map \[H^n((G,S), \W_{s}(L)(n)) \to H^n((G,S), \W_{r}(L)(n)) \] induced by the natural reduction arrow \[\W_{s}(L)(n) \to \W_{r}(L)(n), \] between $G$-$\W$ modules on $S$.

Accordingly, we say that a $(G,\W_{r}(L)(1))$-torsor lifts, if its cohomology class does.
\end{defi}

\begin{defi}[Strongly geometrically trivial classes]\label{DefiGeomTriv}
Let $S$ be a $(G,\F_p)$-scheme,  and let
$$\mathcal{E}:0\to M_0\to M_1 \overset{\pi}{\to}  M_2\to 0$$
be a short exact sequence of $(G,\W_r)$-modules on $S$.\\
We say that $\mathcal{E}$ is  geometrically trivial (or geometrically split), if $\pi$ admits an  $\mathcal O_S$-linear (non-necessarily $G$-equivariant) section.

More generally, an  $n$-extension of $(G,\W_r)$-modules over $S$
$$0\to M_0\overset{f_0}{\to} M_1 \overset{f_1}{\to} ...\overset{f_{n-1}}{\to} M_{n}\overset{f_{n}}{\to}  M_{n+1}\to 0$$
is strongly geometrically trivial if the following holds. Split off the extension, as the concatenation of short exact sequences \[ \mathcal{E}_i: 0 \to A_{i-1} \to M_i \to A_i \to 0,\] $i=1,\ldots, n$, given by the kernels and cokernels of the $f_i's$. Then, all the $\mathcal E_i$'s are  geometrically trivial.

Accordingly, for a   $(G,\W_r)$-module $M$, we say that a $(G,M)$-torsor, or more generally a cohomology class $c\in H^n((G,S),M)$, is (strongly) geometrically trivial if it can be represented by a (strongly) geometrically trivial $n$-extension of $(G,\W_r)$-modules $$0\to M=M_0\overset{f_0}{\to} M_1 \overset{f_1}{\to} ...\overset{f_{n-1}}{\to} M_{n}\overset{f_{n}}{\to}  \W_r(\mathcal O_S)\to 0.$$ Strongly geometrically trivial classes form a subgroup \[ H^n_{sgt}((G,S),M) \subset H^n((G,S),M).\]
\end{defi}

\begin{rem}
For $n=1$, we have \[ H^1_{sgt}((G,S),M)=\Ker(H^1((G,S),M) \to H^1(S,M)). \] For $n \geq 2$, we have an inclusion \[ H^n_{sgt}((G,S),M) \subset \Ker(H^n((G,S),M) \to H^n(S,M)), \]  which is, in general,  far from being an equality.
\end{rem}

\begin{lem}\label{AffGeomTriv}
Let $S$ be an affine $(G,\F_p)$-scheme. Let $M$ be  a   $(G,\W_r)$-module on $S$. Then, all cohomology classes are strongly geometrically trivial: we have \[ H^n_{sgt}((G,S),M)=H^n((G,S),M) . \]
\end{lem}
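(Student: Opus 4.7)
The plan is to show that every class in $H^n((G,S), M)$ can be represented by an $n$-extension whose middle terms are free as $\W_r(\mathcal O_S)$-modules after forgetting the $G$-action. Such an $n$-extension decomposes into short exact sequences that automatically split $\W_r(\mathcal O_S)$-linearly, because $\W_r(\mathcal O_S)$ is projective over itself, and this projectivity propagates up through the resolution.

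Writing $A := \mathcal O_S(S)$, fix a representing $n$-extension
\[\mathcal E: 0 \to M \to M_1 \to \cdots \to M_n \to \W_r(A) \to 0.\]
Since only finitely many naive $G$-actions are involved, there exists an open normal subgroup $G_1 \trianglelefteq G$ through which all of them factor. Setting $\overline G := G/G_1$, the category of $(G,\W_r(A))$-modules on which $G$ acts through $\overline G$ coincides with the category of modules over the skew group ring $R := \W_r(A) \# \overline G$. Crucially, $R$ is free of rank $|\overline G|$ as a left $\W_r(A)$-module, so every free $R$-module is free as a $\W_r(A)$-module.

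Build a free $R$-resolution $P_{n-1} \to \cdots \to P_0 \to \W_r(A) \to 0$. Set $A_0 := \W_r(A)$ and $A_i := \ker(P_{i-1} \to P_{i-2})$ for $i \geq 1$ (with $P_{-1} := \W_r(A)$). Truncating yields an $n$-extension
\[\mathcal P: 0 \to A_n \to P_{n-1} \to \cdots \to P_0 \to \W_r(A) \to 0,\]
whose associated short exact sequences are $0 \to A_{i+1} \to P_i \to A_i \to 0$ for $i = 0, \ldots, n-1$. Forgetting the $G$-action: since $A_0 = \W_r(A)$ is $\W_r(A)$-projective, the sequence at $i=0$ splits $\W_r(A)$-linearly, so $A_1$ is a $\W_r(A)$-summand of the free module $P_0$, hence $\W_r(A)$-projective. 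Inductively, each $A_i$ is $\W_r(A)$-projective, and each of the above short exact sequences splits $\W_r(A)$-linearly. By the standard comparison argument (the $P_i$'s are $R$-projective), lift $\mathrm{id}_{\W_r(A)}$ to a chain map $\mathcal P \to \mathcal E$ in $R$-modules; at the leftmost position this produces an $R$-linear morphism $\phi: A_n \to M$.

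The pushforward
\[\phi_* \mathcal P: 0 \to M \to M \oplus_{A_n} P_{n-1} \to P_{n-2} \to \cdots \to P_0 \to \W_r(A) \to 0\]
is Yoneda-equivalent to $\mathcal E$ (the chain map $\mathcal P \to \mathcal E$ factors as $\mathcal P \to \phi_*\mathcal P \to \mathcal E$, with the second arrow being the identity on both endpoints), so it represents the same class in $H^n((G,S), M)$. Its associated short exact sequences are: the leftmost one, $0 \to M \to M \oplus_{A_n} P_{n-1} \to A_{n-1} \to 0$, which splits $\W_r(A)$-linearly since $A_{n-1}$ is $\W_r(A)$-projective; and for $i < n-1$, the same $\W_r(A)$-split sequences as in $\mathcal P$. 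Hence $\phi_* \mathcal P$ is strongly geometrically trivial, proving $c \in H^n_{sgt}((G,S), M)$. The subtle point to identify is the double role of the $P_i$'s: $R$-projectivity enables the chain-map lift, while $\W_r(A)$-freeness provides the geometric splittings; choosing them free over $R$ achieves both simultaneously.
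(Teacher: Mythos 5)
Your argument is correct, but it takes a different, more self-contained route than the paper's. The paper's proof is essentially a two-line affair: it cites the dimension-shifting device of \cite[Lemma~5.1]{DF} to produce a representative $n$-extension whose middle terms $M_2,\ldots,M_n$ are $(G,\W_r)$-bundles, and then observes that over an affine base a short exact sequence of quasi-coherent modules with projective cokernel splits. You instead unpack that black box: you pass to the skew group ring $R=\W_r(A)\#\overline G$ (after choosing a finite quotient $\overline G$ through which all the naive actions in a fixed representative factor), build a truncated free $R$-resolution of $\W_r(A)$, lift the identity to a chain map into $\mathcal E$ by $R$-projectivity, and push forward. The key observation you isolate --- that $R$ is $\W_r(A)$-free, so $R$-free modules are simultaneously $R$-projective (enabling the chain-map lift) and $\W_r(A)$-projective (forcing the geometric splittings over an affine base) --- is exactly the mechanism the paper delegates to \cite{DF}. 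Your version buys self-containment and makes the splitting mechanism completely explicit; the paper's version buys brevity and also obtains finite-rank bundle middle terms, which is irrelevant here but is used elsewhere (e.g.\ in Proposition~\ref{BuildGeomTriv}). One small point to keep in mind: the ``$\mathcal O_S$-linear section'' in Definition~\ref{DefiGeomTriv} should be read as ``a section in the category of $\W_r(\mathcal O_S)$-Modules, not required to be $G$-equivariant,'' which is exactly what your $\W_r(A)$-linear splittings provide.
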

\begin{dem}
Adapting the process of  \cite[Lemma 5.1]{DF}, we can represent a given cohomology class  $c \in H^n((G,S),M) $ by an $n$-extension of $(G,\W_r)$-modules on $S$ $$\mathcal C: 0\to M {\to} M_1 \overset{f_1}{\to} ...\overset{f_{n-1}}{\to} M_{n}\overset{f_{n}}{\to} \W_r(\mathcal O_S) \to 0,$$ where $M_2,\ldots, M_n$ are $(G,\W_r)$-bundles. Such an extension is strongly geometrically split. Indeed, over an affine base, short exact sequences of quasi-coherent modules, having a vector bundle as cokernel, are split.
\end{dem}

\begin{prop}\label{BuildGeomTriv}
Let $S$ be a $(G,\F_p)$-scheme.  Let $M$ be  a   $(G,\W_r)$-bundle on $S$. For all $n \geq 1$, there is a natural surjection \[\gamma: H^n(G,H^0(S,M)) \to H^n_{sgt}((G,S),M).\]
\end{prop}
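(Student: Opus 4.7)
The plan is to construct $\gamma$ by translating continuous $G$-cocycles into strongly geometrically trivial $n$-extensions, in a manner compatible with the bar construction of group cohomology.

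Step one is the case $n=1$, which serves as the prototype. A normalized continuous $1$-cocycle $c \colon G \to H^0(S,M)$ gives rise to the extension
\[ \mathcal E(c) \colon 0 \lra M \lra M \oplus \W_r(\mathcal O_S) \lra \W_r(\mathcal O_S) \lra 0, \]
where $G$ acts on the middle term by $g \cdot (m,\lambda) = (g(m) + c(g)\cdot g(\lambda),\ g(\lambda))$. The cocycle identity is equivalent to associativity of this $G$-action, while the inclusion $\lambda \mapsto (0,\lambda)$ supplies an $\mathcal O_S$-linear splitting, making $\mathcal E(c)$ geometrically trivial. A coboundary $c(g)=g(b)-b$ with $b \in H^0(S,M)$ yields an extension isomorphic to the trivial one $\mathcal E(0)$ via $(m,\lambda) \mapsto (m-b\lambda,\lambda)$, so $\gamma$ descends to a well-defined homomorphism on cohomology in degree one.

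Step two handles arbitrary $n \geq 1$ via a resolution. Pick a resolution $P_\bullet \to \Z$ of $\Z$ by continuous free $\Z[G]$-modules. A class in $H^n(G, H^0(S,M))$ is represented by a $G$-equivariant map $\phi \colon P_n \to H^0(S,M)$ vanishing on boundaries. Forming the pushout of the truncated resolution along $\phi$ yields an $n$-extension of $\Z[G]$-modules
\[ 0 \lra H^0(S,M) \lra Q \lra P_{n-2} \lra \cdots \lra P_0 \lra \Z \lra 0. \]
Tensor this sequence over $\Z$ with $\W_r(\mathcal O_S)$ (diagonal $G$-action), then push out the leftmost arrow along the evaluation map $H^0(S,M)\otimes_{\Z}\W_r(\mathcal O_S) \to M$. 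The output is an $n$-extension of $(G,\W_r)$-modules on $S$ of the required shape. Freeness of each $P_i$ over $\Z$ (any choice of $\Z$-basis works) provides $\mathcal O_S$-linear splittings of each constituent short exact sequence, so the resulting $n$-extension is strongly geometrically trivial. Standard comparison of projective resolutions shows that the class of the output depends only on that of $\phi$, giving a homomorphism $\gamma$.

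Step three produces the inverse map. Starting from a strongly geometrically trivial $n$-extension $\mathcal C \colon 0 \to M \to M_1 \to \cdots \to M_n \to \W_r(\mathcal O_S) \to 0$, one chooses an $\mathcal O_S$-linear splitting of each constituent short exact sequence and measures the failure of these splittings to be $G$-equivariant; a bar-type manipulation extracts a normalized $n$-cocycle $G^n \to H^0(S,M)$ whose cohomology class is independent of the splittings (independence being reducible by dimension-shifting to the $n=1$ case, where it follows from Lemma~\ref{autoext}). Composing the two constructions in either order, and verifying that the result is the identity, again reduces by dimension-shifting to the case $n=1$, where both directions are transparent from the explicit formulas. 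The principal obstacle is bookkeeping in higher degrees: one must check carefully that the values of the extracted cocycle genuinely lie in $H^0(S,M)$ and not merely in sections over varying opens, and track the interplay between the $G$-action on $\W_r(\mathcal O_S)$ and the $(G,\W_r)$-structures of the intermediate terms. The whole construction is the relative analogue, for Yoneda extensions of $(G,\W_r)$-modules split as $\W_r(\mathcal O_S)$-modules, of the classical identification $H^n(G,A) \simeq \YExt^n_{\Z[G]}(\Z,A)$.
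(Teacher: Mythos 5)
Your construction of $\gamma$ is conceptually the same as the paper's: represent the class by an $n$-extension whose intermediate terms become well-behaved after base change, tensor with $\W_r(\mathcal O_S)$, and push forward by the evaluation map $H^0(S,M)\otimes\W_r(\mathcal O_S) \to M$. The technical route, however, is different. You obtain the representing $n$-extension by truncating and pushing out a free $\Z[G]$-resolution of $\Z$; the paper instead invokes \cite[Lemma 5.1]{DF} to produce directly an $n$-extension of $(\Z/p^r,G)$-modules with $E_2,\dots,E_n$ free over $\Z/p^r$, then checks by descending induction that the intermediate kernels are also $\Z/p^r$-free, so that $\cdot\otimes_{\Z/p^r}\W_r(\mathcal O_S)$ preserves exactness. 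Your route implicitly relies on the fact that a $\Z$-submodule of a free $\Z$-module is free (so the syzygies $\Omega^i\Z$ split off the $P_{i-1}$ and the tensoring stays exact); this is fine but worth stating explicitly, since the whole point of the Proposition is that exactness and geometric splitness survive the base change.

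Where the two proofs genuinely diverge is in the inverse. Your Step three extracts an $n$-cocycle by measuring the non-equivariance of chosen $\mathcal O_S$-linear splittings and appealing to "bar-type manipulation" plus a dimension-shift for independence of choices. The paper's $\gamma'$ is cleaner and avoids all of this: since each constituent SES of a strongly geometrically trivial $n$-extension is globally split as $\W_r(\mathcal O_S)$-modules, applying $H^0(S,\cdot)$ preserves exactness and yields an $n$-extension of $(\Z/p^r,G)$-modules; pulling that back along the unit map $\Z/p^r\to H^0(S,\W_r(\mathcal O_S))$, $1\mapsto 1$, gives the class in $H^n(G,H^0(S,M))$ directly, with no cocycle extraction and no splitting-dependence to worry about. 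The bookkeeping you flag as the "principal obstacle" is exactly what this global-sections-and-pullback formulation sidesteps. Your sketch is not wrong, but filling in the bar-type extraction and the independence argument would take noticeably more work than the paper's inverse, and the dimension-shifting reduction to $n=1$ you gesture at is not as automatic as you suggest for Yoneda classes (it requires splicing arguments in the style of Oort, cited in the paper). I'd recommend replacing Step three with the global-sections/pullback construction.
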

\begin{dem}
One reduces w.l.o.g. to the case of a finite group $G$.
Start with a class $c \in H^n(G,H^0(S,M)) $. Using  \cite[Lemma 5.1]{DF}, represent it by an $n$-extension of $\Z/p^r[G]$-modules $$\mathcal C: 0\to H^0(S,M){\to} E_1 \overset{g_1}{\to} ...\overset{g_{n-1}}{\to} E_{n}\overset{g_{n}}{\to}  \Z/p^r \to 0,$$  where     $E_2, E_3, \ldots, E_n$ are finite free $\Z/p^r$-modules. It is then straighforward to check, by descending induction on $i$, that $\Ker(g_i)$ is also  a finite free $\Z/p^r$-module, for $i=2,\ldots,n$. Applying \hspace{0.1cm}$\cdot\otimes_{\Z/p^r} \W_r(\mathcal O_S)$ to $\mathcal C$ thus preserves its exactness, yielding an  $n$-extension of $(G, \W_r)$-modules on $S$ $$ \mathcal C_S: 0\to H^0(S,M) \otimes \W_r(\mathcal O_S) {\to} M_1 \overset{f_1}{\to} ...\overset{f_{n-1}}{\to} M_{n}\overset{f_{n}}{\to} \W_r(\mathcal O_S)\to 0,$$ where $M_i:=E_i \otimes \W_r(\mathcal O_S)$. Note that $M_2,\ldots, M_n$ are $(G,\W_r)$-bundles.\\Denote by $$ \alpha: H^0(S,M) \otimes \W_r(\mathcal O_S) \to M$$ the canonical arrow, given by restricting global sections. Form the pushforward  $$\mathcal E:=\alpha_*(\mathcal C_S): 0 \to M  {\to} M'_1 \overset{f_1}{\to} M_2\overset{f_2}{\to}  ...\overset{f_{n-1}}{\to} M_{n}\overset{f_{n}}{\to} \W_r(\mathcal O_S)\to 0;$$ it is an   $n$-extension of $(G, \W_r)$-modules on $S$ whose class in $H^n_{sgt}((G,S),M)$ is denoted by $\gamma(c)$. This construction defines the arrow $\gamma$.\\
To show that $\gamma$ is surjective, start with  $e \in H^n_{sgt}((G,S),M)$, represented by a strongly geometrically trivial $n$-extension of $G \W_r$-modules on $S$ $$\mathcal E: 0 \to M  {\to} M_1 \overset{f_1}{\to} M_2\overset{f_2}{\to}  ...\overset{f_{n-1}}{\to} M_{n}\overset{f_{n}}{\to} \W_r(\mathcal O_S)\to 0.$$ Taking global sections then yields  an $n$-extension of $\Z/p^r[G]$-modules \vspace{0.2cm}\\
{\centering\noindent\makebox[351pt]{$H^0(S,\mathcal E): 0 \to H^0(S,M) {\to} H^0(S,M_1) \overset{g_1}{\to}  ...\overset{g_{n-1}}{\to} H^0(S,M_{n})\overset{g_{n}}{\to} H^0(S,\W_r(\mathcal O_S))\to 0,$}}

which we pullback by the arrow 
$$\fonctionnoname{\Z/p^r}{H^0(S,\W_r(\mathcal O_S))}{1}{1}$$
to get an $n$-extension of $\Z/p^r[G]$-modules $$\mathcal C: 0 \to H^0(S,M) {\to} \hspace{0.1cm} \cdots \hspace{0.1cm} {\to} \Z/p^r\to 0.$$  One then checks that  $\gamma$  maps $[\mathcal C] \in  H^n(G,H^0(S,M))$ to $e$. \end{dem}
\begin{rem}
In Definition \ref{DefiGeomTriv}, assume that $M$ is a $(G,\W_r)$-bundle. Using (the proof of) Proposition \ref{BuildGeomTriv},  every element of $ H^n_{sgt}((G,S),M)$ can be represented by a strongly geometrically trivial $n$-extension of $(G,\W_r)$-bundles $$0\to M=M_0\overset{f_0}{\to} M_1 \overset{f_1}{\to} ...\overset{f_{n-1}}{\to} M_{n}\overset{f_{n}}{\to}  \W_r(\mathcal O_S)\to 0.$$
\end{rem}
\section{Statement of Theorem A.}\label{WeakLiftSec}
In this section, we state and prove the main result of this article: a generalization of classical Kummer theory, for $H^1(\Gal(F_{sep}/F),\mu_{p^r})$, to the broader context of torsors for $(G,\W_r)$-line bundles, over a $(G,\F_p)$-scheme $S$.

\begin{thmA*} \label{WeakLift}
Let $(G,\Z/p^{1+e}(1))$ be a $(n,e)$-cyclotomic pair, relatively to some integer $n \in \N^{\ast}$ and $e\in \N^{\ast} \cup \{\infty\}$.

 Pick an integer $1 \leq r \leq e$. Let $S$ be a $(G,\F_p)$-scheme and $L$ be a $G$-linearized line bundle over $S$. Consider a strongly geometrically trivial class $$c_r \in H_{sgt}^n((G,S),\W_r(L)(n)).$$
 
 Then, there is an integer $m \geq 0$ such that the frobenius pullback $c_r^{(m)}$ of $c_r$ lifts to a  strongly geometrically trivial class, via \[ H_{sgt}^n((G,S),\W_{1+e}(L^{(m)})(n)) \to H_{sgt}^n((G,S),\W_r(L^{(m)})(n)). \]
 In particular, if $S$ is a perfect affine scheme, the natural arrow \[ H^n((G,S),\W_{1+e}(L)(n)) \to H^n((G,S),\W_r(L)(n)) \] is onto. Consequently, $G$ is a $(n,e)$-smooth profinite group.
\end{thmA*}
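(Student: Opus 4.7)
The plan is to first establish the Frobenius-pullback version of the lifting statement for arbitrary $(G,\F_p)$-schemes $S$, then derive the perfect affine case and the smoothness of $G$ as formal corollaries. The argument will proceed by induction on $1+e-r$, reducing to a single-step lift from depth $r$ to depth $r+1$. The relevant dévissage is furnished by the Teichm\"uller exact sequence of Proposition \ref{TeichWitt}:
\[ 0 \lra (\Frob^r)_*(L^{\otimes p^r})(n) \lra \W_{r+1}(L)(n) \lra \W_r(L)(n) \lra 0, \]
whose associated long exact sequence places the obstruction to lifting $c_r$ to depth $r+1$ in $H^{n+1}((G,S), (\Frob^r)_*(L^{\otimes p^r})(n))$.

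For the single-step lift, the crucial translation is Proposition \ref{BuildGeomTriv}: strongly geometrically trivial classes are identified with pure group cohomology, via $H^n_{sgt}((G,S), \W_r(L)(n)) \cong H^n(G, H^0(S, \W_r(L))(n))$. Via the explicit algorithm in its proof, $c_r$ is represented by an $n$-extension of $(\Z/p^r, G)$-modules whose middle terms are free $\Z/p^r$-modules of finite rank. The geometric lifting problem thereby becomes the following group-cohomological one: lift such an $n$-extension to an $n$-extension of $(\Z/p^{r+1},G)$-modules, compatibly with the cyclotomic twist $(n)$.

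The main new input is the cyclotomic hypothesis itself, supplying surjectivity of $H^n(H,\mathcal{T}^{\otimes n}) \to H^n(H,(\mathcal{T}/p)^{\otimes n})$ for every open $H \subset G$. To apply it, one decomposes the free middle terms into rank-one blocks on which $G$ acts (up to a controllable correction) by the $n$-th power of the cyclotomic character. The purpose of the Frobenius pullback $c_r^{(m)}$ is precisely to enable this decomposition: iterating Frobenius on the coefficients $L \mapsto L^{(m)}$ allows the semilinear twist originating in the $G$-structure on $H^0(S,\W_r(L))$ to be absorbed by sufficiently high Frobenius twists. The Teichm\"uller section $\tau_L$ of Proposition \ref{TeichWitt} then provides the missing multiplicative, $G$-equivariant lift of each rank-one block, converting the cyclotomic surjectivity into the desired lift of the $n$-extension.

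The main obstacle will be this bridging step between the rank-one cyclotomic hypothesis and the large coefficient module $H^0(S,\W_r(L))$ equipped with its semilinear $G$-action; the rest is standard dévissage. Once the Frobenius-pullback version is established, the perfect affine case is immediate: Frobenius is then an automorphism of $S$, so $c_r$ and $c_r^{(m)}$ differ only by an isomorphism, and Lemma \ref{AffGeomTriv} identifies $H^n_{sgt} = H^n$ over affine $S$, yielding the desired surjectivity of the reduction $H^n((G,S), \W_{1+e}(L)(n)) \to H^n((G,S), \W_r(L)(n))$. To conclude that $G$ is $(n,e)$-smooth in the sense of Definition \ref{DefiSmooth}, given $L_1$ any $G$-line bundle on perfect affine $A$ and $c \in H^n(G,L_1)$, one untwists by the $(-n)$-th power of the mod-$p$ cyclotomic character to obtain a $G$-line bundle $L_0$ with $L_0(n) \cong L_1$; the canonical $L_{e+1}[c] := \W_{1+e}(L_0)(n)$ (which is even independent of $c$) then lifts $L_1$, and the surjectivity just proven lifts $c$ to $H^n(G, L_{e+1}[c])$ as required.
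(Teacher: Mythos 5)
Your dévissage skeleton (Teichm\"uller exact sequence, translation to group cohomology via Proposition~\ref{BuildGeomTriv}, Shapiro, the final reduction from $(n,e)$-smoothness) is sound in outline, but the proposal has a genuine gap exactly where you flag one, and the mechanism you propose to fill it does not work. You write that the Frobenius pullback $L \mapsto L^{(m)}$ ``allows the semilinear twist originating in the $G$-structure on $H^0(S,\W_r(L))$ to be absorbed'' and thereby decomposes the middle terms ``into rank-one blocks on which $G$ acts by the $n$-th power of the cyclotomic character.'' Neither claim holds. Twisting $L$ by a power of Frobenius leaves the action of $G$ on the (typically enormous) $\F_p$-vector space $H^0(S,\W_r(L))$ just as semilinear and just as far from a direct sum of cyclotomic characters as before; there is no spectral decomposition to exploit, and the cyclotomic hypothesis only speaks of the rank-one module $\mathcal T^{\otimes n}$.

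The actual role of the Frobenius twist in the paper is entirely different, and it is the one idea your proposal is missing. One first reduces (by tracking the finitely many values of a representing cocycle) to an $(\F_p,G)$-algebra $A$ of finite type, and then invokes Theorem~\ref{FIT}: for such $A$ there is an integer $m$ so that $\Frob_A^m : A \to A$ factors, \emph{as a morphism of $[\F_p,G]$-modules}, through a permutation module $\F_p^{(X)}$ for some $G$-set $X$. Permutation modules are where Shapiro becomes applicable: one replaces $G$ by the open stabilizer $H$ of a point of $X$, whereupon the coefficient module becomes $\F_p$ (or $\F_p(n)$ after the cyclotomic twist), and the cyclotomic surjectivity together with the Teichm\"uller lift of the single element $a = g([*]) \in A$ does the rest. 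Without Theorem~\ref{FIT} there is no passage from $H^0(S,\W_r(L))$-coefficients to rank-one coefficients, and hence no way to bring the cyclotomic hypothesis to bear. Note also that the obstruction class you place in $H^{n+1}$ is a red herring here: the hypothesis is a surjectivity statement in degree $n$, not a vanishing statement in degree $n+1$, and the paper never argues via an $H^{n+1}$ obstruction; instead Proposition~\ref{LIFT1EXT} inducts on $r$ (not on $e-r$) by subtracting off a known lift and reducing to the $r=1$ case, which is where Theorem~\ref{FIT} is applied.
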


\begin{rem}
By the very definition of a cyclotomic pair, Theorem A also clearly holds if we replace $G$ by an open (or even closed) subgroup $H \subset G$. Its proof actually invokes a tremendous amount of such subgroups.
\end{rem}
\begin{rem}
For proving Theorem A, without loss of generality, we can assume that $\F_p(1)\simeq \F_p$ has the trivial $G$-action. Indeed, the action of $G$ on $\F_p(1)$ occurs through a multiplicative character $$\xi:G \to \F_p^\times$$ whose kernel $G_0$ has index dividing $p-1$, hence prime-to-$p$. Invoking the usual restriction-corestriction argument, it is then free to replace $G$ by $G_0$.
\end{rem}

\section{Factorizing frobenius.}

In this section, we will encounter infinite dimensional $\mathbb{F}_p$-vector spaces, endowed with a naive action of $G$-- for instance, $(\F_p,G)$-algebras, which are of finite-type as $\F_p$-algebras. We thus state the following definition.
\begin{defi}\hfill\\
If $G$ is a profinite group, an $[\mathbb{F}_p,G]$-module is an $\F_p$-vector space, equipped with a naive $\F_p$-linear action of $G$.
\end{defi}

\begin{rem}
 For $G$ finite, an $[\mathbb{F}_p,G]$-module is simply a module over the group algebra $\F_p[G]$.
\end{rem}

\begin{defi}[Permutation modules]\label{defipermmodule}
An $[\F_p,G]$-module is said to be a permutation module if it has an $\F_p$-basis (possibly infinite) which is permuted by $G$.\\In other words, $P$ is permutation, if it is isomorphic to an $[\F_p,G]$-module of the shape $\F_p^{(X)}$, where $X$ is a  $G$-set (the action of $G$ being naive).

We say that a morphism of $[\F_p,G]$-modules $$f:M \to N$$ factors through a permutation module if there is a permutation module $P$ and a factorization 
$$\xymatrix @C=2pc @R=1pc {
& P \ar[rd]^{g_2}& \\
M \ar[ru]^{g_1} \ar[rr]_f & & N}$$
Such morphisms form a subgroup of $\Hom_{[\F_p,G]}(M,N)$.
\end{defi}
The goal of this section is to provide Theorem \ref{FIT}, a remarkable algebraic device (and a key ingredient in the proof of Theorem A). 

\begin{lem}\label{lemcomalg}
 Let $A$ be an $(\F_p,G)$-algebra, reduced and of finite-type as an $\F_p$-algebra. Setting  $B:=A^G$, the following assertions hold.
\begin{itemize}
\item[\textit{i)}] The $\F_p$-algebra $B$ is of finite-type, and $A$ is finite, as a $B$-module.
\item[\textit{ii)}] There exists a finite $G$-set $X$, and an element $f \in B,$ which is not a zero divisor in $A$, with the following properties:
\begin{itemize}
\item[\textit{a)}] The algebra $A_f / B_f$ is finite \'etale.
\item[\textit{b)}] There exists $G$-equivariant homomorphisms of $B$-modules
\[\phi: A\to B^X,\qquad\text{and}\qquad\psi: B^X \to A,\]
such that
\[\psi \circ \phi = f \Id.\]
\end{itemize}
\item[\textit{iii)}] The extension of $[\F_p,G]$-modules \[(\mathcal E_1): 0 \to A \stackrel {\times f} \to A \stackrel \pi \to A/f \to 0 \] is split by pullback by the natural quotient map $q: A/f^2 \to A/f.$ 

\end{itemize}

\end{lem}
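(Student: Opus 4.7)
The plan is to reduce immediately to the case where $G$ is a finite group (possible since the action is naive), then to address each item in turn; the hardest step is (ii)(b), which in turn feeds directly into (iii). For (i), once $G$ is finite, the monic polynomial $P_a(T):=\prod_{g\in G}(T-g\cdot a)\in B[T]$ vanishes at each $a\in A$, so $A/B$ is integral; combined with the finite generation of $A$ over $\F_p$, E.~Noether's classical finite-generation theorem for rings of invariants yields simultaneously that $B$ is of finite type over $\F_p$ and that $A$ is finite as a $B$-module.

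For (ii)(a), the idea is to prove generic étaleness. At any minimal prime $\mathfrak q\subset B$, the fiber $A\otimes_B\kappa(\mathfrak q)$ is a reduced finite-dimensional $\kappa(\mathfrak q)$-algebra on which $G$ acts with fixed ring $\kappa(\mathfrak q)$; the classical theorem on fixed rings of finite group actions identifies it with a finite product of Galois, hence separable, extensions of $\kappa(\mathfrak q)$, so the fiber is étale. The non-étale locus of $\Spec(A)\to\Spec(B)$ is therefore a proper $G$-invariant closed subscheme, cut out by some ideal $I\subsetneq B$ not contained in any minimal prime of $A$. Prime avoidance furnishes $f\in I$ lying outside every (finitely many) minimal prime of the reduced ring $A$; such $f$ is not a zero-divisor, and $A_f/B_f$ is finite étale.

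For (ii)(b), the main technical step, I would combine Galois descent with the trace form. Over $B_f$, finite étaleness makes the trace pairing $\tr\colon A_f\times A_f\to B_f$ non-degenerate, and the étale $G$-equivariant cover $\Spec(A_f)\to\Spec(B_f)$ has a finite $G$-set $X$ as geometric generic fiber (made constant after possibly shrinking $B_f$ further). Galois descent then exhibits $A_f$ as a $G$-equivariant $B_f$-module direct summand of $B_f^X$, yielding $G$-equivariant $B_f$-linear maps $\phi_f\colon A_f\to B_f^X$ and $\psi_f\colon B_f^X\to A_f$ whose composition is a unit. Clearing denominators gives $G$-equivariant $B$-module maps $\phi\colon A\to B^X$ and $\psi\colon B^X\to A$ with $\psi\circ\phi=f^{k}\cdot\Id$; replacing $f$ by $f^{k}$ preserves (a) and the non-zero-divisor property. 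The main obstacle here is this descent step: controlling $G$-equivariance along the trivialization and producing globally defined maps rather than maps defined only after passing to an étale cover.

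Finally, (iii) is a clean consequence of (ii)(b). Since $B=A^G$, the group $G$ acts trivially on $B$, so its action on $B^X$ and on $(B/f^2)^X$ permutes only the $X$-coordinates; choosing any $\F_p$-linear section $\sigma_0\colon B/f^2\to B$ of the quotient and applying it componentwise yields a $G$-equivariant $\F_p$-linear section $\sigma\colon(B/f^2)^X\to B^X$. Define $s\colon A/f^2\to A$ by $s(\bar b):=\psi(\sigma(\tilde\phi(\bar b)))/f$, where $\tilde\phi\colon A/f^2\to(B/f^2)^X$ is the mod-$f^2$ reduction of $\phi$. For any lift $b\in A$ of $\bar b$, both $\phi(b)$ and $\sigma(\tilde\phi(\bar b))$ lift $\tilde\phi(\bar b)\in(B/f^2)^X$, so they differ by an element of $f^2B^X$; applying the $B$-linear map $\psi$ gives $\psi(\sigma(\tilde\phi(\bar b)))\equiv\psi(\phi(b))=fb\pmod{f^2}$, so this lies in $fA$, division by $f$ is legitimate in $A$, and $s(\bar b)\equiv b\pmod f$. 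The assignment $\bar b\mapsto(s(\bar b),\bar b)$ is therefore a well-defined, $\F_p$-linear, $G$-equivariant section of the pullback extension $q^{\ast}\mathcal E_1\colon 0\to A\to A\times_{A/f}A/f^2\to A/f^2\to 0$, establishing the splitting.
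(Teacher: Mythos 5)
The proposal is broadly in the right spirit, and point (i) matches the paper's (both implicitly cite Noether's theorem on invariants), and the explicit section you construct for (iii) is correct and is an unrolled version of the paper's diagram chase showing $q^*(\mathcal E_1)=f\,\mathcal E_2$ and $f\,\mathcal E_2=0$. However, there is a genuine gap in (ii)(b), which you yourself flag as "the main obstacle" but do not actually close. ``Galois descent'' together with the trace pairing does not by itself yield a $G$\emph{-equivariant} presentation of $A_f$ as a direct summand of a permutation module $B_f^X$: the trace form produces a self-duality $A_f\simeq \underline{\Hom}_{B_f}(A_f,B_f)$, not a permutation structure, and faithfully flat descent along $B_f\to A_f$ gives projectivity of $A_f$ over the group ring $B_f[\Gamma]$ only after you invoke the fact that $A_f\otimes_{B_f}A_f\simeq\prod_{\Gamma}A_f$ \emph{and} know that finitely presented $+$ locally free implies projective; none of this is said. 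What is really needed, and what the paper uses, is the normal basis theorem: after reducing to the case where $A$ is a (finite product of) domain(s) via an explicit idempotent decomposition (the $a_i$'s, $a=\sum a_i$, $b=N_{G/H}(a)$, $e_i=a_i/a\in A_b$), Artin's lemma makes the fraction-field extension $L/K$ Galois with group $G/H$, the normal basis theorem gives a $G$-equivariant isomorphism $L\simeq K^{G/H}$, and clearing denominators produces $f$, $\phi$, $\psi$ with $\psi\circ\phi=f\,\Id$. A secondary issue: your construction of $X$ as ``the geometric generic fiber'' is ill-defined when $\Spec B$ is not irreducible, and ``shrinking $B_f$ further'' to make it constant risks turning $f$ into a zero-divisor in $A$; the paper's reduction to products of domains with $G$ permuting the factors handles this cleanly by taking $X$ to be the appropriate disjoint union of cosets $G/H_i$.

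Your route to (ii)(a) -- density of the \'etale locus plus prime avoidance -- is a workable alternative to the paper's constructive reduction, and it feeds naturally into the statement of (ii)(a), but on its own it does not produce the $G$-set $X$. You would still need the normal basis input to get (ii)(b); I recommend replacing the ``Galois descent + trace form'' step with an explicit appeal to the normal basis theorem for the Galois extensions of fraction fields, which is precisely how the paper proceeds.
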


\begin{proof}
Point i) is classical; let us prove ii). Denote by $H \subset G$ the kernel of the action of $G$ on $A$; it is an open subgroup.

Assume first that $A$ is a domain. Denote by $L$ (resp. $K$) the field of fractions of $A$ (resp. of $B$). By Artin's Lemma, the extension $L/K$ is Galois, with Galois group $G/H$. Put $ X:=G/H$. Then, by the normal basis theorem, there exists a $G$-equivariant isomorphism of $K$-vector spaces $L \stackrel \sim \to K^X.$ The existence of $f \in B$, enjoying the properties required in $a)$ and $b)$, readily follows. This argument instantly extends to the case where $A$ is a finite product of domains, after noting that the group $G$ naturally permutes the factors of the finite product in question (which correspond to the primitive idempotents of $A$).

Let us deal now with the general case: denote by $P_1, \ldots, P_s$ the generic points of $\Spec(A).$ Put $$K_i:= A_{P_i};$$ it is a reduced Artinian ring, hence a field. The canonical map $$ \iota: A \to \prod_{i=1}^s K_i$$ is injective. \\ 
For each index $i=1, \ldots ,s,$ there exists an element $$a_i \in (\cap_{ j \neq i} P_j)-P_i.$$ Equivalently, the element $a_i$ is nonzero in $K_i$, but vanishes in all $K_j$'s, for $j \neq i$. Put $$a:=a_1+ \ldots + a_s.$$ We then have $$ a_i^2- a a_i=0 \in A$$ for all $i$; indeed, these elements vanish in all $K_j$'s. The element $a \in A$ is not a zero divisor, hence so is $$b := N_{G/H}(a) \left(=\prod_{g \in G/H} g\cdot a \right) \in B.$$ Furthermore, the elements $$e_i:= \frac {a_i} {a} \in A_b$$ are primitive idempotents, decomposing $A_b$ into a finite product of domains. We are thus reduced to the previous case.

To prove $iii)$, consider first the commutative diagram of $[\F_p,G]$-modules \[ \xymatrix{(\mathcal E_2):0 \ar[r] & A \ar[r]^{\times f^2} \ar[d]^\phi& A \ar[r] \ar[d]^{\phi}& A/f^2\ar[r] \ar[d]^{\phi/f^2}& 0 \\(\mathcal F_2): 0 \ar[r] & B^X \ar[r]^{\times f^2} \ar[d]^{\psi}& B^X \ar[r] \ar[d]^{\psi}& (B/f^2)^X \ar[r] \ar[d]^{\psi /f^2}& 0 \\ (\mathcal E_2): 0 \ar[r] & A \ar[r]^{\times f^2} & A \ar[r] & A/f^2\ar[r] & 0.} \] The middle exact sequence $\mathcal F_2$ is split, since $B \to B/f^2$ splits as an $\F_p$-linear map. Since $\psi \circ \phi=f \Id$, it follows that $$f \mathcal E_2=0 \in \Ext^1_{[\F_p,G]}(A/f^2,A).$$ The diagram \[ \xymatrix{(\mathcal E_1):0 \ar[r] & A \ar[r]^{\times f} \ar@{=}[d]& A \ar[r] \ar[d]^{\times f}& A/f\ar[r] \ar[d]^{\times f}& 0 \\ (\mathcal E_2): 0 \ar[r] & A \ar[r]^{\times f^2} & A \ar[r] & A/f^2\ar[r] & 0} \] shows that $q^*(\mathcal E_1)=f \mathcal E_2$. This completes the proof.
\end{proof}

Arguably, the next theorem maximizes the product (simplicity $\times$ depth), among all  results of this article.

\begin{thm}[]\label{FIT}

Let $A$ be an $(\F_p,G)$-algebra, of finite-type as an $\F_p$-algebra. Then, there exists an integer $m\geq 0$ such that, as a morphism of $[\F_p,G]$-modules, 
$$\frob_A^m:A \to A$$ factors through a permutation module.
\end{thm}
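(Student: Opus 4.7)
The plan is to proceed by induction on the Krull dimension of $A$, after a preliminary reduction to the reduced case. Since $A$ is of finite type over $\F_p$, its nilradical $N$ satisfies $N^{p^s} = 0$ for some $s$; the assignment $\bar a \mapsto a^{p^s}$ then defines a well-defined ring homomorphism $\sigma: A_{\mathrm{red}} \to A$ that commutes with Frobenius, so a factorization of $\Frob^{m'}_{A_{\mathrm{red}}}$ through a permutation module $Q$ yields a factorization of $\Frob^{m'+s}_A = \sigma \circ \Frob^{m'}_{A_{\mathrm{red}}} \circ \pi$ through the same $Q$. Hence I may assume $A$ is reduced.

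The base case $\dim A = 0$ is easy: $A$ is Artinian reduced, hence a finite product of finite fields permuted by $G$. Decomposing into $G$-orbits and applying the normal basis theorem to each finite field relative to its stabilizer-fixed subfield exhibits $A$ itself as a permutation $[\F_p,G]$-module, and $\Frob^m_A = \Id_A$ for any common multiple $m$ of the degrees of these finite fields; the required factorization is then trivial.

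For $\dim A \geq 1$, I invoke Lemma \ref{lemcomalg}, which produces $f \in B := A^G$ not a zero-divisor, a finite $G$-set $X$, and $B$-linear $G$-equivariant maps $\phi: A \to B^X$, $\psi: B^X \to A$ with $\psi \circ \phi = f \cdot \Id_A$. Because $G$ acts trivially on $B$, the target $B^X$ is a permutation $[\F_p,G]$-module. The quotient $A/f$ has strictly smaller Krull dimension than $A$, so by the inductive hypothesis (applied after the preliminary reduction to the reduced case for $A/f$) there exist an integer $m_1 \geq 0$, a permutation module $Q$, and $[\F_p,G]$-linear maps $\bar\gamma: A/f \to Q$, $\bar\delta: Q \to A/f$ with $\bar\delta \circ \bar\gamma = \Frob^{m_1}_{A/f}$.

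The main difficulty is to lift $\bar\delta$ to a $[\F_p,G]$-linear map into $A$. I resolve it by combining Lemma \ref{lemcomalg}(iii) with a Frobenius-lift trick. Part (iii) supplies a $[\F_p,G]$-linear map $\tau: A/f^2 \to A$ with $\pi \circ \tau = q$, arising from the splitting of $q^*(\mathcal E_1)$. In parallel, the identity $(a+fb)^p \equiv a^p \pmod{f^2}$, valid in characteristic $p$ since $p \geq 2$, shows that the assignment $\bar a \mapsto \overline{a^p}$ defines a $[\F_p,G]$-linear map $\Frob: A/f \to A/f^2$, so that $\Sigma := \tau \circ \Frob: A/f \to A$ is a $[\F_p,G]$-linear lift of $\Frob_{A/f}$. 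Setting $\delta := \Sigma \circ \bar\delta$ and $\gamma := \bar\gamma \circ \pi$, one computes $\pi \circ \delta \circ \gamma = \Frob^{m_1+1}_{A/f} \circ \pi = \pi \circ \Frob^{m_1+1}_A$, so the difference $\Frob^{m_1+1}_A - \delta \circ \gamma$ has image in $fA$ and writes uniquely as $f \cdot \nu$ for some $[\F_p,G]$-linear $\nu: A \to A$ (legitimate because $f$ is not a zero-divisor and is $G$-invariant). I then conclude
\[
\Frob^{m_1+1}_A = \delta \circ \gamma + \psi \circ \phi \circ \nu,
\]
which factors through the permutation module $Q \oplus B^X$, completing the induction.
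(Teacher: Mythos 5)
Your proof is correct and follows essentially the same route as the paper's: reduction to the reduced case via factoring a high power of Frobenius through $A\to A_{\mathrm{red}}$, induction on Krull dimension, and the key use of Lemma \ref{lemcomalg}~\textit{(ii)} (to factor multiplication by $f$ through $B^X$) and \textit{(iii)} (to lift the Frobenius of $A/f$, via $\tau\circ\Frob$, to a map $A/f\to A$). The only differences are cosmetic — you spell out the $\dim A=0$ base case directly instead of letting it degenerate to $A/f=0$, and you make the final decomposition $\Frob^{m_1+1}_A=\delta\gamma+\psi\phi\nu$ explicit — but the argument is the one in the paper.
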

\begin{proof}
Let $i \geq 0$ be such that the nilradical $\mathcal N$ of $A$ satisfies $\mathcal N^{p^i}=0$. Then $\frob_A^i$ canonically factors through $A \to A_{red}$ . We can thus assume that $A$ is reduced, and proceed by induction on the (Krull) dimension of $A$. We use the notation and the results of Lemma \ref{lemcomalg}. By induction, there exists an integer $m' \geq 0$, working for $A/f$. By point $iii)$ of Lemma \ref{lemcomalg}, there exist a morphism of $[\F_p,G]$-modules $s: A/f^2 \to A$, such that $ \pi \circ s=q$. Denote by $\phi: A/f \to A/f^2$ the canonical map, sending $a$ (mod $f$) to $a^p$ (mod $f^2$). Put $$F_1:= s \circ \phi \circ \frob_{A/f}^{m'} \circ \pi: A \to A;$$ it is a morphism of $[\F_p,G]$-modules, factoring through a permutation module (because $\frob_{A/f}^{m'}$ does). Then, the difference $ \frob^{m'+1}-F_1$ takes values in the ideal $fA \subset A$. Hence, there exists a morphism of $[\F_p,G]$-modules $$F_2: A \to A,$$ such that \[\frob_A^{m'+1}=F_1+fF_2.\] By point $ii)$ of Lemma \ref{lemcomalg}, the multiplication by $f$ morphism $A \to A$ factors through a permutation module- hence so does $fF_2$. In particular, $m:=m'+1$ does the job.
\end{proof}

\begin{exo}\label{exoFIT}
Refining the proof of Theorem \ref{FIT},  show the following more precise statement, under the same assumptions. Consider the product $$\mathbf P(A):=\prod_{x \in \mathrm{Max}(A)} k(x), $$ taken over all closed points $x \in Spec(A)$, with residue field the finite field $k(x)$. Show that it is a permutation $[\F_p,G]$-module, and that there exists an integer $m \geq 0$, such that $$ \fonction{\frob^m_A}{A}{A}{a}{a^{p^m}}$$factors through the natural map $A\to \mathbf P(A),$ as a morphism of $[\F_p,G]$-modules.
\end{exo}

\begin{qu}(Does Theorem \ref{FIT} hold for modules?)\\
Let $M$ be an $A[G]$-module, which is finite locally free an an $A$-module. Is there an integer $m \geq 0$ such that $$ \fonction{\frob^m_M}{M}{M^{(m)}}{x}{1 \otimes x}$$ factors through the natural map $M\to \mathbf P(A) \otimes_A M,$ as a morphism of $[\F_p,G]$-modules? In general, the answer is most likely ``no''.
\end{qu}

\section{The transfer, for finite vector spaces.}\label{sectransfer}

 In our unpublished work \cite{DCF0}, we proposed a systematic study of finite modules over $\W_r(k)$, $k$ a finite field. There, a noteworthy algebraic construction is the so-called \textit{transfer}. At a minor cost, it actually implies a refined explicit version of Theorem \ref{FIT}- in the spirit of Exercise \ref{exoFIT}. This section is a synthetic presentation of the transfer. In this work, it is only used to prove Proposition \ref{propnoperfcov}, which is not needed to prove the Smoothness Theorem.\\
 
In this section, $q=p^r$ is a power of $p$, $k\simeq \F_q$ is a field with $q$ elements, and  $V$ is a finite-dimensional $k$-vector space, of dimension $d$.\\
Consider the frobenius 
$$\fonction{ \frob^r}{ \A_k( V)}{\A_k ( {V^{(r)}})}{v}{ v^{(r)}:=v \otimes 1}$$
as a morphism of $k$-varieties, or equivalently here, as a \textit{polynomial law} (see \cite{Fe}). Up to the choice of a basis, it is given by raising all coordinates to the $q$-th power. Observe that the frobenius \textit{map}   $$ \frob^r: V \to {V^{(r)}}$$ (the frobenius morphism applied to $k$-rational points) is a $k$-linear isomorphism, implicitly used in  this section. 

\begin{nota}
	The frobenius law $$\frob^r:  \A_k( V) \to  \A_k(V)$$  is  denoted by $F_V$, or just  by $F$ if the dependence in $V$ is clear. 
\end{nota}

\begin{defi}\label{defiTHV}
    Let $H \subset V$ be a hyperplane. Let $\pi \in V^\vee$ be a linear form with kernel $H$.
The  formula
$$\fonction{T_{H,V}}{\A_{k}(V)}{\A_{k}( H)}{v}{F(v) -\pi(v)^{q-1} v}$$
defines a polynomial law,  homogeneous of degree $q$.\\
Let us briefly justify that this formula makes sense: since $\lambda^{q-1}=1$, for all $\lambda \in k^\times$, it is clear that $ T_{H,V}$  depends  on $H$ only, not on the choice of $\pi$.  Moreover, the computation
 $$\pi(T_{H,V}(v)) =   \pi(F(v)) -\pi(v)^{q}= \pi(v)^{q} - \pi(v)^{q}=0,$$ 
shows that $ T_{H,V}$ actually takes its values in $ \A_{k}( H )\subset \A_{k}(V)$.\\
The arrow $T_{H,V}$ is called the transfer, from $V$ to $H$. By universal property of divided powers, it is given by a $k$-linear map \[\Gamma^q_k(V) \to H,\] which we also denote by $T_{H,V}$.
\end{defi}
\begin{lem}\label{lemTFH}

Consider the composite $$ \Phi_1 :\A_k(V)   \xrightarrow{T_V} \A_k(\bigoplus_{H} H)\to  \A_k(V),$$ where the sum is taken over all $k$-hyperplanes $H \subset V$, where $T_V$ is the sum of all $T_{H,V}$'s,  and the second arrow is  the sum of the inclusions $H \to V$. \\ Then $\Phi_1=F_V.$
\end{lem}

\begin{dem}
	We argue on the level of the functor of points of  the $k$-variety $ \A_k(V) $.\\
	Let $k'/k$ be a commutative $k$-algebra. Pick  $v \in V \otimes_k k'$.\\
	Since each hyperplane in $V$ is the kernel of $q-1$ linear forms  (a number which equals $-1$ modulo $p$), the composite under consideration sends $v$ to $$-\sum_{\pi \in V^*, \pi \neq 0}(F(v)-\pi(v)^{q-1}v).$$ The Proposition then follows from the fact that $\vert V^* \vert =-1$ mod $p$, and that $$\sum_{\pi \in V^* }\pi(v)^{q-1} =0.$$ Indeed,  this sum  is of the shape $$\sum_{x \in k^d} P(x),$$ where $P \in k'[X_1, \ldots, X_d]$ is a homogeneous polynomial, of degree $q-1$. It is a classical fact (used in the proof of the Chevalley-Warning Theorem) that the only monomials which can contribute to this sum are those of the form $X_1^{a_1} \ldots X_d^{a_d}$, with all $a_i$'s nonzero and divisible by 
$q-1$. Since $d \geq 2$, these do not occur, and the claim is proved.
\end{dem}

\begin{rem}
Recall that $\Gamma^q(V)$ is spanned by pure symbols $[v]_q,$ for $v\in V$. On these, it is straightforward that 
\[T_{H,V}: \Gamma^q_k(V) \to H\]
 is given by the formula
\[
    [v]_q \mapsto
    \begin{cases}
      0 , & \text{if $v \notin H$} \\
      v,       &  \text{if $v \in H$. }
    \end{cases}
 \]
It is (perhaps) surprising, that  such a formula actually defines a $k$-linear map! One may then provide another proof of Lemma \ref{lemTFH} using the following fact: given  $0 \neq v \in V$, the number of hyperplanes $H$ containing $v$, is congruent to $1$ mod $q$ (hence mod $p$). 
\end{rem}

    \begin{lem}\label{lemTindep}
    Let $0 \subsetneq  W  \subsetneq V$ be a $k$-subspace, of codimension $c$.  \\Choose a complete flag $\nabla$ on the $k$-vector space $V/W$, yielding a filtration \[W=V_{d-c} \subset V_{d-c+1} \subset \ldots \subset V_d=V,\] with $\dim(V_i)=i$. 
    
    Then, the  composite \[T_\nabla:=(T_{V_{d-c},V_{d-c+1}}  \circ T_{V_{d-c+1},V_{d-c+2}} \circ \ldots \circ T_{V_{d-1},V_d}): \A(V) \to \A(W)\] is a homogeneous law, of degree $q^c$. It does not depend on the choice of $\nabla$.

\end{lem}

\begin{dem}

    By definition, it is clear that $T_\nabla$ is   of the shape \[v \mapsto \sum_{i=0}^c \lambda_i(v) F^i(v),\] for some $k$-morphisms \[\lambda_i: \A(V) \to \A^1,\] in other words, elements of the symmetric algebra $A:=\Sym_k(V^\vee)$. Consider the following (totally split) polynomial with coefficients in $A$: \[P(T):= \prod_{\pi \in (V/W)^\vee}(T- \pi),\] where $(V/W)^\vee$ is considered as a $k$-subspace of $V^\vee =\Sym^1(V^\vee) \subset A$, in the natural way. We claim that \[T_{\nabla}(v)=P(F)(v). \]As the right side is clearly independent of $\nabla$, this proves the Lemma. The claim is easily verified if $c=1$, i.e. if  $W=H$ is a hyperplane, for then $P(X)=X^q-\pi^{q-1}X$, where $0 \neq \pi$ is any linear form with kernel $H$ (as in Definition \ref{defiTHV}). The general case in then readily checked, by induction on $c$.
\end{dem}

   \begin{defi}\label{defiTWC}
 Let $0 \subsetneq  W  \subsetneq V$ be a $k$-subspace. Denote by $T_{W,V}$ the morphism $T_\nabla$ of Lemma \ref{lemTindep} (which does not depend on the choice of a complete flag $\nabla$).

\end{defi}

	\begin{thm}\label{transfrob}
	
		Let $c \in \{1,\ldots, d-1\}$ be an integer.
Consider the composite $$\Phi_c:=\A_k(V) \stackrel {T_{c,V}} \to \A_k(\bigoplus_W W) \to \A_k(V),$$  where the sum is taken over all $k$-subspaces $W \subset V$ of codimension $c$, where $T_{c,V}$ denotes the sum of all $T_{W,V}$'s,  and where the second map is given by  sum of the inclusions $W\to V$.\\
	Then $\Phi_c=F_V^c$.

\end{thm}

\begin{dem}

By induction on $c \geq 1$. The case $c=1$ is Lemma \ref{lemTFH}. For the induction step, look at the composite  $$ \A_k(V) \stackrel  {T_{c,V}} \to \A_k(\bigoplus_{Z \subset V} Z)  \stackrel  {\sum T_Z}  \to \A_k(\bigoplus_{W \subset Z \subset V} W) \xrightarrow{\nat} \A_k(\bigoplus_{Z \subset V} Z) \xrightarrow{\nat} \A_k(V), $$  where $\mathrm{codim}(W)=c+1$, $\mathrm{codim}(Z)=c$, and  the middle  sum is taken over all partial flags $(W \subset Z \subset V)$.  Let us compute it, in two different ways. \\ First way. The composite of the two arrows in the middle, equal $F=\frob^r$ by Lemma \ref{lemTFH} (applied to each factor $Z$). By naturality of frobenius, it follows   that the composite of all four arrows equals the composition of $\Phi_c$ and $F_V$, hence equals $F_V^{c+1}$ by the induction hypothesis.\\
Second way. For every partial flag $W \subset Z \subset  V$ as above, the composite \[\A_k(V) \stackrel  {T_{Z,V}}  \to  \A_k(Z) \stackrel  {T_{W,Z}}  \to \A_k(W)  \] equals $T_{W,V}$ (Lemma \ref{lemTindep}). Observe that, for a given subspace $W\subset V$, the number of possible choices for $Z $ is  congruent to $1$ modulo $p$ (the cardinality of a projective space over  $k$). Thus, the composite of all four arrows  equals $\Phi_{c+1}$.

\end{dem}

\begin{rem}
Let $G$ be a (pro)finite group, and assume that $V$ is a $(k,G)$-module. In Theorem \ref{transfrob}, take $c=d-1$. One then obtains a factorisation of $F_V^{d-1}$ as, $$\A_k(V) \stackrel {T_{d-1,V}} \to \A_k(\bigoplus_L L) \to \A_k(V).$$ Upon linearizing this factorisation (using the universal property of divided powers), then dualizing, this yields the following. There exists a canonical linear map $T$, such that  the  $k$-linear map (verschiebung) 
$$\fonction{\ver^{r(d-1)}}{V}{\Sym^{q^{d-1}}(V)}{v}{v^{q^{d-1}}}$$
factors as  as \[ V\xrightarrow{\nat}  \bigoplus_{V \to L} L \xrightarrow{T} \Sym^{q^{d-1}}(V) , \] where the direct sum is taken over all $k$-lines $L$ that are \textit{quotients} of $V$. If $G$ is a $p$-group, then the middle term is a permutation $G$-module, in the sense of Definition \ref{defipermmodule}.  One may use this  to give a refined  version of  Theorem \ref{FIT}, where   the permutation module, the integer $m$ and the  factorization are explicit.
\end{rem}

\begin{exo}
    As a continuation of the preceding Remark, give a short proof of Theorem \ref{FIT}, using Theorem \ref{transfrob}.
    \end{exo}
\section{Proof of Theorem A, for $S$ affine and $L = \mathcal O_S$.}

To keep notation light,  we give the proof for $n=1$. The general case is the same.

We first assume that $S=\Spec(A)$ and $L = \mathcal O_S$. Note that in this case, any $(G,\W_r(L)(1))$-torsor is strongly geometrically trivial by Lemma \ref{AffGeomTriv}. By Proposition \ref{BuildGeomTriv}, $(G,\W_r(L)(1))$-torsors are then classified by $H^1(G,\W_r(A)(1))$- in the usual setting of the cohomology of a profinite group $G$, with values in a discrete $G$-module. We are then concerned with showing that after some suitable frobenius pullback, all such classes admit a compatible system of liftings.

To prove the Theorem, it is straightforward to reduce to the case where $A$ is an $\F_p$-algebra of finite-type. Indeed, consider a cohomology class $c$, represented by a cocycle $$z_g\in Z^1(G,\W_r(A)(1)),$$ which factors through an open subgroup of $G$. It thus  takes finitely many values. Each of these values has $r$ Witt coordinates (in $A$). The  $G$-orbit of each of these is also finite. Altogether, we may then replace $A$ by the $(G,\F_p)$-algebra generated by a finite $G$-invariant collection of elements of $A$.

In the current setting, Theorem A then boils down to the following Proposition. Its content is more precise: the growth rate of the power of frobenius needed to lift classes in $ H^1(G, \W_{r}(A)(1))$ is actually \textit{linear} in $r$.

\begin{prop} \label{LIFT1EXT}
Let $A$ be a $(G,\F_p)$-algebra of finite-type over $\F_p$, and let $(G,\Z/p^{1+e}(1))$ be a $(1,e)$-cyclotomic pair, with $e\in \N^{\ast} \cup \{\infty\}$. There is a non-negative integer $m(A)$ with the following property.

Let $r \in \{ 1, \ldots, e\}$ be an integer and $c \in H^1(G, \W_{r}(A)(1))$ be a cohomology class. Then $(\frob^{m(A)r})^*(c)$ lifts to $ H^1(G, \W_{e+1}(A)(1))$.
\end{prop}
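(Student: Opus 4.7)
The plan is to proceed by induction on $r$. The base case $r=1$ harnesses Theorem \ref{FIT} to replace $A$ by a permutation $[\F_p,G]$-module, where Shapiro's lemma together with the cyclotomicity hypothesis in degree one directly deliver the lifts. The inductive step is powered by the short exact sequence of $[\F_p,G]$-modules
\[ 0 \lra \W_{r-1}(A)(1) \stackrel{V}\lra \W_r(A)(1) \stackrel{\pi_{r,1}}\lra A(1) \lra 0 \]
extracted from Proposition \ref{TeichWitt} (we ignore the Frobenius twist on the kernel, which is immaterial for $G$-cohomology); it splits the depth-$r$ lifting problem into a depth-$1$ problem and a depth-$(r-1)$ problem.

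For the base case, Theorem \ref{FIT} provides $m(A)\geq 0$, a permutation $[\F_p,G]$-module $P=\F_p^{(X)}$, and $[\F_p,G]$-linear maps $g_1:A\to P$, $g_2:P\to A$ with $g_2\circ g_1=\Frob_A^{m(A)}$. Given $c\in H^1(G, A(1))$, I push forward by $g_1$ to get $c':=(g_1)_*(c)\in H^1(G, P(1))$. Decomposing $X=\bigsqcup_i X_i$ into $G$-orbits $X_i=G/H_i$ (finite by continuity of the action on $A$), the projection formula and Shapiro's lemma yield
\[ H^1(G, P(1)) \;\cong\; \bigoplus_i H^1(H_i,\,\F_p(1)). \]
Each $H_i$ is open in $G$, hence $(H_i,\Z/p^{e+1}(1))$ inherits the $(1,e)$-cyclotomic property, so each summand lifts to $H^1(H_i,\Z/p^{e+1}(1))$; assembling gives $\widetilde{c}'\in H^1(G,(\Z/p^{e+1})^{(X)}(1))$. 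I then define $\widetilde{g}_2:(\Z/p^{e+1})^{(X)}\to \W_{e+1}(A)$ by $e_x\mapsto \tau(g_2(e_x))$ and $\Z/p^{e+1}$-linear extension; it is $G$-equivariant (since $G$ permutes the basis $X$, $g_2$ is $G$-equivariant, and the Teichm\"uller section $\tau$ commutes with the ring-automorphism action of $G$ on $A$) and reduces mod $p$ to $g_2$. Setting $\widetilde c:=(\widetilde g_2)_*(\widetilde c')$, its mod-$p$ reduction equals $(g_2\circ g_1)_*(c)=c^{(m(A))}$, as required.

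For the inductive step, reducing the cyclotomic character modulo $p^e$ shows that $(G,\Z/p^e(1))$ is $(1,e-1)$-cyclotomic, so the induction hypothesis applies with parameters $(r-1, e-1)$ and bound $(r-1)\,m(A)$. Given $c_r\in H^1(G,\W_r(A)(1))$, let $c_1:=(\pi_{r,1})_*(c_r)$, and invoke the base case to lift $c_1^{(m(A))}$ to $\widetilde c\in H^1(G,\W_{e+1}(A)(1))$. The difference $c_r^{(m(A))}-\widetilde c|_r$ has trivial reduction mod $p$, hence equals $V_*(d)$ for some $d\in H^1(G,\W_{r-1}(A)(1))$. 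By induction, $d^{((r-1)m(A))}$ lifts to $\widetilde d\in H^1(G,\W_e(A)(1))$. Using the identity $\Frob\circ V=V\circ\Frob$ on truncated Witt vectors together with the compatibility of the Verschiebung sequences under the reductions $\pi_{e+1,r}$, one checks that $\widetilde c^{((r-1)m(A))}+V_*(\widetilde d)\in H^1(G,\W_{e+1}(A)(1))$ reduces mod $p^r$ to exactly $c_r^{(r\,m(A))}$, completing the induction.

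The main obstacle is more conceptual than technical: Theorem \ref{FIT} is the structural result that powers the entire argument, by forcing Frobenius of $A$ to factor through a permutation $[\F_p,G]$-module on which Shapiro's lemma and the $(1,e)$-cyclotomic assumption directly supply lifts. The remaining care, concentrated in the inductive step, lies in tracking the commutation of Frobenius pullback with Verschiebung and the compatibility of the reductions $\pi_{e+1,r}$, so that the $\W_1$-lift from the base case and the $\W_{r-1}$-lift from induction glue into a single class in $\W_{e+1}(A)(1)$ whose reduction mod $p^r$ is precisely $c_r^{(r\,m(A))}$, matching the linear-in-$r$ bound claimed in the statement.
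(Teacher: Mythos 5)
Your proposal is correct, and the base case coincides with the paper's: both harness Theorem \ref{FIT} to factor $\Frob^{m(A)}$ through a permutation module $\F_p^{(X)}$, decompose $X$ into orbits, reduce to a single orbit via Shapiro, and then use the Teichm\"uller lift of the element $g([\ast])\in A$ together with the $(1,e)$-cyclotomic hypothesis to supply the lift; your $G$-equivariant map $\widetilde g_2$ is just a tidy packaging of the paper's diagram chase. The inductive step, however, runs in the opposite direction. You slice off the \emph{bottom} Witt coordinate, via the Verschiebung sequence $0 \to \W_{r-1}(A)(1) \stackrel{V}\to \W_r(A)(1) \to A(1) \to 0$: the quotient $A(1)$ is handled by the base case, while the kernel $\W_{r-1}(A)(1)$ is handled by the inductive hypothesis \emph{at one lower depth} $e-1$ and then re-embedded by $V_*$ into $\W_{e+1}(A)(1)$. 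The paper instead slices off the \emph{top} coordinate, via the truncation sequence $0 \to A(1) \to \W_{r+1}(A)(1) \to \W_r(A)(1) \to 0$: the quotient is handled by the inductive hypothesis at the \emph{same} depth $e$, while the kernel $A(1)$ is handled by the base case and the natural inclusion $A \hookrightarrow \W_{e+1}(A)$. Both decompositions give the same linear bound $r\cdot m(A)$. The paper's route is slightly lighter on bookkeeping, since the auxiliary depth $e$ never moves; your route works too but requires the extra (elementary, but not free) observation that $(G,\Z/p^e(1))$ is $(1,e-1)$-cyclotomic, plus the commutation $\pi_{e+1,r}\circ V = V \circ \pi_{e,r-1}$ and $\Frob \circ V = V \circ \Frob$ to glue the two pieces. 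Your final verification that the assembled class reduces to $c_r^{(rm(A))}$ mod $p^r$ is correct once these commutations are spelled out.
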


\begin{proof}
 By Theorem \ref{FIT} there exists $m=m(A) \geq 0$ and a factorization \[\frob^{m}:A \stackrel f \to \F_p^{(X)} \stackrel g \to A, \] for some $G$-set $X$. We are going to show that this $m$ satisfies the conclusion of the Proposition. We first deal with the case $r=1$, showing that classes in the image of (the map induced on $H^1(G,\cdot)$ by the cyclotomic twist of) $g$ lift  to $ H^1(G, \W_{e+1}(A)(1))$. The $G$-set $X$ is a disjoint union of cosets $G/H_i$, where the $H_i$'s are open subgroups of $G$. It suffices to treat the case of a single orbit $G/H$. Using Shapiro's Lemma, we can then replace $G$ by $H$,  reducing to the case $X=\{* \}$. Put $$a:=g([*]) \in A.$$ For all $i \geq 1$, denote by $$a_{i+1}:=\tau_{i+1}(a) \in \W_{i+1}(A)$$ the Teichm\"uller representative of $a$.
 
 Let $0 \leq i \leq e$ be an integer. We have a commutative diagram
 
 \[\xymatrix @R=5mm{ \Z/p^{i+1} \Z  \ar[r] \ar[d] & \ldots \ar[r] & \Z/p^{2} \Z  \ar[r] \ar[d] & \Z/p \Z  \ar[d] \\ \W_{i+1} (A) \ar[r] & \ldots \ar[r] & \W_{2} (A) \ar[r] & A , } \]
 where the horizontal maps are the natural surjections, and the $i$-th vertical map sends $1 \in \Z/p^{i+1} \Z $ to $a_{i+1}$. After twisting this diagram by $\Z/p^{1+e}(1)$, by definition of $(1,e)$-smoothness, all arrows in the upper line induce surjections on $H^1(K,\cdot)$, for any open subgroup $K$ of $G$. Thus, $\Im(g_*) \subset H^1(G,A(1))$ indeed consists of classes, that lift as required.
 
The  general case is by induction on $r$. Assuming the result known for $r$, let $$c \in H^1(G,\W_{r+1}(A)(1))$$ be a cohomology class. Denote by $b$ its reduction to a class in $H^1(G,\W_{r}(A)(1))$. By induction, we know that $b_r:=(\frob^{rm})^*(b)$ admits a lifting $$(b_{1+e}) \in H^1(G,\W_{1+e}(A)(1)).$$ Denote by $(b_{r+1}) \in H^1(G,\W_{1+r}(A)(1)) $ the reduction of $b_{1+e}$. Set $$c':=(\frob^{rm})^*(c)-b_{r+1}.$$ Via the maps induced in cohomology from the exact sequence \[ 0 \to A(1) \stackrel {i_r} \to \W_{r+1}(A)(1) \to \W_{r}(A)(1) \to 0,\] $c'$ reduces to $0$ in $H^1(G,\W_{r}(A)(1))$, hence comes from a class $b' \in H^1(G,A(1)).$ By the $n=1$ case, we get that $(\frob^m)^*(b')$ lifts to $H^1(G,\W_{1+e}(A)(1))$ . Hence, $(\frob^m)^*(c')$ lifts as well. Finally, we see that $$(\frob^{(r+1)m})^*(c)=(\frob^m)^*(b_{r+1})+(\frob^m)^*(c')$$ lifts as stated- as a sum of classes sharing this property. 
\end{proof}

\subsection{The general case.}\label{secproofAgen}
We now prove Theorem A, for $S$ and $L$ arbitrary.\\
By assumption, there exists a (not necessarily $G$-equivariant) trivialization \[ F: P \stackrel \sim \to \W_r(L)(1)\] of the $ \W_r(L)(1)$-torsor $P$ over $S$. Remembering that the automorphism group of the trivial $ \W_r(L)(1)$-torsor is $H^0_{}(S, \W_r(L)(1))$, we see that the assignment \[\fonction{z}{G}{H^0_{}(S, \W_r(L)(1))}{g}{z_g:= F^{-1} \circ g \circ F \circ g^{-1}} \] is a $1$-cocycle. The $(G,\W_r(L)(1))$-torsor $P$ can be recovered as the twist of the trivial $(G,\W_r(L)(1))$-torsor by this cocycle. Denote by $$c \in H^1(G, H^0_{}(S,\W_r(L)(1)))$$ the cohomology class of $z$.\\ Lifting $P$ as required is then equivalent to lifting $c$ to $$c_{1+e} \in H^1(G, H^0_{}(S,\W_{1+e}(L)(1))).$$ Theorem A is then a consequence of the following Proposition. 

\begin{prop} \label{LIFT2EXTL}
Let $e\in  \N_{\ast} \cup \{\infty\}$. Let $(G,\Z/p^{1+e}(1))$ be a $(1,e)$-cyclotomic pair. Let $S$ be a $(G,\F_p)$-scheme. Pick an integer $1\leq r \leq e$.

Let $$c \in H^1(G, H^0_{}(S,\W_r(L))(1))$$ be a cohomology class. Then, there exists an integer $m\geq 0$, such that the class $$(\frob^m)^{\ast}(c) \in H^1(G, H^0_{}(S,\W_r(L^{\otimes p^m}))(1))$$ lifts to $$c_{1+e} \in H^1(G, H^0_{}(S,\W_{1+e}(L^{\otimes p^m}))(1)).$$ 
\end{prop}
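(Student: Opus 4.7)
The plan is to reduce Proposition \ref{LIFT2EXTL} to the affine case settled in Proposition \ref{LIFT1EXT}, via a $G$-equivariant affine cover trivializing $L$ followed by a descent argument. The Frobenius pullback allowance is what should make the descent tractable.

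First, I would represent $c$ by a continuous $1$-cocycle $z \colon G \to H^0(S, \W_r(L))(1)$. Since $z$ is locally constant, it factors through a finite quotient $G/G_0$ and takes only finitely many values $s_1, \ldots, s_N$. Next, using that $S$ is a quasi-compact $G$-scheme, I would choose a finite family of $G$-invariant affine opens $U_\alpha = \Spec(B_\alpha)$ covering $S$ and trivializing $L$. Setting $T = \bigsqcup_\alpha U_\alpha = \Spec(B)$ with $B = \prod_\alpha B_\alpha$, I obtain a $G$-equivariant Zariski cover $f \colon T \to S$; the pullback $f^{*}L$ becomes a free $(G,B)$-line bundle, whose semilinear $G$-action is encoded by a cocycle $\alpha \colon G \to B^\times$, so that $H^0(T, \W_r(f^{*}L))$ identifies with $\W_r(B)$ equipped with a $G$-action involving multiplication by $\tau(\alpha(g))$.

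Then, I would introduce the finitely generated $(G,\F_p)$-subalgebra $A \subset B$ generated by all the Witt coordinates of the $s_i|_T$ together with the values of $\alpha$. The pulled-back cocycle $f^{*}(z)$ thereby arises from a class in a version of $H^1(G, \W_r(A)(1))$ carrying the twist by $\tau(\alpha)$. A mild extension of Proposition \ref{LIFT1EXT} to $(G,\W_r)$-line bundles over $\Spec(A)$ — the key point being that the proof runs through Teichm\"uller lifts, which are multiplicative by Proposition \ref{TeichWitt} and hence fully compatible with the twist $\tau(\alpha)$ — yields an integer $m$ such that $(\Frob^m)^{*}(f^{*}(c))$ lifts to a class in $H^1(G, H^0(T, \W_{1+e}(f^{*}L^{\otimes p^m}))(1))$.

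Finally, I would descend this lift from $T$ back to $S$. By the Zariski sheaf property of $\W_{1+e}(L^{\otimes p^m})$, it suffices to verify the gluing condition on the double intersection $T \times_S T$. I expect this descent to be the main obstacle: one must carefully track how the Teichm\"uller-based lifts and the permutation-module factorization from Theorem \ref{FIT} interact with the restriction maps to overlaps. The deliberate incorporation of the transition data $\alpha$ into the subalgebra $A$ is what should make the construction compatible with gluing, and the Frobenius pullback provides the essential flexibility for controlling higher Witt components so that the glued lift truly lies in $H^0(S, \W_{1+e}(L^{\otimes p^m}))$, completing the proof.
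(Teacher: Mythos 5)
Your proposal diverges fundamentally from the paper's argument, and the step you yourself flag as "the main obstacle" -- the descent from $T = \bigsqcup_\alpha U_\alpha$ back to $S$ -- is a genuine gap that the proposal leaves unresolved. The difficulty is structural, not technical: the restriction map
\[
H^0(S, \W_{1+e}(L^{\otimes p^m})) \longrightarrow H^0(T, \W_{1+e}(f^*L^{\otimes p^m}))
\]
is injective but far from surjective, so a lift of $(\Frob^m)^*(f^*(c))$ produced over $T$ by applying Proposition \ref{LIFT1EXT} has no reason to land in the image of $H^1(G, H^0(S,\cdot))$. The lifts coming from Theorem \ref{FIT} are constructed by choosing a factorization of $\Frob^m$ through a permutation module, and these choices are not canonical; nothing forces the resulting cocycle to satisfy the \v{C}ech compatibility one would need on $T \times_S T$. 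A second, more elementary, problem is the hypothesis that you can find $G$-invariant affine opens $U_\alpha$ which \emph{also} trivialize $L$: property $(\ast)$ in the definition of $G$-scheme guarantees a cover by $G$-invariant affine opens, but a line bundle need not be trivial on an affine open (the Picard group of an affine scheme can be nontrivial), and the opens that do trivialize $L$ need not be $G$-invariant. So the cocycle $\alpha \colon G \to B^\times$ your argument relies on may simply not exist.

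The paper sidesteps all of this by never covering $S$ at all. Instead it first reduces, by an induction on $r$ using the Teichm\"uller exact sequences of Proposition \ref{TeichWitt}, to the case $r=1$, and then works in the single $(\F_p,G)$-algebra $A := \bigoplus_{i \in \Z} H^0(S, L^{\otimes i})$ (the regular functions on the $\G_m$-torsor associated to $L$). The class $c$, viewed in degree $1$, takes finitely many values, which generate a finite-type sub-$(\F_p,G)$-algebra $A' \subset A$; Theorem \ref{FIT} then factors $\Frob^m$ on $A'$ through a permutation module $\F_p^{(X)}$, and projecting onto the degree $p^m$ component of $A$ gives a $G$-equivariant arrow $\phi: \F_p^{(X)} \to H^0(S, L^{\otimes p^m})$ whose image contains the twisted values $z_g^{p^m}$. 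From there Shapiro's Lemma reduces to a single orbit $X = \{*\}$, the element $a := \phi([*])$ has a Teichm\"uller lift $a_{1+e} \in H^0(S, \W_{1+e}(L^{\otimes p^m}))$, and the cyclotomic hypothesis closes the argument by a diagram chase. The point is that $A$ is intrinsically attached to the pair $(S,L)$ -- it packages the "transition data" you tried to feed into your cover algebra, but does so canonically, so that no gluing ever has to be checked. Your instinct that "the deliberate incorporation of the transition data $\alpha$" is what should make things work is essentially the right idea, but the homogeneous coordinate ring is the correct way to implement it, and it removes the descent step entirely rather than making it tractable.
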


\begin{proof}
By Proposition \ref{TeichWitt}, we have a commutative diagram, with exact rows
\[ \xymatrix@C-=0.5cm{ 0 \ar[r] & H^0(S,\W_{2+i}(L^{\otimes p^r})) \ar[r] \ar[d] & H^0(S,\W_{r+2+i}(L)) \ar[r] \ar[d] & H^0(S,\W_{r+1+i}(L)) \ar[r] \ar[d] & 0 \\ 0 \ar[r] & H^0(S,\W_{1+i}(L^{\otimes p^r})) \ar[r] \ar[d] & H^0(S,\W_{r+1+i}(L)) \ar[r] \ar[d] & H^0(S,\W_{r+i}(L)) \ar[r] \ar[d] & 0 \\ & \myvdots \ar[d]&\myvdots \ar[d] & \myvdots \ar[d] & \\ 0 \ar[r] & H^0(S,L^{\otimes p^r}) \ar[r]^i & H^0(S,\W_{r+1}(L)) \ar[r]^\pi & H^0(S,\W_{r}(L)) \ar[r] & 0,} \]

where frobenius pushforwards are dismissed for clarity.

We work in the cyclotomic twist of this diagram, to which we apply $H^1(G,.)$, and mimic the proof of Proposition \ref{LIFT1EXT}. By induction on $r$, we assume the result known for a given $r\geq 1$, and for all $L$. Let $$c \in H^1(G, H^0_{}(S,\W_{r+1}(L))(1))$$ be a cohomology class. Then, there exists $m_1 \geq 1$ such that $$\pi_*((\frob^{m_1})^*(c)) \in H^1(G, H^0_{}(S,\W_{r}(L^{\otimes p^{m_1}}))(1))$$ admits a compatible system of liftings $(b_i)_{r \leq i \leq e+1}$. Replacing $L$ by $L^{\otimes p^{m_1}},$ we can assume that $m_1=1$. Replacing $c$ by $c-b_{r+1}$, we then reduce to the case where $\pi_*(c)=0$. Hence, there exists $$a \in H^1(G, H^0_{}(S,L^{\otimes p^r}(1)))$$ such that $i_*(a)=c$. If we can show that (a high enough frobenius twist of) $a$ lifts to $$H^1(G,H^0(S,\W_{e+1-r}(L^{\otimes p^r})) )$$ (with respect to the line bundle $L^{\otimes p^r}$), then we are done, by commutativity of the diagram above.

Thus, only the case $r=1$ remains to be considered. Put \[ A:=\bigoplus_{ i \in \Z} H^0_{}(S, L^{\otimes i}) ; \] the $(\F_p,G)$-algebra of regular functions on the $\G_m$-torsor associated to $L$. As usual, the class $c \in H^1(G, H^0_{}(S,L)(1))$ is defined by a cocycle taking only finitely many values. Let $A' \subset A$ be the sub-$(\F_p,G)$-algebra generated by these values; it is an $\F_p$-algebra of finite-type. Casting Theorem \ref{FIT} again, we get an integer $m \geq 0$ and a factorization \[ \frob^m: A' \stackrel f \to \F_p^{(X)}\stackrel g \to A',\] where $X$ is a $G$-set. Consider the composite \[\phi: \F_p^{(X)}\stackrel g \to A' \stackrel {\subset} \to A \stackrel {\mathrm{pr}_m} \to H^0_{}(S,L^{\otimes p^m}),\] where $\mathrm{pr}_m$ is the natural projection. We are now reduced to showing that classes in the image of \[\phi(1)_*: H^1(G, \F_p^{(X)}(1)) \to H^1(G,H^0_{}(S,L^{\otimes p^m}(1))) \] lift to 
$H^1(G,H^0_{}(S,\W_{1+e}(L)^{\otimes p^m}(1)))$.\\
Note that the stabilizers of elements of $X$ are open subgroups of $G$. By definition of a cyclotomic pair, and using Shapiro's Lemma, we can  thus replace $G$ by each of these stabilizers. In short,  we can assume that $X=\{ *\}$ is a one-element set.\\Put $$a := \mathrm{pr}_m(g([*]) )\in H^0_{}(S,L^{\otimes p^m}).$$ For all $s \geq 1$, denote by $$a_s:= \tau_s(a) \in H^0_{}(S,\W_r(L^{\otimes p^m}))$$ the Teichm\"uller lift of $a=a_1.$ We conclude by a chase in the diagram \[\xymatrix @C=1pc { (\Z/p^{e+1} \Z )(1) \ar[d]^{1 \mapsto a_{1+e}} \ar[r] & (\Z/p \Z )(1) \ar[d]^{1 \mapsto a} \\ H^0_{}(S,\W_{e+1}(L^{\otimes p^m})(1)) \ar[r] & H^0_{}(S,L^{\otimes p^m}(1)) , } \] to which we apply the functor $H^1(G,\cdot)$-- remembering the definition of a cyclotomic pair.
\end{proof}

\section{Cyclothymic profinite groups and $(n,1)$-smoothness}

 The following notion is to be thought as a smooth profinite group $G$, with a moving cyclotomic character. It makes sense in arbitrary depth $e \geq 1$, but for simplicity, we assume $e=1$ throughout.

\begin{defi}(Cyclothymic profinite group.) \label{DefiCyclothymic}\\
Let $n\geq 1$, and let $G$ be a profinite group.\\
We say that $G$ is $(n,1)$-cyclothymic if the following holds.\\
Let $L$ be an $(\F_p,G)$-module, of dimension one as an $\F_p$-vector space. \\Consider a finite collection $$H_1,\ldots,H_N \subset G$$ of open subgroups of $G$ and for each $i=1,\ldots,N$, let $c_i \in H^n(H_i,L)$ be a cohomology class. Set $C:=(c_1,\ldots,c_N)$. \\Then, there exists a lift of $L$, to a $(\Z/p^2,G)$-module  $L_2[C]$, free of rank one as a $\Z/p^2$-module, such that, for each $i=1,\ldots,N$, the class $c_i$ lifts through $$H^n(H_i,L_2[C]) \to H^n(H_i,L).$$ 
\end{defi}
\begin{rem}
The point here is that $L_2[C]$ \textit{depends} on $C$.
\end{rem}

\begin{rem}
In the definition,  we can assume w.l.o.g. that $L=\F_p$ has the trivial action of $G$. Indeed, let  $G_L \subset G$  be the kernel of the action on $L$. As its index  it is prime-to-$p$,  we may replace $H_i$ by $H_i \cap G_L$-- using  a restriction-corestriction argument. Working out details is left to the interested reader.
\end{rem}

\begin{rem}
    If $(G,\Z/p^2(1))$ is an $(n,1)$-cyclotomic pair, then $G$ is $(n,1)$-cyclothymic. Indeed, by the preceding Remark, one may assume w.l.o.g. $L=\F_p=\F_p(1)$, so that one may take $L_2[C]:=\Z/p^2(n),$ independently of $C$. 

\end{rem}
Before investigating relations between cyclotomic pairs, cyclothymic and smooth profinite groups, we state the following cyclothymic variant of Theorem A. 

\begin{thm}[Cyclothymic version of Theorem A]\label{WeakLiftCyThy} \hfill\\ 
Pick  $n \in \N$, and let $G$ be a $(n,1)$-cyclothymic profinite group.\\
 Let $S$ be a $(G,\F_p)$-scheme, and $L$ be a $G$-linearized line bundle over $S$. Consider a strongly geometrically trivial class $$c \in H^n((G,S),L).$$ There exists $m \geq 0$, and a lift of the trivial $(\F_p,G)$-module $\F_p$, to a $(\Z/p^2,G)$-module $\Z/p^2[c]$,  free of rank one as a $\Z/p^2$-module,  such that $c^{(m)}$ lifts  via 
\[ H^n_{sgt}((G,S),\W_2(L^{\otimes p^m})\otimes_{\Z}\Z/p^2[c]) \to H_{sgt}^n((G,S),L^{\otimes p^m}). \] 
In particular, the group $G$ is $(n,1)$-smooth  (take for $S$  a perfect affine scheme).
 \end{thm}

\begin{proof}
The proof follows the same lines as the one of Theorem A. To understand why, the key is that, in the proof of Theorem A for $r=1$, it suffices to lift a \emph{finite} number  $N$ of classes $c_i \in H^1(H_i,\F_p)$, where $H_i \subset G$ are open subgroups. These $H_i$'s occur as the stabilizers of elements of the $G$-set $X$, given by Theorem \ref{FIT}.  The coefficients module used to lift the $c_i$'s  is of little importance--provided it is a $(\Z/p^2,G)$-module, free of rank one as a $\Z/p^2$-module. For this purpose, setting $C:=(c_1,\ldots,c_N)$, the module $\Z/p^2[c]:=L_2[C]$ of Definition \ref{DefiCyclothymic} does the job,  in place of $\Z/p^2(n)$ used in the cyclotomic case.
\end{proof}

We  get to the main result of this section.

\begin{thm}\label{Smooth=Cyclothymic}
Let $G$ be a profinite group. Pick $n\geq 1$.\\
Then, $G$ is $(n,1)$-cyclothymic if and only if it is $(n,1)$-smooth.
\end{thm}

\begin{proof}
The first implication is contained in Theorem \ref{WeakLiftCyThy}.\\
 We prove the converse implication when $n=1$, the general case being identical. Let $H_1,\ldots,H_k \subset G$ be open subgroups, and let $$\chi=(\chi_1,...,\chi_k)\in \prod_{i=1}^k H^1(H_i,\mathbb{F}_p)$$ be cohomology classes (characters). Introduce $$A:=\mathbb{F}_p[X_{i,c}],$$ the polynomial algebra on $d=\sum_{i=1}^k \vert G/H_i \vert$ variables, indexed by $i=1,\ldots,k$ and $c \in G/H_i$. For each fixed $i$, the group $G$ naturally permutes the variables $X_{i,c}$, $c\in G/H_i$, allowing to view $A$ as an $(\F_p,G)$-algebra.
 
 Using Shapiro's Lemma, the $\chi_i$'s give rise to $1$-cocycles $$\xi_i: G \to \bigoplus_{c \in G/H_i} \F_p X_{i,c}$$ depending, up to a coboundary, on the choice of a system of representatives of the factor set $G/H_i$. We then form the $1$-cocycle $$ \xi:=\sum_{i=1}^k \xi_i: G \longrightarrow A.$$
As $G$ is $(1,1)$-smooth, there is an integer $m$ and a lift of the $(A,G)$-module $A$, to a $(\W_{2}(A),G)$-module $$\W_{2}(A)(\xi^{(m)}),$$ free of rank one as a $\W_{2}(A)$-module, such that $\xi^{(m)}$ lifts to $H^1(G,\W_{2}(A)(\xi^{(m)}))$. Consider the extension of $(\W_{2}(A),G)$-modules $$ \mathcal E: 0 \to A \to \W_2(A) (\xi^{(m)}) \stackrel \pi \to A \to 0. $$ 
Let $i=1,\ldots k$ be an integer. Let $X^{\alpha} \in A$ be a pure monomial, in the variables $X_{i,c}$. The inclusion \[ \iota_{ \alpha}: \F_p X^{\alpha} \to A,\] is then naturally split by the projection \[ \epsilon_{\alpha}: A \to \F_p X^{\alpha}.\] Setting $H_\alpha \subset G$ to be the stabilizer of $\alpha$, it is clear that these arrows are $H_\alpha$-equivariant.\\ If $X^{\beta} \in A$ is another pure monomial, we can form the extension of
 $\Z/p^2$-modules $$ \mathcal F_{\alpha,\beta}:=(\epsilon_{\beta})_*(\iota_{\alpha}^*(\mathcal E)) : 0 \to \F_p X^\beta \to F_{\alpha,\beta} \to \F_p X^\alpha \to 0. $$
We are going to describe its middle term $ F_{\alpha,\beta}$. To do so, consider the commutative diagram of $\Z/p^2$-modules
 
 \[ \xymatrix{\iota_{0}^*(\mathcal E):0 \ar[r] & A \ar[r] \ar[d]^{X^{p\alpha}.} & E_0 \ar[r] \ar[d]^{ \tau_2(X^\alpha).} 
& \F_p \ar[r] \ar[d]^{ X^\alpha.} & 0 \\ \iota_{\alpha}^* (\mathcal E): 0 \ar[r] & A \ar[r] & E_{\alpha} \ar[r] & \F_p X^{\alpha}\ar[r] & 0, }\] where $\tau_2(.) \in \W_2(A)$ denotes the multiplicative representative. We infer a natural isomorphism of extensions $\Z/p^2$-modules $$\mathcal F_{\alpha,\beta} \simeq (\epsilon_{\beta}( X^{p\alpha}.))_*(\iota_{0}^*(\mathcal E)).$$The arrow $\epsilon_{\beta}( X^{p\alpha}.)$ vanishes if $p\alpha$ does not divide $\beta$. In that case, we deduce that $\mathcal F_{\alpha,\beta} $ has a canonical splitting. In particular, it is a trivial extension of $(\F_p,H_{\alpha} \cap H_{\beta})$-modules.

If $\beta=p\alpha$, the  arrow $\epsilon_{\beta}( X^{p\alpha}.)$ factors through $\epsilon_0$, yielding a canonical isomorphism $$ \mathcal F_{\alpha,{p\alpha}}\simeq \mathcal F_{0,0}. $$As a $\Z/p^2$-module, $F_{0,0}$ is free of rank one. We put $$\Z/p^2[\xi]:=F_{0,0}.$$
If $\beta\neq p\alpha$ and $p\alpha$ divides $\beta$, the extension $\mathcal F_{\alpha,\beta} $ is an extension of $(\F_p,H_{\alpha} \cap H_{\beta})$-modules, which may be non-trivial.

For $i=1,\ldots,k$, denote by $$\epsilon_i: A \to \bigoplus_{c \in G/H_i} \F_p X_{i,c}^{p^{m+1}}$$ the projection, i.e. the sum of all arrows $\epsilon_{X_{i,c}^{p^{m+1}}}$. Similarly, consider $$\iota_i: \bigoplus_{c \in G/H_i} \F_p X_{i,c}^{p^{m}} \to A.$$ The arrows $\epsilon_i$ and $\alpha_i$ are $G$-equivariant. Form the extension of
 $(\Z/p^2,G)$-modules $$ \mathcal E_{i,j}:=(\epsilon_{j})_*(\iota_{i}^*(\mathcal E)) : 0 \to \bigoplus_{c \in G/H_j} \F_p X_{j,c}^{p^{m+1}} \to E_{i,j} \to \bigoplus_{c \in G/H_i} \F_p X_{i,c}^{p^{m}} \to 0. $$
If $i \neq j$, from what precedes, we get that $ \mathcal E_{i,j}$ has a (canonical, hence $G$-equivariant) splitting. Indeed,  no $X_{i,c}^{p^{m}} $ divides a $X_{j,c'}^{p^{m+1}} $. Similarly, $ \mathcal E_{i,i}$ is canonically isomorphic to $$ \mathcal F_{0,0}^{G/H_i} : 0 \to  \F_p^{G/H_i}\to \Z/p^2[\xi]^{G/H_i} \to \F_p^{G/H_i} \to 0. $$
Since $\xi^{(m)}$ lifts via (the map induced on $H^1(G,.)$ by) the surjection $\pi$ of $\mathcal E$, we deduce that it also lifts via the surjection of the extension of   $(\Z/p^2,G)$-modules $$\mathcal E':=\bigoplus_{i,j} \mathcal E_{i,j}, $$reading as $$\mathcal E': 0 \to \bigoplus_{j=1,\ldots,k; \hspace{2pt} c \in G/H_j} \F_p X_{j,c}^{p^{m+1}} \to E' \to \bigoplus_{i=1,\ldots,k; \hspace{2pt} c \in G/H_i} \F_p X_{i,c}^{p^{m}} \to 0. $$
We have  shown before that $$\mathcal E'=\bigoplus_{i} \mathcal E_{i,i}$$ is "diagonal", therefore each $\xi_i^{(m)}$ lifts via (the surjection of) $$ \mathcal E_{i,i} = \mathcal F_{0,0}^{G/H_i}. $$Equivalently, using Shapiro's Lemma, we conclude that each $\chi_i$ lifts to $H^1(H_i,\Z/p^2[\xi])$. 
\end{proof}

The following connections provided in this article will be used in the third part of this series, in the proof of the smoothness theorem.

\vspace{0.5cm}
{\centering\noindent\makebox[-5pt]

\begin{tikzpicture}[thick]
\node[draw,rectangle,align=center,right of=k] (a) at (4.5,0) {$(G,\mathbb{Z}/p^{2}(1))$ is a $(n,1)$-cyclotomic pair};
\node[draw,rectangle] (d) at (9.7,-1.5) {$G$ is $(n,1)$-smooth};
\node[] (d') at (10.7,-0.55) {\small{Theorem A}};
\node[draw,rectangle,align=center,below of=k] (c) at (1.25,-0.5) {$G$ is $(n,1)$-cyclothymic};
\node[] (c') at (0.1,-0.55) {\small{Remark 12.4}};
\node[] (x) at (5.6,-1.2) {\small{Theorems 12.5 and 12.6}};

  \draw[vec] (a) to (0.948,0);
  \draw[vecArrow] (1.25,0.025) to (c);
  \draw[vec] (a) to (10.002,0);
  \draw[vecArrow] (9.7,0.025) to (d);
  \draw[vecArrow] (5,-1.50) to (8.17,-1.50);
  \draw[vecArrow] (7,-1.5) to (3.13,-1.5);
  \draw[innerWhite] (1.22,0) to (2.45,0);
  \draw[innerWhite] (8.7,0) to (9.883,0);
\end{tikzpicture}
}

\section{Necessity of the perfectness assumption.}\label{secperfnec}
We assume $n=e=1$ in this section.
\begin{qu}
  In Theorem A,  is the frobenius twist $(.)^{(m)}$ necessary?
\end{qu} The answer is `yes, by all means!', as  explained next.

\begin{defi}\label{deficovid}
    Let $G$ be a profinite group. Say that \begin{center}
     ``the cohomology of every  $(\F_p,G)$-module $V$ is induced from dimension 1''    
    \end{center}(relative to the prime $p$) if the following property holds.\\
    For every  $(\F_p,G)$-bundle $V$, the surjection of $(\F_p,G)$-bundles 
    $$\begin{array}{ccc}
 \F_p[V] & \overset{\sigma}{\longrightarrow} & V \\
 \sum_{v \in V} a_v [v] & \longmapsto & \sum_{v \in V} a_v v \end{array}$$
induces a surjection \[ H^1(G,\F_p[V]) \xrightarrow{H^1(\sigma)} H^1(G,V).\]

    In short, if that property  holds, we say that $G$ has covid1.
\end{defi}

\begin{rem}
    In Definition \ref{deficovid}, the $(\F_p,G)$-module $\F_p[V]$ is permutation, so that by Shapiro's Lemma, $H^1(G,\F_p[V])$ is isomorphic to a finite direct sum of groups  $H^1(H_i,\F_p)$. [Precisely, the open subgroups $H_i \subset G$ range through the stabilisers of a system of representatives of the orbit space $V/G$.] This justifies the terminology `induced from dimension 1'.
\end{rem}
\begin{exo}
    Show that $G$ has covid1, if and only if the following holds.  For every  $(\F_p,G)$-bundle $V$, and for every $c \in H^1(G,V)$, there exists a permutation  $(\F_p,G)$-bundle $P$, and a morphism of $(\F_p,G)$-bundles $P \xrightarrow{s} G$, such that $c$ belongs to the image of $H^1(G,P) \xrightarrow{H^1(s)} H^1(G,V)$.
\end{exo}
\begin{rem}\label{RemcovH}
    If $G$ has covid1, so does every open subgroup $H \subset G$. This is  straightforward using Shapiro's Lemma, yielding, for every  $(\F_p,H)$-module  $V$, a natural isomorphism $H^1(H,V)=H^1(G, \Ind_H ^G(V))$.

\end{rem}

\begin{prop}\label{propnoperfcov}
   Let $G$ be a $(1,1)$-smooth profinite group. Assume that Theorem A (for $n=e=1$) holds with $m=0$, i.e. that  no frobenius twist is needed, prior to lifting. Then  $G$ has covid1.
\end{prop}

\begin{dem}
   Assume that Theorem A holds  with $m=0$. Let $V$ be a $d$-dimensional  $(\F_p,G)$-bundle. Let us show by induction on $d$, that the arrow $H^1(\sigma)$ of Definition \ref{deficovid} is surjective. This is obvious if $d=1$. Assume surjectivity holds in dimension $d-1$. Pick $c \in H^1(G,V)$.  Introduce the  $(G,\F_p)$-algebra $A:=\Sym_{\F_p}^*(V)$, so that $\Spec(A)=\A_{\F_p}(V^\vee)$. There is the tautological $A$-linear map \[ f: V \otimes_{\F_p} A \longrightarrow A.\] Define the class $(c \otimes 1) \in H^1(G, V \otimes_{\F_p} A$), and \[ C:=f_*(c \otimes 1) \in H^1(G,A).\]  Consider the natural reduction extension \[\mathcal R: 0 \longrightarrow A \xrightarrow{\ver} \W_2(A) \overset{\rho}{\longrightarrow} A \longrightarrow 0.\]  By  assumption, $C$ lifts  via $\rho_*$ (without the need for a frobenius twist).  Apply the following operations to the extension $\mathcal R$: push-forward  (on the left) with respect to the projection $A \longrightarrow \Sym_{\F_p} ^p(V)$, and pull-back (on the right) with respect to the inclusion $V=\Sym_{\F_p} ^1(V) \overset{\iota}{\longrightarrow} A$. The resulting extension reads as \[ 0 \longrightarrow \Sym_{\F_p} ^p(V)  \longrightarrow \Gamma^p_{\Z/p^2}(V) \overset{\phi}{\longrightarrow} V \longrightarrow 0,\] where the middle term is the $p$-th divided power of $V$, \textit{regarded as a $(\Z/p^2)$-module.} The map $\phi$ is given, on pure symbols, by $\phi([v]_p)=v$.  Such phenomena were studied systematically in the preprint \cite{DCFOr}.  The definition of $C=\iota_*(c)$, combined with a diagram chase, shows that $c$ lifts via $\phi_*$. Observe that $\phi$ factors as \[  \Gamma^p_{\Z/p^2}(V) \xrightarrow{can}  \Gamma^p_{\F_p}(V) \xrightarrow{F_V} V.\] As a consequence, $c$ lifts via $(F_V)_*$, to an element of $H^1(G,  \Gamma^p_{\F_p}(V))$. By Lemma \ref{lemTFH} (for $k=\F_p$), $F_V$ factors as \[ \Gamma_{\F_p}^p(V) \xrightarrow{T_V} \bigoplus_{H \subset V} H \xrightarrow{\nat} V,\] where the direct sum is over all $\F_p$-hyperplanes of $V$ (in other words, over all $\F_p$-rational points of  $\P(V)$), where $T_V$ is the sum of all transfers $T_{H,V}$, and  where $\nat$ is the sum of the inclusions $H \subset V$. Observe that this is a factorization in the category of $(\F_p,G)$-bundles:  the middle term is a direct sum of induced $(\F_p,G)$-bundles, and the arrows $T_V$, $\nat$ are clearly $G$-equivariant. \\
Since $c$ lifts via $(F_V)_*$, \textit{a fortiori} it lifts via $\nat_*$. By Shapiro's Lemma, the cohomology group $H^1(G,  \bigoplus_{H \subset V} H)$ is a direct sum of groups of the shape $H^1(G_i, H_i)$,  where the $H_i$'s run through a system of representatives of the coset space $\P(V)(\F_p)/G$, and $G_i:=\mathrm{Stab}(H_i) \subset G$.  In that way, to show that  $ H^1(G,V)$ is induced from dimension one, it suffices to show that each  $H^1(G_i, H_i)$ has that property, too. As open subgroups of $G$, the $G_i$'s  still satisfy the `no frobenius' stronger form of Theorem A.  Conclude using the induction assumption.\end{dem}

\begin{prop}\label{propcovfree}
    Assume that $p$ is odd. Let    $G$  be  any profinite group.\\ Then $G$ has covid1 if and only if the $p$-cohomological dimension  of $G$ is  $\leq 1$.
\end{prop}
\begin{dem} 

 If  the $p$-cohomological dimension  of $G$ is  $\leq 1$, then $G$ has covid1  (use that $H^2(G,\Ker(\sigma))=0$ in definition \ref{deficovid}). Let us prove the converse.  Observe that, since $p$ is odd, the cup-product \[H^1(G,\F_p) \times H^1(G,\F_p) \longrightarrow  H^2(G,\F_p) \] is alternating. We need to show that, for every open subgroup $H \subset G$, the group $H^2(H,\F_p)$ vanishes.  Thanks to Remark \ref{RemcovH}, it suffices to prove this for $H=G$. The  first step is to show that, for every $e_0,e_1 \in H^1(G,\F_p)$, \[e_0 \cup e_1 \stackrel ? = 0 \in H^2(G,\F_p).\] If $e_0, e_1$ are $\F_p$-collinear,  this holds because the cup-product is alternating.  Otherwise, set $H_i:=\Ker(e_i)$ and $H:=H_0 \cap H_1$. Observe that $G/H_0 \simeq G/H_1 \simeq \Z/p$, and $G/H \simeq (\Z/p)^2$. Let $H_0,H_1,\ldots, H_p$ denote the $p+1$ subgroups, such that $H \subsetneq H_i \subsetneq G$.  Pick $e_2, \ldots, e_p \in H^1(G, \F_p)$, such that $H_i=\Ker(e_i)$. Define the exact sequence of $(\F_p,G)$-bundles \[(E)= 0 \longrightarrow  \F_p \overset{\iota}{\longrightarrow} \bigoplus_{i=0}^p \F_p[G/H_i] \longrightarrow V \longrightarrow 0,\] by requiring that $\iota$ is the direct sum of the norms 
 $$\fonction{N_i}{ \F_p}{ \F_p[G/H_i]}{1}{\sum_{x \in G/H_i} [x]}$$
Denote by $\beta_E$ the connecting arrow of $(E)$ in cohomology.

 By Shapiro's Lemma, for $i=0,\ldots,p$, \[(N_i)_*(e_0 \cup e_1)= \Res_{H_i}(e_0) \cup\Res_{H_i}(e_1)  \in H^2(H_i,F_p).\] One readily checks, that $\Res_{H_i}(e_0), \Res_{H_i}(e_1)  \in H^1(H_i,F_p)$ are collinear, so that $(N_i)_*(e_0 \cup e_1)=0$. Consequently, $\iota_*(e_0 \cup e_1)=0$, and  there exists $c \in H^1(G,V)$, with $\beta_E(c)=e_0 \cup e_1$. Let us form the pull-back diagram

 \[ \xymatrix{ (F): 0 \ar[r]   &  \F_p \ar[r] \ar@{=}[d] &  \ast \ar[r] \ar[d] & \F_p[V]  \ar[r]  \ar[d]^\sigma & 0 \\ (E): 0 \ar[r]   &  \F_p \ar[r]^-\iota  &   \bigoplus_{i=0}^p \F_p[G/H_i] \ar[r]^-\pi  & V  \ar[r]  & 0. } \] Since $G$ has covid1, $c \in \Im(H^1(\sigma))$. By a straightforward chase, one sees that \[(e_0 \cup e_1) \in \Im( H^1(G, \F_p[V]) \xrightarrow{\beta_F} H^2(G,\F_p)). \] Vanishing of $(e_0 \cup e_1)$ will follow, if we show that the extension of $(\F_p,G)$-modules $(F)$ \textit{splits}. Because the $(\F_p,G)$-module $ \F_p[V]$ is permutation,  Shapiro's Lemma implies that   $(F)$  splits, if and only if the following property holds. For every open subgroup $K \subset G$, the natural arrow \[ H^0(K, \bigoplus_{i=0}^p \F_p[G/H_i] ) \xrightarrow{H^0(\pi)}   H^0(K, V) \] is \textit{surjective}. [This is also equivalent, to the existence of a \textit{set-theoretic} $G$-equivariant splitting of $\pi$.]\\
 Let us show an equivalent property: for every open subgroup $K \subset G$, the  arrow \[  H^1(K, \F_p) \xrightarrow{H^1(\iota)}  H^1(K, \bigoplus_{i=0}^p \F_p[G/H_i] ) \] is \textit{injective}.  Fix such a $K$. By Shapiro's Lemma again (and using that the $H_i$'s are normal in $G$), $\Ker(H^1(\iota))$  is  the intersection of the kernels of the restrictions \[  H^1(K, \F_p) \xrightarrow{\Res_{K \cap H_i}^K}   H^1(K \cap H_i,\F_p)  .\] If $K \subset H_i$ for some $i$,  then clearly $\Ker(H^1(\iota))=1$. Otherwise,  all $(p+1)$ normal subgroups $K \cap H_i \subsetneq K$ are of index $p$. In that case, replacing $G$ by $K$ and $H_i$ by $ K \cap H_i $, one may assume w.l.o.g. that $K=G$.  Then, the kernel of $\Res_{ H_i}^G$ is the one-dimensional $\F_p$-subspace $\F_p e_i$. The intersection of (any two of) these is trivial, yielding the sought-for injectivity.\\
 We have shown that the cup-product   \[H^1(G,\F_p) \times H^1(G,\F_p) \longrightarrow  H^2(G,\F_p) \] identically vanishes. Thanks to Remark \ref{RemcovH},  the same holds, upon replacing $G$ by an  open subgroup  $K \subset G$.  To  conclude, reset notation and let $c \in H^2(G, \F_p)$ be an arbitrary class.   Pick an exact sequence of $(\F_p,G)$-bundles \[(E)= 0 \longrightarrow  \F_p \overset{\iota}{\longrightarrow} Z \longrightarrow V \longrightarrow 0,\]  such that  $H^2(\iota)$ kills $c$. Form the pull-back diagram
 
 \[ \xymatrix{ (F): 0 \ar[r]   &  \F_p \ar[r] \ar@{=}[d] &  \ast \ar[r] \ar[d] & \F_p[V]  \ar[r]  \ar[d]^\sigma & 0 \\ (E): 0 \ar[r]   &  \F_p \ar[r]^-\iota  &  Z \ar[r]  & V  \ar[r]  & 0. } \]  Arguing as before, one sees that $c\in \Im (\beta_F: H^1(G,\F_p[V]) \to H^2(G,\F_p))$.\\ By Shapiro's Lemma, this image is spanned by elements of the shape \[\Cor_K ^G ( a \cup b ) ,\] for (various) open subgroups $K \subset G$, and elements $a,b \in H^1(K,\F_p)$. By the above, such cup-products vanish. The proof is complete.
\end{dem}

\begin{rem}
 If $p=2$, the equivalence of Proposition \ref{propcovfree} is false. A  counter-example is $\Z/2$, that has covid1 (because every $(\F_2,\Z/2)$-module is permutation), but  infinite $2$-cohomological dimension. However, the equivalence then holds, under the  assumption that the pair $(G,\Z/4)$ is $(1,1)$-cyclotomic. Indeed,  this ensures that the cup-product pairing is  alternating (see Exercise \ref{exopurep}), and the same proof goes through.
\end{rem}
Combining Propositions \ref{propnoperfcov} and \ref{propcovfree}, one thus gets:
\begin{prop}
  Let $p$ be odd, and let $G$ be an absolute Galois group, whose $p$-cohomological dimension is $\geq 2$. Then, in Theorem A, the   frobenius twist $(.)^{(m)}$ cannot be dismissed.
\end{prop} 

Let $G$ be a `reasonable' $(1,1)$-smooth profinite group. It may be interesting to  study the `growth rate' of the minimal value of $m$, in Theorem A. [Of course, the first step would be to clarify the meaning of `reasonable' and `growth rate'.] \\ Demushkin groups may be the case to start with.

\section*{Appendix: variations on $(n,1)$-smoothness}

In  this appendix, we provide some equivalent definitions of smoothness, which will be used in the next two articles of this series. First, we observe that the perfectness assumption on $A$, appearing in the definition of $(n,e)$-smoothness (Definition \ref{DefiSmooth}), can be removed if $e< \infty$, at the cost of introducing frobenius twists. This formally follows from the existence of the perfection $$A^{perf}:=\varinjlim_n A_n,$$ where $A_n=A$ for all $n$, transition morphisms being $\frob$, for any $\F_p$-algebra $A$, and from the isomorphisms $$\varinjlim_n \W_{1+e}(A_n) \overset{\sim}{\to} \W_{1+e}(A^{perf})$$ for $e< \infty$, and from the commutation between cohomology and direct limits. We thus get another equivalent  definition, for smooth profinite groups of finite depth.

\begin{defi}\label{DefiSmooth2}
Let $n\geq 1$ and $e\in \mathbb{N}$. A profinite group $G$ is $(n,e)$-smooth if and only if the following holds. Let $A$ be an $(\F_p,G)$-algebra and let $L_1$ be a locally free $A$-module of rank one, equipped with a semi-linear action of $G$. Let $$c \in H^n(G,L_1)$$ be a cohomology class. Then, there exists  $m \geq 0$ with the following property.

There exists a lift of $L_1^{(m)}$, to a $(\W_{e+1}(A),G)$-module $L_{e+1}^{[m]}[c]$, invertible as a $\W_{e+1}(A)$-module (and depending on $c$), such that $\frob^m(c)$ belongs to the image of the natural map \[H^n(G,L_{e+1}^{[m]} [c]) \to H^n(G,L_1^{(m)}). \]
\end{defi}
Another alternative definition of smooth profinite groups, is in terms of liftability of one-dimensional $G$-affine spaces (see section 4).

\begin{defi}[$(1,e)$-smooth profinite group, another equivalent definition]
Let $e\geq 1$ be an integer. A profinite group $G$ is $(1,e)$-smooth if and only if the following lifting property holds.\\Let $A$ be a perfect $(\mathbb{F}_p,G)$-algebra and let $X_1$ be a  $G$-affine space over $A$, such that $\overrightarrow{X_1}$ is an invertible $A$-module.\\
Then, $X_1$ admits a lift to a $G$-affine space $X_{1+e}$ over $\W_{1+e}(A)$, such that $\overrightarrow{X_{1+e}}$ is an invertible $\W_{1+e}(A)$-module. 
 \end{defi}

We  move on to some precisions about smooth profinite groups, in depth $1$.  These are not needed in \cite{F2} and \cite{DCF3}. Working out details is left to an interested reader.

\begin{prop}\label{propfree}
If $n=e=1$, we can add in Definition \ref{DefiSmooth} the extra requirement that $L_1=A$ is trivial, as a $(G,A)$-module. Therefore, $L_{2}[c]$ is geometrically isomorphic to $\W_{2}(A)$.
\end{prop}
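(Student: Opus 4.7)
The proposition splits into two claims: first, that it suffices to check Definition \ref{DefiSmooth} when $L_1 = A$ is the trivial $(G,A)$-line bundle; second, that in this case the lift $L_2[c]$ is automatically free of rank one as a $\W_2(A)$-module. The second claim is a matter of Nakayama. By the restricted definition, $L_2[c]$ is an invertible $\W_2(A)$-module whose mod-$p$ reduction is the trivial $(G,A)$-line bundle $A$. Picking any global preimage $\tilde 1 \in L_2[c]$ of $1 \in A$, the $\W_2(A)$-linear map $\W_2(A) \to L_2[c]$ sending $1$ to $\tilde 1$ reduces mod $p$ to the identity on $A$, hence is an isomorphism at every prime of $\W_2(A)$ (Nakayama applied to the nilpotent ideal $pA \subset \W_2(A)$), and therefore is a global isomorphism of $\W_2(A)$-modules. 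The action of $G$ on $L_2[c]$, transported through this isomorphism, need not coincide with the natural one on $\W_2(A)$.

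The substance of the proposition lies in the first claim. My plan is to route it through the notion of cyclothymic profinite group (Definition \ref{DefiCyclothymic}). Suppose $G$ satisfies the lifting condition in Definition \ref{DefiSmooth} only when $L_1 = A$. First, by the perfectization $A \mapsto A^{perf}$ together with the commutation of cohomology with filtered colimits (the argument introducing Definition \ref{DefiSmooth2}), this upgrades to the restricted form of Definition \ref{DefiSmooth2}: the same lifting property holds for every, not necessarily perfect, $(\F_p,G)$-algebra $A$, at the cost of a Frobenius pullback. Next, I would re-examine the proof of Theorem \ref{Smooth=Cyclothymic} in the direction "$(1,1)$-smooth $\Rightarrow$ $(1,1)$-cyclothymic". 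That proof invokes smoothness only for the polynomial algebra $A := \F_p[X_{i,c}]$ endowed with the trivial $(G,A)$-line bundle $L_1 = A$, applied to the cocycle $\xi = \sum_{i=1}^k \xi_i$ assembled from the given characters $\chi_i$ via Shapiro's lemma. Therefore the argument passes through under our restricted hypothesis, and we conclude that $G$ is $(1,1)$-cyclothymic. Finally, Theorem \ref{WeakLiftCyThy} gives the converse implication: $(1,1)$-cyclothymic profinite groups are $(1,1)$-smooth in the full sense of Definition \ref{DefiSmooth}. Composing these three steps recovers full smoothness from the restricted hypothesis.

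The main obstacle is the inspection step: one must carefully audit the proof of Theorem \ref{Smooth=Cyclothymic} and confirm that every invocation of the smoothness hypothesis there concerns the trivial line bundle $L_1 = A$, that is, that the auxiliary $(\W_2(A),G)$-module $\W_2(A)(\xi^{(m)})$ appearing in that proof is genuinely produced as a lift of $A$ itself and not of some character twist. Once this bookkeeping is confirmed, the perfectization step and the invocation of Theorem \ref{WeakLiftCyThy} are purely formal, and the whole reduction slots together into a short argument.
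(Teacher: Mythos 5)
Your proof is correct, but it takes a genuinely different route from the paper's. The paper proves this by the direct $\G_m$-torsor argument borrowed verbatim from the second half of the proof of Proposition~\ref{1SmoothGSch}: given a general line bundle $L_1$ and a class $c$, extend scalars along the $\G_m$-torsor $f\colon T:=\Spec\bigl(\bigoplus_{n\in\Z}L_1^{\otimes n}\bigr)\to\Spec(A)$, over which $L_1$ trivializes; apply the restricted hypothesis over $T$ (after a Frobenius twist, since $T$ is not perfect) to kill the lifting obstruction there; then retract the obstruction back to $A$ through the natural splitting of $\mathcal O_S\to f_*\mathcal O_T$ given by the degree-zero projection, and conclude by perfectness of $A$. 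Your route instead passes through cyclothymicity: perfectize the hypothesis to drop the perfectness constraint (the mechanism behind Definition~\ref{DefiSmooth2}), audit the smooth~$\Rightarrow$~cyclothymic direction of Theorem~\ref{Smooth=Cyclothymic} to check it only invokes smoothness for the trivial coefficient $L_1=A$ (which it does --- the module lifted there is explicitly ``a lift of the $(A,G)$-module $A$'', with $A=\F_p[X_{i,c}]$), and then close the loop with Theorem~\ref{WeakLiftCyThy}. Both work and neither is circular, since Theorems~\ref{Smooth=Cyclothymic}, \ref{WeakLiftCyThy} and~A never use Proposition~\ref{propfree}; the remark following the proposition acknowledges the cyclothymic detour as an alternative for~\ref{1SmoothGSch}. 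The $\G_m$-torsor argument is more self-contained, while yours makes explicit that cyclothymicity is precisely the ``trivial-coefficient'' reformulation of $(1,1)$-smoothness. Your Nakayama argument for freeness of $L_2[c]$ is fine and essentially matches what the paper uses implicitly; one small notational correction: the nilpotent ideal is $V(A)=\Ker\bigl(\W_2(A)\to A\bigr)$ (equal to $p\,\W_2(A)$ when $A$ is perfect), rather than ``$pA\subset\W_2(A)$''.
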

\begin{dem}
The argument is similar to that of section \ref{secproofAgen} (where Theorem A is proved in the general case); in short: extending scalars to  the $\G_m$-torsor associated to the line bundle $L_1$.
\end{dem}

One may define  $(1,1)$-smoothness,  in the tongue of embedding problems.

\begin{defi}($(1,1)$-smooth profinite group, equivalent Definition)\label{DefiSmooth5}\\
 Denote by $\mathbf S \subset\mathbf{GL}_2$ one of the following two algebraic subgroups: the Borel subgroup $\mathbf B_2$, consisting of invertible matrices $$\begin{pmatrix}
* & *\\
0 & * 
\end{pmatrix},$$ or its subgroup $\Aut_{\mathrm{Aff}}( \A^1)=\G_a \rtimes \G_m \subset \mathbf B_2,$ consisting of invertible matrices $$\begin{pmatrix}
1 & *\\
0 & * 
\end{pmatrix}.$$ A profinite group $G$ is $(1,1)$-smooth iff the following lifting property holds. \\
Let $A$ be a perfect $(\F_p,G)$-algebra. Then, the natural map \[H^1(G,\mathbf S(\W_{2}(A))) \to H^1(G,\mathbf S(A))\] is onto.\\

\end{defi}
Let us briefly justify,  that this is indeed equivalent to Definition \ref{DefiSmooth}. There, by   Proposition \ref{propfree}, we may assume that the $A$-module $L_1$ is free of rank one. We  treat the least obvious case $\mathbf S=\mathbf B_2$. The datum of a cohomology class $b \in H^1(G,\mathbf B_2(A))$ is equivalent to an (isomorphy class of) extension of $(G,A)$-modules \[ \mathcal E_1: 0 \to D_1 \to E_1 \to D_1' \to 0,\] where $D_1$ and $D_1'$ are free of rank one as $A$-modules. The class of the extension \[\mathcal F_1:= \mathcal E_1 \otimes_A (D_1')^{-1}: 0 \to D_1 \otimes_A (D_1')^{-1} \to F_1:=E_1 \otimes_A (D_1')^{-1} \to A\to 0\] is an element of $H^1(G,L_1)$, where $$L_1:= D_1 \otimes_A (D_1')^{-1}.$$ Lifting $b$ as requested, amounts to lifting $\mathcal E_1$ to an extension of $(G,\W_{2}(A))$-modules \[ \mathcal E_{2}: 0 \to D_{2} \to E_{2} \to D'_{2} \to 0,\]where $D_{2}$ and $D_{2}'$ are free of rank one as $\W_{2}(A)$-modules. This is equivalent to lifting $\mathcal F_1$ to an extension \[\mathcal F_{2}: 0 \to L_{2} \to F_{2} \to \W_{2}(A)\to 0,\] where the $(G,\W_{2}(A))$-module $ L_{2}(=D_2 \otimes_{\W_2(A)} (D_2')^{-1})$, free of rank one as a $\W_{2}(A)$-module, of course depends on $b$. This liftability is equivalent to that of Definition \ref{DefiSmooth}.

\begin{rem}
A profinite group is $(1,1)$-smooth  if and only if its pro-$p$-Sylow subgroups are $(1,1)$-smooth.
\end{rem}
For a $(1,1)$-cyclotomic pair, the lifting property is required for all open subgroups $H \subset G$. This is not needed for $(1,1)$-smooth profinite groups:
 \begin{lem}
 Let $G$ be a $(1,1)$-smooth profinite group. Then, every closed subgroup $H \subset G$ is $(1,1)$-smooth as well.
 \end{lem}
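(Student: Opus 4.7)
\textit{Proof proposal.} The plan is to lift from $H$-cohomology to $G$-cohomology by a Shapiro-style induction, apply the $(1,1)$-smoothness of $G$, and descend back to $H$. Let $B$ be a perfect $(\F_p,H)$-algebra, $L_1$ a $(B,H)$-line bundle and $c\in H^1(H,L_1)$. The $H$-actions on $B$ and $L_1$ are naive, hence factor through a finite quotient $H/H_1$, with $H_1$ open normal in $H$. Since $H$ carries the subspace topology from $G$, one can choose an open normal subgroup $G_0\trianglelefteq G$ satisfying $G_0\cap H\subset H_1$. Setting $\bar G:=G/G_0$ and $\bar H:=H/(G_0\cap H)\hookrightarrow \bar G$, both $B$ and $L_1$ become $\bar H$-objects, and everything in sight reduces to finite-group cohomology.

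Next, form the induced algebra $A:=\Ind_{\bar H}^{\bar G}(B)$ and the induced module $L_1^A:=\Ind_{\bar H}^{\bar G}(L_1)$. Choosing coset representatives identifies $A$ with $B^{[\bar G:\bar H]}$ as a ring (with $\bar G$ acting by a twisted permutation of factors), so $A$ is a perfect $(\F_p,G)$-algebra and $L_1^A$ a $G$-linearized invertible $A$-module. Shapiro's lemma in finite-group cohomology yields a canonical isomorphism $H^1(G,L_1^A)\simeq H^1(\bar H,L_1)= H^1(H,L_1)$; transporting $c$ through it produces a class $c^A\in H^1(G,L_1^A)$. Applying $(1,1)$-smoothness of $G$ to the triple $(A,L_1^A,c^A)$ furnishes a $(\W_2(A),G)$-line bundle $\mathcal L_2$ lifting $L_1^A$, together with a class $c_2^A\in H^1(G,\mathcal L_2)$ reducing to $c^A$.

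It remains to descend $\mathcal L_2$ back to $(\W_2(B),H)$. The key input is the identity $\W_2(A)=\W_2(B)^{[\bar G:\bar H]}$, a direct consequence of the polynomial formulas defining Witt vector operations. Any invertible $\W_2(A)$-module therefore decomposes as a product of invertible $\W_2(B)$-modules indexed by the cosets of $\bar H$ in $\bar G$, and a compatible $\bar G$-semilinear structure on such a product is entirely determined by the component at the trivial coset $\bar H$ together with its stabilizer $\bar H$-action. Define $L_2[c]$ to be this component: it is an invertible $(\W_2(B),H)$-module lifting $L_1$, and $\mathcal L_2\simeq \Ind_{\bar H}^{\bar G}(L_2[c])$. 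A final application of Shapiro identifies $H^1(G,\mathcal L_2)\simeq H^1(H,L_2[c])$, and the naturality of this isomorphism with respect to reduction modulo $p$ sends $c_2^A$ to a class $c_2\in H^1(H,L_2[c])$ lifting $c$. The main obstacle is this descent step: one must check that the $\W_2$-line bundle produced by $(1,1)$-smoothness of $G$ is automatically of induced form — equivalently, that the equivalence of categories between $(\bar H,B)$-line bundles and $(\bar G,A)$-line bundles extends verbatim to the $\W_2$-level, which is exactly what the compatibility of $\W_2$ with finite products guarantees.
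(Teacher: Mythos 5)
Your proposal is correct and rests on the same strategy as the paper's proof: transfer the data from $H$ to $G$ by forming the induced algebra $\Ind_H^G(A)$, apply $(1,1)$-smoothness of $G$, and come back via Shapiro's Lemma, exploiting the fact that $\W_r$ commutes with the finite products implicit in the induction. The genuine difference is which characterization of $(1,1)$-smoothness you feed into this machine. You use Definition \ref{DefiSmooth} directly --- a lifted invertible $\W_2$-module $L_2[c]$ together with a lifted class --- and therefore you need the extra Speiser-type descent step you describe, namely that the invertible $\W_2(A)$-module furnished by the smoothness of $G$ is automatically of induced form and is determined by its component over the trivial coset. The paper instead invokes the reformulation in Definition \ref{DefiSmooth5}: $(1,1)$-smoothness is the surjectivity of $H^1(G,\mathbf B_2(\W_2(A)))\lra H^1(G,\mathbf B_2(A))$. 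This packages the lifted line bundle and the lifted extension into a single nonabelian cocycle, and the identity $\mathbf B_2(\W_2(\Ind_H^G A))=\Ind_H^G(\mathbf B_2(\W_2(A)))$ makes Shapiro's Lemma give the surjectivity for $H$ at once, with no descent of a line bundle at all. So the $\mathbf B_2$-formulation buys exactly the step you have to do by hand.

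One small imprecision worth fixing in your write-up: the naive $G$-action on the output $\mathcal L_2$ of smoothness of $G$ need not factor through the quotient $\bar G=G/G_0$ you fixed at the outset --- only the action on $\W_2(A)$ does. The coset decomposition of $\W_2(A)$ and the permutation of its factors are indeed governed by $\bar G$, but the isomorphisms between factors that encode the semilinear structure on $\mathcal L_2$ depend on $g\in G$, not merely on its image in $\bar G$. The right statement is therefore $\mathcal L_2\simeq\Ind_H^G(L_2[c])$ (at the profinite level, i.e. $\mathrm{Maps}_H(G,\cdot)$), not $\Ind_{\bar H}^{\bar G}(L_2[c])$. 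Since the whole argument is naive anyway, this is only a presentational matter, but it removes the appearance that the proof depends on a $G_0$ chosen before applying smoothness.
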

 
 \begin{dem} By a standard limit argument, we can assume that $H$ is open in $G$. We use Definition \ref{DefiSmooth5}. Let $A$ be an $(\F_p,H)$-algebra. Consider the induced $(\F_p,G)$-algebra \[ \Ind_H^G(A):=\mathrm{Maps}_H(G,A),\] consisting of (left) $H$-equivariant maps $G \to A$, with ring structure induced by that of the target $A$. It is endowed with the natural $G$-action, given by the formula $(g.f)(x):=f(xg)$. We have $$\W_r(\Ind_H^G(A)) = \Ind_H^G(\W_r(A)),$$ since the formation of Witt vectors commutes to finite products. Thus, we have $$\mathbf B_2(\W_{2}(\Ind_H^G(A)))=\Ind_H^G(\mathbf B_2(\W_{2}(A))).$$ Shapiro's Lemma thus yields a natural bijection \[ H
^1(G,\mathbf B_2(\W_{2}(\Ind_H^G(A)))) \simeq H^1(H,\mathbf B_2(\W_{2}(A))),\] which we use to conclude that the arrow of Definition \ref{DefiSmooth5} is surjective for the pair $(H,A)$ iff it is for the pair $(G,\Ind_H^G(A))$.
 \end{dem}

\quad\\ \quad\\
\textsc{Acknowledgements.}
To be filled in.

\bibliographystyle{plain}
\bibliography{biblitex.bib}

\end{document}